\definecolor{webgreen}{rgb}{0,.5,0}
\definecolor{webbrown}{rgb}{.6,0,0}
\newcommand{\Aut}{{\rm Aut}}
\newcommand{\Prism}{{\rm Prism}}
\newcommand{\GA}{{\rm GA}}
\newcommand{\Br}{{\rm Br}}
\newcommand{\MBr}{{\rm MBr}}
\newcommand{\Pet}{{\rm Pet}}
\newcommand{\Dod}{{\rm Dod}}
\newcommand{\Syl}{{\rm Syl}}
\newcommand{\AW}{{\rm AW}}
\newcommand{\Cox}{{\rm Cox}}
\newcommand{\Ico}{{\rm Ico}}
\begin{document}

\theoremstyle{plain}
\newtheorem{theorem}{Theorem}
\newtheorem{corollary}[theorem]{Corollary}
\newtheorem{lemma}[theorem]{Lemma}
\newtheorem{proposition}[theorem]{Proposition}
\newtheorem{obs}[theorem]{Observation}

\theoremstyle{definition}
\newtheorem{definition}[theorem]{Definition}
\newtheorem{example}[theorem]{Example}
\newtheorem{conjecture}[theorem]{Conjecture}

\theoremstyle{definition}
\newtheorem{remark}[theorem]{Remark}
\begin{center}
\vskip 1cm

{\LARGE\bf  Tight factorizations of girth-$g$-regular graphs}
\vskip 1cm
\large
Italo J. Dejter\\
University of Puerto Rico\\
Rio Piedras, PR 00936-8377\\
\href{mailto:italo.dejter@gmail.com}{\tt italo.dejter@gmail.com} \\
\end{center}

\begin{abstract}\noindent Girth-regular graphs with equal girth, regular degree and chromatic index are studied for the determination of 1-factorizations with each 1-factor intersecting every girth cycle. Applications to hamiltonian decomposability and to 3-dimensional geometry
are given.Applications are suggested for priority assignment and optimization problems.
 
\end{abstract}

\section{Introduction}\label{intro}

Let $3\le \kappa\in\mathbb{Z}$. Given a finite graph $\Gamma$ with girth $g(\Gamma)=\kappa$ we inquire whether assigning $\kappa$ colors to the edges of $\Gamma$ {\it properly} (i.e., no two adjacent edges of equal color in $\Gamma$) can be performed so that each $\kappa$-cycle of $\Gamma$ has a bijection from the edges of $\Gamma$ to the $\kappa$ colors.
Such an inquiry is applicable to managerial situations in which a number of agents must participate in committees around round tables, with $\kappa$ stools about each table. A roster of $\kappa$ tasks is handed to each agent at each round table, and such an agent is assigned each of the $\kappa$ tasks but at pairwise different tables. This can be interpreted as a 2-way (vertex incidence versus girth-cycle membership) problem of sorting the $\kappa$ colors according to some prioritization hierarchy. 
Thus, an assignment problem is developed with a range of potential applications in geometry (Section~\ref{3d}), optimization and decision making; see for example \cite{s1,s2,s3,s4,s5,s6}. We pass to formalize our ideas.

Let $\Gamma$ be a finite connected $\kappa$-regular simple graph with {\it chromatic index} $\chi'(\Gamma)=\kappa$.
Let $g=g(\Gamma)$ be the girth of $\Gamma$. We say that $\Gamma$ is a $g${\it -tight graph} if $g=g(\Gamma)=\kappa=\chi'(\Gamma)$.
In each $g$-tight graph $\Gamma$ it makes sense to look for a proper edge-coloring via $\kappa$ colors, each girth cycle colored via a bijection between the cycle edges and the colors they are assigned, precisely $\kappa$ colors. We will say that such a coloring is an {\it edge-girth coloring} of $\Gamma$ and, in such a case, that $\Gamma$ is {\it edge-girth chromatic}, or {\it egc} for short.

We focus on $g$-tight $\kappa$-regular graphs that are {\it girth-regular}, a concept whose definition in \cite{PV} we adapt as follows. Let $\Gamma$ be a graph.
Let  $\{e_1, \ldots , e_\kappa\}$ be the set of edges incident in $\Gamma$ to a vertex $v$. Let $(e_i)$ be the number of $\kappa$-cycles containing an edge $e_i$ for $1\le i\le \kappa$. Assume $(e_1)\ge(e_2)\ge\ldots\ge(e_\kappa)$. Let the {\it signature} of $v$ be the $\kappa$-tuple
$((e_1),(e_2),\ldots,(e_\kappa))$. The graph $\Gamma$ is said to be {\it girth-regular} if all its vertices have a common signature. In such a case, the signature of any vertex of $\Gamma$ is said to be the {\it signature} of $\Gamma$.

In this work, girth-regular graphs that are $g$-tight will be said to be {\it girth-$g$-regular graphs} as well as $((e_1),(e_2),\ldots,(e_g))$-{\it graphs}, or $(e_1)(e_2)\cdots (e_g)$-{\it graphs}, if no confusion arises, where $g=\kappa$.
In this notation, a prefix $$a_1^{(1)}a_1^{(2)}\cdots a_1^{(m_1)}a_2^{(1)}a_2^{(2)}\cdots a_2^{(m_2)}\cdots a_t^{(1)}a_t^{(2)}\cdots a_t^{(m_t)}$$ with $a_i^{(j)}=a_{i'}^{(j')}$ iff $i=i'$ may be abbreviated as $a_1^{m_1}a_2^{m_2}\cdots a_t^{m_t}$ where superscripts equal to 1 may be omitted (e.g., $3221$ abbreviates to $32^21$). So,
the prefixes $(e_1)(e_2)\cdots (e_g)$ will include and further be denoted as follows:

\begin{figure}[htp]
\includegraphics[scale=0.87]{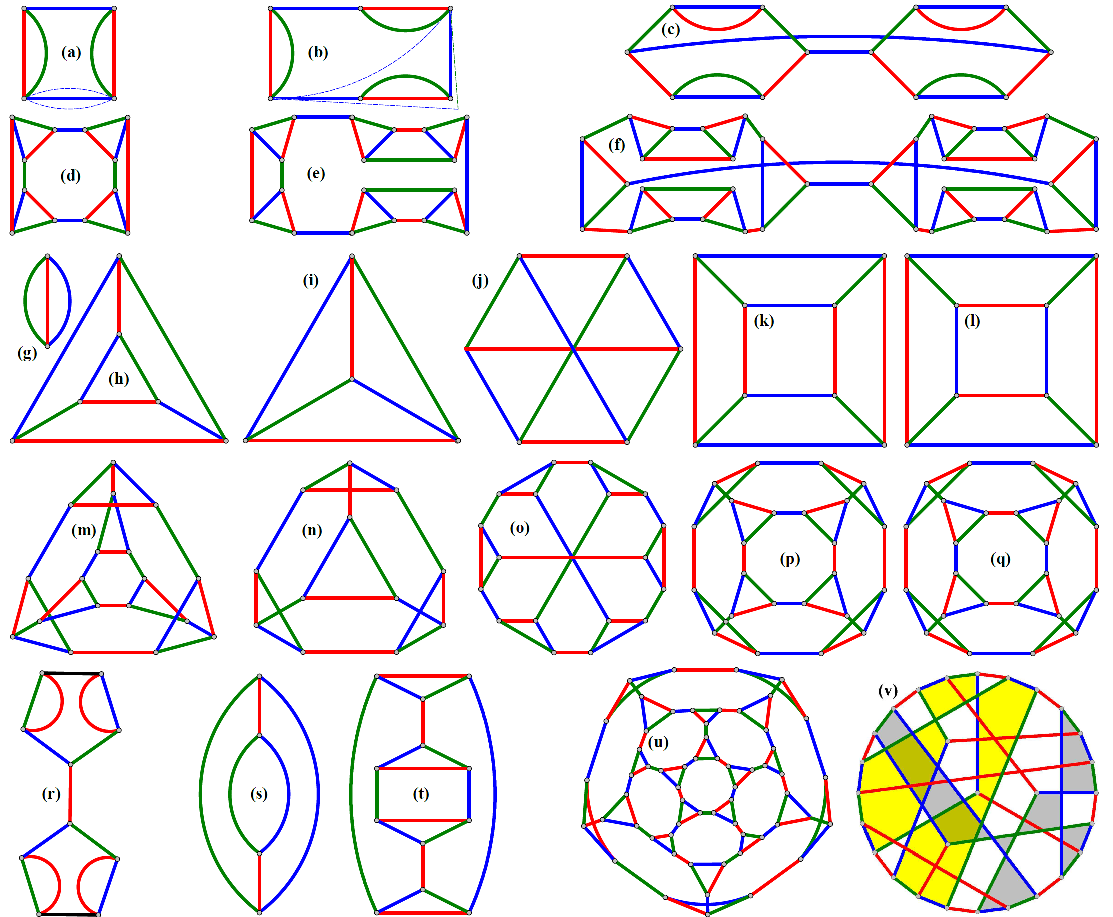}
\caption{Producing egc $(e_1)(e_2)(e_3)$-graphs $\Gamma$ that are triangle-replaced graphs from graphs $\Gamma'$. In particular, pairs $(\Gamma)(\Gamma')$ represented here are: (a)(d), (b)(e), (c)(f), (g)(h), (h)(m), (i)(n), (j)(o), (k)(p), (l)(q) and (s)(t). In additon, item (r) is an example of a generalized snark other than the Petersen graph. Items (u) and (v) are related to the dodecahedral graph and to the Coxeter graph, respectively.}
\label{fig1}
\end{figure}

\begin{enumerate}
\item[\bf{(i)}] $222=2^3$ and $110=1^20$ in Section~\ref{s1}, (Theorems~\ref{2^3} and~\ref{stat});
\item[\bf{(ii)}] $3333=3^4$ $3322=3^22^2$ and $2222=2^4$ in Section~\ref{s2}, (Theorem~\ref{re}, via Lemma~\ref{SC});
\item[\bf{(iii)}] $4443=4^33$ $3221=32^21$ and $3111=31^3$ in Section~\ref{s3}, (Theorem~\ref{prismas});
\item[\bf{(iv)}] $1111=1^4$ in Section~\ref{1111}, (Theorems~\ref{needed} and~\ref{barrels}, via Lemma~\ref{SC}, or a variation of it);
\item[\bf{(v)}] $44400=4^30^2$ $2^30^2$ $8^5$ and $(12)^5$ in Section~\ref{12345}, (Theorems~\ref{arman}, \ref{4^30^2}, \ref{2^30^2} and \ref{10^4}).
\end{enumerate}

\noindent Extending this context, girth-regular graphs $\Gamma$ of regular degree $g$ and girth larger than $g$ will be said to be $0^g$-graphs, or {\it improper} $(e_1)(e_2)\cdots(e_g)$-graphs.

Edge-girth colorings of $(e_1)(e_2)\ldots (e_g)$-graphs are equivalent to 1-factorizations \cite{WDW} such that the cardinality of the intersection of each 1-factor with each girth cycle is 1. These factorizations are said to be {\it tight}, and the resulting colored girth cycles, are said to be {\it tightly colored}.
Note a $\Gamma$ with a tight factorization is egc. Unions of pairs of 1-factors of such graphs are treated in Section~\ref{hd} for their hamiltonian decomposability, (Corollary~\ref{hamil}).
Applications to  M\"obius-strip compounds and hollow-triangle polylinks are found in Section~\ref{3d}.

\section{Egc girth-3-regular graphs}\label{s1}

\begin{theorem}\label{2^3}\cite{PV} There is only one $(e_1)(e_2)(e_3)$-graph $\Gamma$ with $(e_1)(e_2)(e_3)=222=2^3$ namely $\Gamma=K_4$. Moreover, $\Gamma=K_4$ is egc. All other proper $(e_1)(e_2)(e_3)$-graphs are $1^20$-graphs, but not necessarily egc.
\end{theorem}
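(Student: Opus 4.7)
The plan is to exploit the cubic structure heavily. Fix a vertex $v$ with neighbors $a,b,c$; then an edge $vx$ of $\Gamma$ lies in a triangle exactly for each common neighbor of $v$ and $x$, and the only candidates are the two elements of $N(v)\setminus\{x\}$. Hence each signature entry $e_i$ is in $\{0,1,2\}$, and with the ordering $e_1\ge e_2\ge e_3$ only ten candidate triples arise, of which $000$ is the unique improper one.

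First I would dispose of $222$: that each of $va,vb,vc$ lies in two triangles forces each of $a,b,c$ to be adjacent to the other two, so $\{v,a,b,c\}$ spans a $K_4$; since all four vertices then have their cubic degree met, connectivity of $\Gamma$ gives $\Gamma=K_4$. The remaining seven proper signatures are eliminated as follows. If any single entry equals $2$, then both of the other neighbors of $v$ already lie in $N(x)$, which immediately forces $e_2\ge 1$ and $e_3\ge 1$; this rules out $200$, $210$, and $220$. For $221$, the two ``$2$'' entries jointly force every pair among $a,b,c$ to be adjacent, hence $e_3=2$, contradiction. For $211$, the configuration at $v$ is locally consistent, but requiring signature $(2,1,1)$ also at $b$ forces $b$'s third neighbor to contribute an extra adjacency at $a$ (or at $v$), violating cubicity. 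Finally, $111$ would give an adjacency-preserving, fixed-point-free involution on the $3$-element set $\{a,b,c\}$, impossible; and $100$ fails because any ``$1$'' at $va$ automatically creates an adjacency between another neighbor of $v$ and $a$, forcing $e_2\ge 1$. Only $222$ and $110$ survive, as required.

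For the remaining claim, $K_4$ is egc because $\chi'(K_4)=3$, and in any proper $3$-edge-coloring of $K_4$ the three pairwise-adjacent edges of each triangle must carry three distinct colors, giving an edge-girth coloring directly. The only real obstacle is the bookkeeping in the middle case analysis, particularly the $211$ case, where one must inspect a second vertex beyond $v$'s closed neighborhood to extract the contradiction; the other six forbidden signatures fall immediately to the single observation that a ``$2$''-entry both-saturates the local triangle structure, and a ``$1$''-entry propagates by adjacency symmetry.
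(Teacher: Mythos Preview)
Your proof is correct and considerably more explicit than the paper's own, which simply cites item (1) of Theorem~5.1 in \cite{PV} for the signature classification and points to a figure for the claim that $K_4$ is egc. You instead supply a self-contained local argument: bounding each signature entry by $2$ via the observation that the only candidate third vertices of a triangle on $vx$ lie in $N(v)\setminus\{x\}$, and then eliminating seven of the nine nonzero sorted triples by hand. The eliminations are all sound, including the $211$ case, where the third neighbor $b'$ of $b$ would have to be adjacent to both $v$ and $a$ for $b$ to carry a $2$-entry, forcing $b'\in N(v)=\{a,b,c\}$ and hence $b'=c$, contradicting the earlier conclusion $bc\notin E(\Gamma)$. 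Your treatment of $111$ via a fixed-point-free involution on three points is correct though terse; more transparently, $\sum_i e_i$ is twice the number of triangles through $v$, so $111$ would yield $3/2$ triangles. The egc verification for $K_4$ is immediate as you note, since three pairwise adjacent edges in any proper $3$-edge-coloring must use all three colors. What your direct argument buys over the citation is self-containment and a clear view of why no intermediate signature survives; what the paper's approach buys is brevity, deferring the casework to the reference where the full cubic girth-regular classification is carried out.
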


\begin{proof} For the first sentence in the statement, we refer to item (1) of Theorem 5.1 \cite{PV}. To see that $\Gamma=K_4$ is egc, we refer to Fig.~\ref{fig1}(i), below.
\end{proof}

\begin{remark}\label{multi} In order to determine which $1^20$-graphs are egc, let $\Gamma'=(V',E',\phi')$ be a finite undirected loopless cubic multigraph. Let $e\in E'$ with $\phi'(e)=\{u,v\}$ and $u,v\in V'$. Then, $e$ determines two {\it arcs} (that is, ordered pairs of end-vertices of $e$) denoted $(e;u,v)$ and $(e;v,u)$ (if the girth $g(\Gamma')$ of $\Gamma'$ is larger than 2, then $\Gamma'$ is a simple graph, a particular case of multigraph).
The following definition is an adaptation of a case of the definition of generalized truncation in \cite{Eiben}. Let $A'$ denote the set of arcs of $\Gamma'$. A {\it vertex-neighborhood labeling} of $\Gamma'$ is a function $\rho:
A'\rightarrow\{1,2,3\}$ such that for each $u\in V'$ the restriction of $\rho$ to the set $A'(u)=\{(e;u,v)\in A':e\in E'; \phi'(e)=\{u,v\}; v\in V'\}$
of arcs leaving $u$ is a bijection. For our purposes, we require $\rho(e;u,v)=\rho(e;v,u)$ $\forall e\in E'$ with $\phi'(e)=\{u,v\}$ so that each $e\in E'$ is assigned a well-defined color from the color set $\{1,2,3\}$. This yields a 1-factorization of $\Gamma'$ with three 1-factors that we can call $E'_1,E'_2,E'_3$ for respective color 1, 2, 3, with $E'$ being the disjoint union $E'_1\cup E'_2\cup E'_3$. For the sake of examples in Fig.~\ref{fig1}, to be presented below, let colors 1, 2 and 3 be taken as red, blue and green, respectively.\end{remark}

Let $K_3$ be the triangle graph with vertex set $\{v_1, v_2, v_3\}$. The {\it triangle-replaced} graph $\nabla(\Gamma')$ of $\Gamma'$ with respect to 4$\rho$ has vertex set 
$$\{(e_i;u,v_i) : u \in V'; 1\le i\le 3\}$$ and edge set
$$\{(e_i;u,v_i)(e_j;u, v_j)| v_iv_j \in E(K_3),u\in V'\}\cup\{u,v_{\rho(e;u,w)})(w, v_{\rho(e;w,u)})|e\in E';\phi(e)=\{u,w\}\}.$$
Note that $\nabla(\Gamma')$ is a $1^20$-graph.
We will refer to the edges of the form ($e_i;u,v_i)(e_j;u,v_j)$
as {\it $\nabla$-edges} or {\it triangle edges}, and to the edges $(e_i;u,v_i)(e_j;w,v_j)$ $u\neq w$
as {\it $\Gamma'$-edges} or {\it non-triangle edges}. Observe that a $\Gamma'$-edge is incident only to $\nabla$-edges and that each vertex
of $\nabla(\Gamma')$ is incident to precisely one $\Gamma'$-edge. This yields the following.

\begin{obs} \cite{Eiben} Let $\Gamma'$ be a finite undirected cubic multigraph of girth $g$. Then, for any vertex-neighborhood labeling $\rho$ of $\Gamma'$ the shortest cycle in the triangle-replaced graph $\nabla(\Gamma')$ containing a $\Gamma'$-edge is of length at least $2g$.
\end{obs}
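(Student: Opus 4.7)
The plan is to double-count the edges of a putative short cycle and then project it down to $\Gamma'$. Let $C$ be a cycle in $\nabla(\Gamma')$ containing at least one $\Gamma'$-edge, and let $k\ge 1$ denote the number of $\Gamma'$-edges on $C$. The goal is to establish the two inequalities $|C|\ge 2k$ and $k\ge g$, which together yield $|C|\ge 2g$.

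For the first inequality I would invoke the property already recorded in the paragraph just before the statement: each vertex of $\nabla(\Gamma')$ is incident to exactly one $\Gamma'$-edge. Consequently two consecutive $\Gamma'$-edges of $C$ cannot share an endpoint, and so must be separated on $C$ by at least one triangle edge. Cycling once around $C$ produces $k$ such gaps, whence $C$ contains at least $k$ triangle edges in addition to its $k$ $\Gamma'$-edges, giving $|C|\ge 2k$.

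For the second inequality I would project $C$ down to $\Gamma'$ by collapsing, for each $u\in V'$, the triangle of $\nabla(\Gamma')$ at $u$ to the single vertex $u$. Under this projection the $k$ $\Gamma'$-edges of $C$ map to $k$ pairwise distinct edges of $\Gamma'$ --- distinct because distinct $\Gamma'$-edges of $\nabla(\Gamma')$ arise from distinct edges of $\Gamma'$ and $C$, being a cycle, uses no edge twice --- while any two consecutive $\Gamma'$-edges of $C$ share the triangle through which $C$ passes between them, so their projections share the corresponding vertex of $\Gamma'$. The $k$ projected edges therefore concatenate, in cyclic order, into a closed walk of length $k$ in $\Gamma'$ that uses no edge more than once. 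Any such closed walk has length at least $g$, since otherwise it would admit a still shorter sub-cycle, contradicting the girth hypothesis on $\Gamma'$; thus $k\ge g$ and hence $|C|\ge 2k\ge 2g$. The only mildly delicate point I anticipate is the distinctness of the projected edges, which rests on the bijection between $\Gamma'$-edges of $\nabla(\Gamma')$ and edges of $\Gamma'$ built into the second edge set in the definition of $\nabla(\Gamma')$; the rest of the argument is a straightforward combinatorial count.
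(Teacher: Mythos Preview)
Your argument is correct. Note, however, that the paper does not supply its own proof of this observation: it is stated with a citation to \cite{Eiben} and left unproved, so there is nothing in the paper itself to compare your proof against. Your two-step approach---first bounding $|C|\ge 2k$ via the fact that every vertex of $\nabla(\Gamma')$ meets a unique $\Gamma'$-edge, then bounding $k\ge g$ by projecting the $\Gamma'$-edges of $C$ to a closed walk in $\Gamma'$ with no repeated edges---is the natural one and is essentially what one finds in the source reference. The only place one might tighten the exposition is the final sentence: rather than saying the walk ``would admit a still shorter sub-cycle,'' it is cleaner to observe that the edge multiset of a nontrivial closed walk with no repeated edges induces a subgraph in which every vertex has even degree, hence decomposes into edge-disjoint cycles, at least one of which has length at most $k$ and at least $g$.
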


We say that $\Gamma'$ is  a {\it generalized snark} if its chromatic index $\chi'(\Gamma')$ is larger than 3.
Two examples of generalized snark are: {\bf(i)} the Petersen graph and {\bf(ii)} the multigraph obtained by joining two $(2k+1)$-cycles ($k\ge 1$) via an extra-edge (a bridge between the two $(2k+1)$-cycles) and adding $k$ parallel edges to each of the two $(2k+1)$-cycles so that the resulting multigraph is cubic, see Fig.~\ref{fig1}(r) for $k=5$. The triangle-replaced graph $\Gamma'_1=\nabla(\Gamma')$ of a generalized snark $\Gamma'$ will also be said to be a generalized snark. This denomination will also be  used for the triangle-replaced graphs $\Gamma'_{i+1}=\nabla(\Gamma'_i)$ of $\Gamma'_i$ for $i=1,2,\ldots$ etc.
In addition, we will say that $\Gamma'$ is {\it snarkless} if it is not a generalized snark. Clearly, $K_4$ is snarkless.

Vertex-neighborhood labelings $\rho$ for the examples of $\Gamma'$ below, represented in Fig.~\ref{fig1}, have the elements 1, 2 and 3 of $\rho(A')$ interpreted respectively as edge colors red, blue and green. Now, the smallest snarkless multigraphs $\Gamma'\ne K_4$ are:

\begin{enumerate}
\item[{\bf(A)}] the cubic multigraph $\Gamma'_A$ of two vertices and three edges in Fig.~\ref{fig1}(g), with $\nabla(\Gamma'_A)$ being the triangular prism $\Prism(K_3)=K_2\square K_3$ in Fig.~\ref{fig1}(h), where $V(K_2)=\{0,1\}$ and $\square$ stands for the graph cartesian product \cite{Imrich};
\item[{\bf(B)}] the cubic multigraph $\Gamma'_B$ of four vertices resulting as the edge-disjoint union of a 4-cycle and a 2-factor $2K_2$  in Fig.~\ref{fig1}(a), with $\nabla(\Gamma'_B)$ in Fig.~\ref{fig1}(d).\end{enumerate}

Given a snarkless $\Gamma'$ a
new snarkless multigraph $\Gamma"$ is obtained from $\Gamma'$ by replacing any edge $e$ with end-vertices say $u,v$ by the submultigraph resulting as the union of a path $P_4=(u,u',v',v)$ and an extra edge with end-vertices $u',v'$. For example, $\Gamma'$ in Fig.~\ref{fig1}(g) as $\Gamma"$ in Fig.~\ref{fig1}(s) and $\nabla(\Gamma")$ in Fig.~\ref{fig1}(t). Using this replacement of an edge $e$ by the said submultigraph, one can transform the submultigraph $\Gamma'_B$ with the enclosed blue edge $e$ in item (B), above, into a $\Gamma"_B$ as in Fig.~\ref{fig1}(b), with $\nabla(\Gamma"_B)$ in Fig.~\ref{fig1}(e); or with the four red and green edges into a $\Gamma"_B$ as in Fig.~\ref{fig1}(c), with $\nabla(\Gamma"_B)$ in Fig.~\ref{fig1}(f).

The triangle-replaced graphs $\nabla(\Gamma')$ of snarkless $(e_1)(e_2)(e_3)$-graphs $\Gamma'$ either proper or improper, with $(e_1)(e_2)(e_3)\in\{2^3,1^20,0^3\}$ yield  egc $1^20$-graphs, illustrated from:
\begin{enumerate}
\item the graph $\Gamma'$ in Fig.~\ref{fig1}(h), namely $\nabla(\Gamma'_A)$ \hspace*{1.7cm}onto the $1^20$-graph $\Gamma$  in Fig.~\ref{fig1}(m),
\item the graph $\Gamma'$ in Fig.~\ref{fig1}(i), namely $K_4$ \hspace*{2.6cm}onto the $1^20$-graph $\Gamma$  in Fig.~\ref{fig1}(n),
\item the graph $\Gamma'$ in Fig.~\ref{fig1}(j), namely $K_{3,3}$ \hspace*{2.4cm}onto the $1^20$-graph $\Gamma$  in Fig.~\ref{fig1}(o),
\item the graph $\Gamma'$ in Fig.~\ref{fig1}(k), namely the 3-cube graph $Q_3$ onto the  $1^20$-graph $\Gamma$ in Fig.~\ref{fig1}(p). 
\end{enumerate}
This raises the observation that non-equivalent 1-factorizations $F$ and $F'$ of an $(e_1)(e_2)(e_3)$-graph $\Gamma'$ 
like in Fig.~\ref{fig1}(k) and Fig.~\ref{fig1}(l), respectively, for $\Gamma'=Q_3$ 
result in non-equivalent 1-factorizations $\nabla(F)$ and $\nabla(F')$ of $\Gamma=\nabla(\Gamma')$ represented in this case on $\Gamma=\nabla(\Gamma')=\nabla(Q_3)$ in Fig.~\ref{fig1}(p) and Fig~\ref{fig1}(q), respectively. This leads to the final assertion in Theorem~\ref{stat}, below.

Fig.~\ref{fig1}(u) is the egc $1^20$-graph given by $\nabla(\Gamma')$ for the dodecahedral graph $\Dod=\Gamma'$ in which the union of any two edge-disjoint 1-factors of a 1-factorization of $\Gamma'$ yields a Hamilton cycle. 

In contrast, the Coxeter graph $\Cox=\Gamma'$ in Fig.~\ref{fig1}(v) is non-hamiltonian, but the union of any two of its (edge-disjoint) 1-factors is the disjoint union of two 14-cycles, whose apparent interiors are shaded yellow and light gray in the figure. Thus, $\Gamma=\nabla(\Gamma')$ is an egc $1^20$-graph.

\begin{figure}[htp]
\includegraphics[scale=0.88]{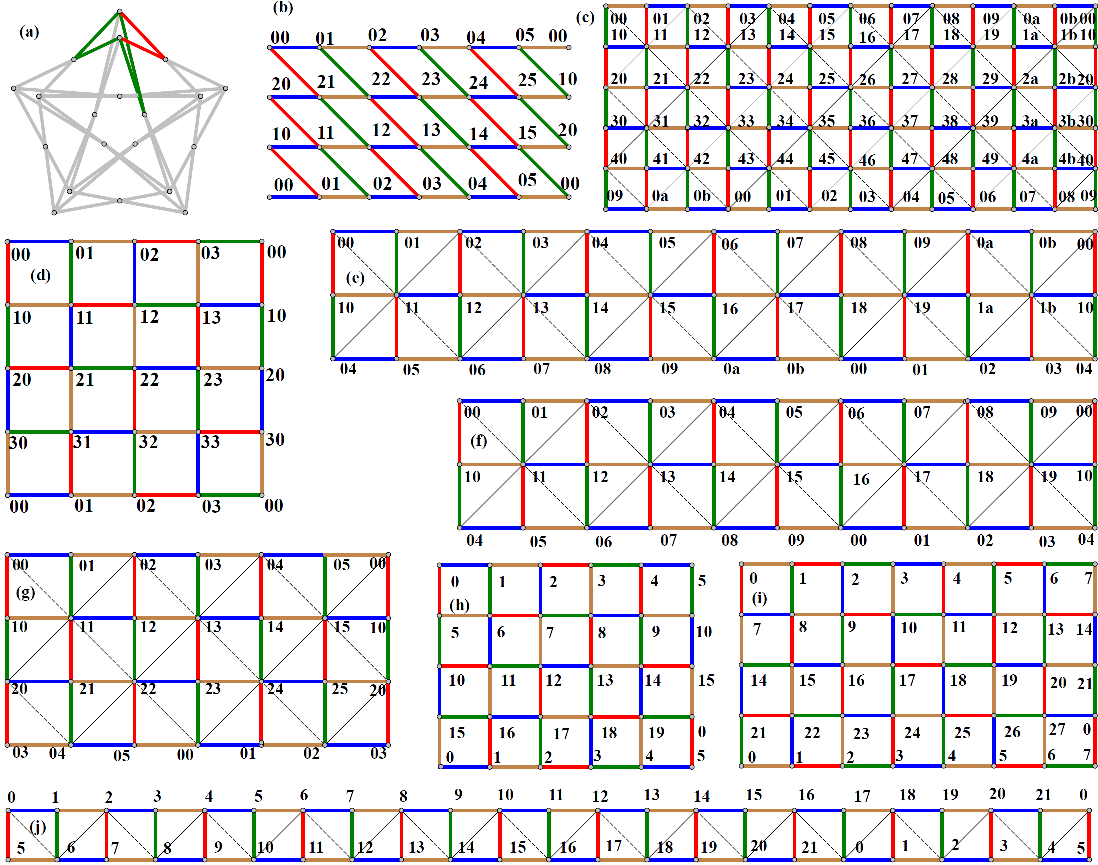}
\caption{Cutouts of $(e_1)(e_2)(e_3)(e_4)$-graphs: (a) is the Folkman graph $\mathbb{D}5$; (b) is for a graph embeddable into the Klein bottle; (c) exemplifies Theorem~\ref{re} {\it 2(e)}. (d) is for the 4-cube in Theorem~\ref{re} {\it 1(c)}; (e) and (f) exemplify Theorem~\ref{re} {\it 3(b)}; (g) exemplifies Theorem~\ref{re} {\it 3(c)}; (h) and (i) exemplify Theorem~\ref{re} {\it 2(b)};
(j) exemplifies Theorem~\ref{re} {\it 3(a)};}
\label{f2}
\end{figure}

\begin{theorem}\label{stat}
A $1^20$-graph $\Gamma$ is egc if and only if $\Gamma$ is the triangle-replaced graph of a snarkless $\Gamma'$.
Moreover, non-equivalent 1-factorizations of  such  $\Gamma'$ result in corresponding non-equivalent 1-factorizations of $\Gamma$.
\end{theorem}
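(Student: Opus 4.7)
The plan is to prove the biconditional by explicit construction in both directions, and then use the canonicity of the triangle decomposition in a $1^20$-graph to handle the ``moreover'' clause. For the ``if'' direction, suppose $\Gamma'$ is snarkless, so $\chi'(\Gamma')=3$. A proper 3-edge-coloring of $\Gamma'$ provides a symmetric vertex-neighborhood labeling $\rho$ as required in Remark~\ref{multi}. Build $\nabla(\Gamma')$ with respect to $\rho$, and color its edges by giving each $\Gamma'$-edge the color it inherits from $\rho$, and each triangle edge $(u,v_i)(u,v_j)$ the unique color in $\{1,2,3\}\setminus\{i,j\}$. A routine local check at each vertex $(u,v_i)$ confirms that the three incident edges carry pairwise distinct colors, so the coloring is proper, and each replaced triangle uses all three colors by construction. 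By the Observation, any cycle of $\nabla(\Gamma')$ containing a $\Gamma'$-edge has length at least $2g(\Gamma')\ge 4$, so the girth cycles of $\nabla(\Gamma')$ are exactly the replaced triangles, and each is tightly colored; hence $\Gamma$ is egc.

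For the ``only if'' direction, let $\Gamma$ be an egc $1^20$-graph with a fixed tight 3-edge-coloring. The signature $(1,1,0)$ forces each vertex of $\Gamma$ to lie in exactly one triangle, and these triangles partition $V(\Gamma)$. Contract each triangle to a vertex to obtain a cubic multigraph $\Gamma'$ whose edges are the non-triangle edges of $\Gamma$. Inheriting colors from $\Gamma$, at each triangle of $\Gamma$ the three triangle edges bear all three colors, which forces the three exiting non-triangle edges (one per triangle vertex) to bear all three colors as well; this is a proper 3-edge-coloring of $\Gamma'$, so $\chi'(\Gamma')=3$ and $\Gamma'$ is snarkless. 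The equality $\nabla(\Gamma')=\Gamma$ then follows from the definition by reading off the required $\rho$ from the tight coloring.

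For the ``moreover'' clause, the correspondence $F\leftrightarrow\nabla(F)$ just constructed is bijective between 1-factorizations of $\Gamma'$ and tight 1-factorizations of $\Gamma$. Since the triangles of $\Gamma$ are intrinsically determined by its signature, every automorphism of $\Gamma$ permutes them and thus descends to an automorphism of $\Gamma'$; consequently, an equivalence between $\nabla(F)$ and $\nabla(F')$ on $\Gamma$ would descend to an equivalence between $F$ and $F'$ on $\Gamma'$, proving the contrapositive. The main technical obstacle is to verify that this descent is well defined independently of the choice of vertex-neighborhood labeling $\rho$ used in constructing $\nabla(\Gamma')$; this hinges on the canonicity of the triangle decomposition in any $1^20$-graph, which is the conceptual core of the whole proof.
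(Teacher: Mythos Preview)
Your proof is correct and follows essentially the same route as the paper: contract the canonical triangle decomposition of a $1^20$-graph to obtain the cubic multigraph $\Gamma'$, and show that a tight $3$-edge-coloring of $\Gamma$ corresponds exactly to a proper $3$-edge-coloring of $\Gamma'$ (with the ``if'' direction lifting a coloring of $\Gamma'$ to $\nabla(\Gamma')$ and the ``only if'' direction descending one). Your treatment of the ``moreover'' clause, via the observation that automorphisms of $\Gamma$ must permute the intrinsically defined triangles and hence descend to $\Gamma'$, is more explicit than the paper's, and is correct.

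One small remark: the ``technical obstacle'' you flag at the end is not actually an obstacle. Because the triangles of a $1^20$-graph are determined by the signature alone, the quotient multigraph $\Gamma'$ is canonically defined independent of any $\rho$; different choices of $\rho$ merely relabel the vertices within each replaced triangle and do not change $\nabla(\Gamma')$ as an abstract graph. So the descent of automorphisms is automatic once you have the triangle partition, and you need not worry about compatibility with $\rho$.
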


\begin{proof}
There are two types of edges in a $1^20$-graph $\Gamma$ namely the {\it triangle edges} (those belonging to some triangle of $\Gamma$) and the remaining {\it non-triangle edges}. Each vertex $v$ of $\Gamma$ is incident to a unique non-triangle edge  $e_v$ and is nonadjacent to a unique edge $\bar{e}_v$ (opposite to $v$) in the sole triangle $T_v$ of $\Gamma$ to which $v$ belongs. In any 1-factorization $F=(F_1,\ldots,F_g)$ of $\Gamma$ both $e_v$ and $\bar{e}_v$ belong to the same factor $F_i$ ($i=1,\ldots,g$). Moreover, each edge $e=\{u,v\}$ of $\Gamma$ (where $e=e_u=e_v$) belongs solely to corresponding triangles $T_u$ and $T_v$ with opposite edges $\bar{e}_u$ and $\bar{e}_v$. Clearly, $\{e=e_u=e_v,\bar{e}_u,\bar{e}_v\}\subseteq F_i$ with equality given precisely when $\Gamma$ is the triangular prism in Fig.~\ref{fig1}(h).

We will define an inverse operator $\nabla^{-1}$ of $\nabla$ that applies to each egc $1^20$-graph $\Gamma$. Given one such $\Gamma$ contracting simultaneously all the triangles $T$ of $\Gamma$ consists in removing the edges of those $T$ and then identifying the vertices $v_1^T,v_2^T,v_3^T$ of each $T$ into a corresponding single vertex $v_T$ where $v_i^T$ for $i\in\{1,2,3\}$ has its unique incident non-triangle edge of $\Gamma$ with color $i$. This is done
so that whenever two triangles $T$ and $T'$ have respective vertices $v_i^T$ and $v_j^{T'}$ adjacent in $\Gamma$ ($i,j\in\{1,2,3\}$), then $i=j$ and the edge $v_i^Tv_i^{T'}$ of $\Gamma$ is removed and replaced by a new edge $v_Tv_{T'}$. The result of these simultaneous triangle contractions is a multigraph $\Gamma'=(V',E',\phi')$ with each $v_T\in V'$ incident to three edges of $E'$ one per each color in $\{1,2,3\}$. The ensuing edge coloring in $\Gamma'$ corresponds to a vertex-neighborhood labeling $\rho:A'\rightarrow\{1,2,3\}$ of $\Gamma'$ from which it follows that $\Gamma$ is the triangle-replaced graph of $\Gamma'$ with respect to $\rho$ that is: $\nabla^{-1}(\Gamma)=\Gamma'$.
This establishes an identification of $\Gamma$ and $\nabla(\Gamma')$ so that
the triangle-edges of $\Gamma$ are the $\nabla$-edges of $\nabla(\Gamma')$ and the non-triangle edges $\Gamma$ are the $\Gamma'$-edges of $\nabla(\Gamma')$. This implies the main assertion of the statement of the theorem.
\end{proof}

\section{Egc girth-4-regular graphs}\label{s2}

\begin{remark}\label{rectangle} In this section and in Section~\ref{s3}, we consider $(e_1)(e_2)(e_3)(e_4)$-graphs $\Gamma$ with
 $(e_1)(e_2)(e_3)(e_4)\ne1111$.
Many such graphs are toroidal and obtained from the square tessellation denoted by its Schl\"{a}fli symbol $\{4,4\}$. Let $T$ be
the group of translations of the plane that preserve such tessellation $\{4,4\}$. Then, $T$ is isomorphic
to $\mathbb{Z}\times\mathbb{Z}$ and acts transitively on the vertices of $\{4,4\}$. If $U$ is a subgroup of
finite index in $T$ then ${\mathcal M}=\{4,4\}/U$ is a
finite map of type $\{4,4\}$ on the torus, and every
such map arises this way (\cite{WP}, Section 6). A symmetry $\alpha$ of $\{4,4\}$ acts as a symmetry of $\mathcal M$ if and only if $\alpha$ normalizes $U$.
Every such $\mathcal M$ has symmetry group $\Aut({\mathcal M})$ transitive
on vertices, horizontal edges and vertical edges. Moreover, for each edge $e$ of $\mathcal M$ there is
a symmetry that reverses $e$. The tessellation $\{4,4\}$ may be considered as a lattice, so it has a {\it fundamental region} \cite{CS,Pal}. Such region will be called a {\it cutout} $\Phi$ and be given by a rectangle $r$ squares wide and $t$ squares high, with the left and right edges identified by parallel translation in order to get a toroidal embedding of $\Gamma$ and the bottom edges identified with the top edges after a shift of $s$ squares to the right, as in Fig. 6 of \cite{WP}.  A toroidal graph with such a cutout will be denoted: 
\begin{eqnarray}\label{eqn1}\{4,4\}_{r,t}^s\end{eqnarray} 
While the aim of \cite{PW,ARS,WP} is the study of edge-transitive graphs, we find 1-factorizations of $g$-tight graphs in graphs $\{4,4\}_{r,t}^s$ of a more ample nature.
Our notation for the vertices of those cutouts will be $(i,j)$ or $ij$ if no confusion arises, where $0\le i<r$ and $0\le j<t$ as in the examples of tight factorizations in 
Fig.~\ref{f2}(d), Fig.~\ref{f2}(e), Fig.~\ref{f2}(f), Fig.~\ref{f2}(g), Fig.~\ref{f2}(h), Fig.~\ref{f2}(i) and Fig.~\ref{f2}(j). In particular for $t>1$ as in Fig.~\ref{f2}(d), Fig.~\ref{f2}(e), Fig.~\ref{f2}(f) and Fig.~\ref{f2}(g),
the notation arises from the fact that $\{4,4\}$ can be considered as the undirected Cayley graph of the direct-sum group $\mathbb{Z}\oplus\mathbb{Z}$ with generator set formed by $(1,0)$ for horizontal left-to-right arcs and $(0,1)$ for vertical up-to-down arcs.
In case $t=1$ (Fig.~\ref{f2}(c), Fig.~\ref{f2}(h), Fig.~\ref{f2}(i) and Fig.~\ref{f2}(j)), we simplify notation by writing $i$ instead of $(i,0)$ or $i0$.
In all of Fig.~\ref{f2}, edge colors are encoded by numbers as follows: 1 for red, 2 for blue, 3 for green and 4 for hazel. (Thin and dashed diagonals of squares are to used in the proof of Theorem~\ref{re}).\end{remark}

\begin{remark}\label{reverse}
If a fundamental region $\Phi$ of $\{4,4\}$ as in Remark~\ref{rectangle} is identified in reverse on a pair $\mathcal P$ of opposite sides, and directly on the other pair, we get a Klein bottle $\mathcal K$ \cite{umkb}. There are egc-graphs that are skeletons of a $\{4,4\}$-tessellation of $\mathcal K$ for example in Fig.~\ref{f2}(b), whose embedding into $\mathcal K$ has corresponding cutout that can be obtained from the one of $\{4,4\}_{r,t}^s$ above (with $(r,t,s)=(6,3,0)$) first by replacing the vertical edges by corresponding square-face diagonals, while keeping the horizontal edges (so the new faces are lozenge rhombi), and second by identifying the horizontal top and bottom borders of the original cutouts, as well as the left and right borders, these with reverse orientations, with the resulting $\mathcal K$-embedding that we will be denoted: 
\begin{eqnarray}\label{eqn2}\lfloor 4,4\rceil_{r,t}^s\end{eqnarray} Then, the example of Fig.~\ref{f2}(b) is in $\lfloor 4,4\rceil_{6,3}^0$.
 If $\Phi$ is identified in reverse on both pairs of opposite sides, a projective-planar graph
is obtained. This can be ruled out because $\{4,4\}$-tessellations only exist on surfaces with Euler characteristic 0.
\end{remark}

\begin{remark}\label{4partes} Let $0<n\in\mathbb{Z}$. The {\it $n$-cube graph} $Q_n$ has as vertices the $n$-tuples with entries in $\mathbb{Z}_2$ and edges only between vertices at unit Hamming distance.
 In Subsection~\ref{Q4}, we consider the 4-cube graph $Q_4=\{4,4\}_{4,4}^0$. Other $(e_1)(e_2)(e_3)(e_4)$-graphs with $(e_1)(e_2)(e_3)(e_4)\ne 1111$ and that are not prisms of $(e_1)(e_2)(e_3)$-graphs are:
 \begin{enumerate}
 \item[{\bf(i)}] the bipartite complement of the Heawood graph, in Subsection~\ref{Hea};
 \item[{\bf(ii)}] the {\it subdivided double} $\mathbb{D}\Gamma$ \cite{PW,WP} of a $4$-regular graph $\Gamma$ is the bipartite graph
with vertex set $(V(\Gamma)\times\mathbb{Z}^2)\cup E(\Gamma)$ and an edge between vertices $(v, i)\in V(\Gamma)\times\mathbb{Z}^2$ and
$e\in E(\Gamma)$ whenever $v$ is incident to $e$ in $\Gamma$;
\cite[Lemma 4.2]{PW} asserts that if $\Gamma$ is 4-regular and arc-transitive, then $\mathbb{D}\Gamma$ is 4-regular and semisymmetric;
for example, the Folkman graph (Fig.~\ref{f2}(a)) is the subdivided double $\mathbb{D}K_5$ of the complete graph $K_5$;
\item[{\bf(iii)}] the {\it circulant graphs}, i.e. the Cayley graphs $C_n(i,j)$ of the cyclic group $\mathbb{Z}_n$ ($n>6$) with generating sets $\{\pm i,\pm j\}$ where $1\le i<j<\frac{n}{2}$ and $\gcd(n,i,j)=1$; most of these are $2^4$-graphs (assuming $n$ even, otherwise chromatic index is not 4), with additional cycles appearing whenever a congruence $\lambda i\pm(4-\lambda)j\equiv 0 \pmod{n}$ holds, (e.g., $C_{14}(2,3)$ is a $2^4$-graph, $C_{12}(2,3)$ is a $3^22^2$-graph, $C_{10}(1,3)$ is a $6^4$-graph and $C_8(1,3)\equiv K_{4,4}$ is a $9^4$-graph); however, such graphs $C_n(i,j)$ can always be seen as toroidal graphs;
\item[{\bf(iv)}] the {\it wreath graphs} $W(n,2)=C_n[\overline{K_2}]$ ($n>4$), i.e. lexicographic products of an $n$-cycle and the complement $\overline{K_2}$ of $K_2$; these are $5^4$-graphs; ($W(4,2)\equiv K_{4,4}$ is a $9^4$-graph).\end{enumerate}
\end{remark}

\begin{remark}\label{obstruct}
If an $(e_1)(e_2)(e_3)(e_4)$-graph $\Gamma$ as in Remark~\ref{4partes} contains a subgraph $\Gamma'$ guaranteeing that $\Gamma$ is not an egc-graph, then $\Gamma'$ is said to be an {\it egc-obstruction}. A subgraph $\Gamma'\equiv K_{2,3}$ is an egc-obstruction for a girth-4-regular graph $\Gamma$ since each of the proper edge-colorings of $\Gamma$ contains a quadrangle of $\Gamma'$ with only two colors. In Fig.~\ref{f2}(a), one such graph $\Gamma$ namely the Folkman graph $\Gamma=\mathbb{D}K_5$ is presented with a subgraph $\Gamma'\equiv K_{2,3}$ formed by a green quadrangle and a red 2-path. $C_{10}(1,3)$ and $W(6,2)$ also have obstruction isomorphic to $K_{2,3}$.\end{remark}

The following lemma is a tool for Theorems~\ref{SC} and~\ref{barrels}, and a variation of it, for Theorem~\ref{needed}.

\begin{lemma}\label{SC}
A sufficient condition for an $(e_1)(e_2)(e_3)(e_4)$-graph $\Gamma$ with $(e_i)<3$ ($i=1,2,3,4$) to be egc is existence of 2-factorization $\{F_1,F_2\}$ of $\Gamma$ such that each 2-factor $F_i$ ($i=1,2$):
\begin{enumerate}\item is the disjoint union of even-length cycles; and \item has an even-length cycle $D(C)$ as in item 1, for each 4-cycle $C$ of $\Gamma$ sharing with $C$ exactly two consecutive edges.\end{enumerate}
\end{lemma}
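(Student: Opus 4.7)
The plan is to turn the given 2-factorization $\{F_1,F_2\}$ into an edge-girth coloring by proper 2-colorings of each even-cycle 2-factor. Specifically, since every component of $F_1$ is an even cycle, I would color $F_1$ by alternating colors $\{1,2\}$ around each such cycle, and similarly color $F_2$ by alternating $\{3,4\}$. Because $F_1$ and $F_2$ partition $E(\Gamma)$ and use disjoint color sets, and at each vertex $v$ the two $F_i$-edges are automatically consecutive in the (even) $F_i$-cycle through $v$, the resulting 4-edge-coloring is proper: every vertex sees exactly one edge of each of the four colors.

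Next I would verify that each girth (i.e., 4-) cycle $C=v_1v_2v_3v_4v_1$ receives all four colors. Using hypothesis 2, I pick an even-length cycle $D(C)$ of, say, $F_1$ sharing exactly two consecutive edges with $C$; without loss of generality these are $v_1v_2$ and $v_2v_3$, meeting at $v_2$. Since $F_1$ is 2-regular, $v_1v_2$ and $v_2v_3$ are the two $F_1$-edges at $v_2$, and they are consecutive in the even cycle $D(C)$, so they receive distinct colors from $\{1,2\}$. I now argue that the other two edges $v_3v_4$ and $v_4v_1$ of $C$ lie in $F_2$: if $v_3v_4\in F_1$ then both $F_1$-edges at $v_3$ would be $v_2v_3$ and $v_3v_4$, so the $F_1$-cycle through $v_3$ (which is $D(C)$) would share three edges of $C$, contradicting the ``exactly two consecutive'' hypothesis; a symmetric argument at $v_1$ gives $v_4v_1\in F_2$. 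Hence $v_3v_4$ and $v_4v_1$ are the two $F_2$-edges at $v_4$, are consecutive in the even $F_2$-cycle through $v_4$, and so receive distinct colors from $\{3,4\}$. Thus $C$ uses all four colors, certifying that $\Gamma$ is egc.

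Two features of the argument deserve emphasis. First, the even length of each cycle in $F_i$ is essential twice over: it enables the alternating 2-coloring in the first place, and it guarantees that two consecutive edges of an $F_i$-cycle receive distinct colors (which is exactly what is needed at the vertices $v_2$ and $v_4$ of $C$). Second, the precise wording ``exactly two consecutive'' in item 2 is doing the main work: it simultaneously rules out $C$ lying entirely inside some $F_i$ (which would yield only two colors on $C$) and rules out three consecutive shared edges (which would repeat a color on $C$), and it forces the complementary two edges of $C$ to sit in the other 2-factor in the configuration I exploit. The hypothesis $(e_i)<3$ plays no direct role in this deduction; it is the standing context in which such a favorable 2-factorization is realistically available and will be invoked in the applications (Theorem~\ref{re}).
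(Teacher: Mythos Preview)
Your proof is correct and follows exactly the same approach as the paper: 2-color each even-cycle 2-factor $F_i$ to obtain a proper 4-edge-coloring, then use hypothesis~2 to see every 4-cycle is tightly colored. The paper compresses this into a single sentence (``A 1-factorization of $F_1$ via colors 1 and 2 and a 1-factorization of $F_2$ via colors 3 and 4 exist and form a tight 1-factorization of $\Gamma$''), whereas you have spelled out the verification in full, including the nice observation that the ``exactly two consecutive'' clause forces the complementary pair of edges of each 4-cycle into the other 2-factor.
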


\begin{proof} A 1-factorization of $F_1$ via colors 1 and 2 and a 1-factorization of $F_2$ via colors 3 and 4 exist and form a tight 1-factorization of $\Gamma$.
\end{proof}

\begin{theorem}\label{re} The following $3^4$-, $3^22^2$- and $2^4$-graphs exist, and are egc or not, as indicated, where notations (\ref{eqn1}) and (\ref{eqn2}) are usedt:
\begin{enumerate}
\item $3^4$-graphs comprising the:
\begin{enumerate} \item bipartite complement of the Heawood graph, which is not egc, (Subsection~\ref{Hea});
\item  4-regular subdivided doubles $\mathbb{D}\Gamma$ of $4$-regular graphs $\Gamma$ which are not egc;
\item 4-cube graph $Q_4=\{4,4\}_{4,4}^0$ which is egc in two different, orthogonally related ways, (Subsections~\ref{Q4}-\ref{s5}, Remark~\ref{not}; an initial example is in Fig.~\ref{f2}(d));
\end{enumerate}
\item $2^23^2$-graphs (assuming $0<t\le r$ and $0\le s<r$), comprising:
\begin{enumerate}
\item $\{4,4\}_{2\ell,4}^0$: egc $\Leftrightarrow \ell\in(3,\infty)\cap 2\mathbb{Z}$; (concatenating copies of $\{4,4\}_{4,4}^0$ in Fig.~\ref{f2}(d));
\item $\{4,4\}_{4s,1}^s$: egc\! $\Leftrightarrow s\in(4,\infty)\cap\mathbb{Z}\setminus 2\mathbb{Z}$; (Fig.~\ref{f2}(h--i), for $r_t^s=20_1^5,28_1^7$);
\end{enumerate}
\item $2^4$-graphs (assuming $0<t\le r$ and $0\le s<r$) comprising:
\begin{enumerate}
\item $\{4,4\}_{r,1}^s$: egc $\Leftrightarrow r\in 2\mathbb{Z}\setminus 4\mathbb{Z}$ $s\in\mathbb{Z}\setminus(2\mathbb{Z}\cup 1)$ and $r\ne 3s+1$; (Fig.~\ref{f2}(j), $r_t^s=22_1^5$);
\item $\{4,4\}_{r,2}^s$: egc $\Leftrightarrow r\in [10,\infty)\cap 2\mathbb{Z}$ and $s\in[4,r-4]\cap 2\mathbb{Z}$;
(Fig.~\ref{f2}(e--f), $\!r_t^s=\!12_2^4,10_2^4$);
\item $\{4,4\}_{r,3}^s$: egc $\Leftrightarrow r\in[6,\infty)\cap 2\mathbb{Z}$ and $s=[3,r-3]\!\setminus\!2\mathbb{Z}$; (Fig.~\ref{f2}(g), $r_t^s=6_3^3$);
\item $\{4,4\}_{r,4}^s$: egc $\Leftrightarrow 4\le r\in 2\mathbb{Z}$ and $0<s\in 2\mathbb{Z}$; (color pattern as in Fig.~\ref{f2}(e--g),(j);
\item $\{4,4\}_{r,t}^s$ $t\in[4,\infty)$: egc $\Leftrightarrow r\in 2\mathbb{Z}$ and $t+s\in 2\mathbb{Z}$; (color pattern as in Fig.~\ref{f2}(c));
\item $\lfloor 4,4\rceil_{r,t}^0$: egc $\Leftrightarrow r\in[6,\infty)\cap 2\mathbb{Z}$ and $t\in[3,\infty)\cap\mathbb{Z}\setminus 2\mathbb{Z}$; (Fig.~\ref{f2}(b), $r_t^s=6_3^0,8_3^0$).
\end{enumerate}
\end{enumerate}\end{theorem}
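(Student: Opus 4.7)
I will address the three items of Theorem~\ref{re} separately. For the sufficiency (``egc'') assertions in items (2) and (3), the common device is Lemma~\ref{SC}: for each cutout $\{4,4\}_{r,t}^s$ (or $\lfloor 4,4\rceil_{r,t}^0$) satisfying the stated divisibility hypotheses, I will exhibit a 2-factorization $\{F_1,F_2\}$ into disjoint even cycles such that every 4-cycle of the graph shares two consecutive edges with one cycle of either $F_1$ or $F_2$. The natural candidates for the 2-factors are the horizontal rows of the cutout together with either the vertical columns (when $s=0$) or staircase cycles whose slope is controlled by $(t,s)$. Once such a 2-factorization is obtained, each of its 2-factors splits into two 1-factors, yielding a tight 1-factorization.

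For part (1), the negative assertion (1a) for the bipartite complement of the Heawood graph is proved in Subsection~\ref{Hea} by a separate case-specific argument; the negative assertion (1b) for the subdivided doubles $\mathbb{D}\Gamma$ of any 4-regular $\Gamma$ is immediate from Remark~\ref{obstruct}, since for every $v\in V(\Gamma)$ the two copies $(v,0),(v,1)$ are joined in $\mathbb{D}\Gamma$ by four length-2 paths through the four edge-vertices incident to $v$, yielding a $K_{2,4}\supset K_{2,3}$. For $Q_4=\{4,4\}_{4,4}^0$ in (1c), I would exhibit the explicit coloring of Fig.~\ref{f2}(d) and verify the tight condition on the $24$ quadrangles of $Q_4$ by inspection; a second, non-equivalent tight factorization is then obtained via the orthogonal construction of Subsection~\ref{s5}, which swaps the roles of two disjoint pairs of coordinate directions.

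For items (2) and (3), sufficiency proceeds in three coordinated steps. Step (a) treats the narrow cutouts: I would use the patterns of Fig.~\ref{f2}(h--j) for $t=1$ and (e--g) for $t=2,3$ as templates realising subcases (2b) and (3a)--(3c), and verify that each pattern is preserved by the translations $(i,j)\mapsto(i+r,j)$ and $(i,j)\mapsto(i+s,j+t)$ exactly when the stated congruences on $(r,s,t)$ hold. Step (b) handles thicker cutouts by stacking: subcases (3d), (3e) and (2a) follow by vertically concatenating copies of the $t=2$ or $t=4$ patterns, with $t+s$ in the required parity class. Step (c) adapts the torus construction to the Klein bottle in (3f), where the reversal on one pair of sides forces the additional requirement that $t$ be odd for the pattern to match itself after the flip.

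\textbf{Main obstacle.} The necessity direction for each subcase of items (2) and (3) is the hard part: one must show that the listed divisibility conditions are also necessary. I expect to combine two arguments: (i) a global parity count of each color's occurrences along every row and column of the cutout, which forces $r$ (and in some subcases $t$) to be even and determines the admissible parities of $s$; and (ii) detection of an unavoidable $K_{2,3}$-obstruction, in the sense of Remark~\ref{obstruct}, whenever the shift $s$ produces short, oddly-closed staircase cycles, as happens for instance when $r=3s+1$ in (3a). The Klein-bottle subcase (3f) requires an additional verification that no tight coloring survives the reversal when $t$ is even. Because of the sheer number of parameter regimes, the write-up will necessarily be modular, treating one subfamily at a time and leaning heavily on the figures for the explicit patterns.
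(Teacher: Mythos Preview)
Your treatment of item~1 matches the paper's: Subsection~\ref{Hea} for 1(a), the $K_{2,3}$-obstruction of Remark~\ref{obstruct} for 1(b), and the explicit constructions of Subsections~\ref{Q4}--\ref{s5} for 1(c).

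For items~2 and~3, however, your invocation of Lemma~\ref{SC} has a genuine gap. You propose as 2-factors the \emph{horizontal rows} together with the \emph{vertical columns} (or staircases). But condition~(2) of Lemma~\ref{SC} requires that each 2-factor share with every 4-cycle $C$ exactly two \emph{consecutive} edges of $C$. A horizontal row meets a grid 4-cycle in its two \emph{opposite} horizontal edges, not two consecutive ones; likewise for columns. With opposite edges, the alternating 2-coloring of the row may assign both of them the \emph{same} color, and tightness fails unless you separately coordinate the colorings of adjacent rows---an argument you do not supply and which Lemma~\ref{SC} does not provide. The paper avoids this by taking \emph{both} 2-factors to be what it calls \emph{1-zigzagging}: cycles that alternate horizontal and non-horizontal edges. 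Such a zigzag meets each grid 4-cycle in a horizontal--vertical corner, i.e.\ two consecutive edges, so Lemma~\ref{SC} applies directly. The parity constraints on $r,s,t$ in items~3(a)--(f) are then exactly the conditions for these zigzags to close up as even cycles on the torus or Klein bottle.

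Your plan for necessity also needs adjustment. A $K_{2,3}$-obstruction cannot account for the excluded $2^4$-cases such as $r=3s+1$ in 3(a): those graphs have signature $2^4$ and contain no $K_{2,3}$. The paper's (admittedly terse) argument is instead that for toroidal and Klein-bottle cutouts the zigzag condition of Lemma~\ref{SC} is also \emph{necessary}, so the non-egc cases are precisely those where the 1-zigzagging cycles fail to have even length or fail to meet each 4-cycle in two consecutive edges. Your parity-count idea is closer to this, but it should be aimed at the zigzag cycles rather than at rows and columns.
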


A cycle of a graph $\Gamma$ as in Remarks~\ref{rectangle}-\ref{reverse} is said to be {\it 1-zigzagging} if it is formed by alternate horizontal and non-horizontal (i.e., all vertical or all $45^\circ$-tilted) edges. A 2-factor
of $\Gamma$ is said to be {\it 1-zigzagging} if its composing cycles are 1-zigzagging. A 2-factorization of $\Gamma$ is said to be {\it 1-zigzagging} if its composing 2-factors are 1-zigzagging.

\begin{proof} We pass to analyze the different items composing the statement of Theorem~\ref{re}.

{\bf Items 1 and 2:}
Item 1($a$) is proved in Subsection~\ref{Hea}.
The graphs of item 1($b$) have egc-obstructions (see Remark~\ref{obstruct}) formed by three edge-disjoint paths of length 2 between two nonadjacent vertices, e.g., $\mathbb{D}K_5$ in Fig.~\ref{f2}(a), with egc-obstruction formed by four green edges and two red edges.
Items 1($c$) and 2($b$)
are proved in Subsection~\ref{s5}, Remark~\ref{not}; (see also Fig.~\ref{f2}(d)).
Item 2($a$) is proved by concatenating copies of $\{4,4\}_{4,4}^0$ as in Fig.~\ref{f2}(d).

{\bf Item 3:}
Lemma~\ref{SC} applies to each $\Gamma$ as in Remarks~\ref{rectangle}-\ref{reverse} via the 1-zigzagging 2-factorization $(12)(34)$ which contains its 2-factors having exactly two consecutive edges in common with each 4-cycle, as shown in Fig.~\ref{f2}(e,f,g,j,b,c).
In items 3($a$--$f$), the non-egc cases indicated via ``$\Leftrightarrow$" include those not satisfying the sufficient condition of Lemma~\ref{SC}, because such condition becomes also necessary for each $\Gamma$ arising from a toroidal or Klein-bottle cutout as in Remarks~\ref{rectangle}--~\ref{reverse}.
Moreover, all the 1-zigzagging cycles in $\Gamma$ have even length and share two consecutive edges with each 4-cycle
 precisely where indicated via ``$\Leftrightarrow$" in items 3($a$--$f$).
 Furthermore, in Fig.~\ref{f2}(e,f,g,j), the thin diagonals separate those pairs of consecutive edges (for the 2-factorization $(12)(34)$), while the dashed ones do the same for the 2-factorization $(14)(23)$.
In addition,
note the exclusion in item 3($a$) of the cases $\{4,4\}_{6,1}^1$ and those for which $r=3s+1$ and in item 3($c$) the case $\{4,4\}_{6,3}^1$.
In item 3($b$), note the lower bound for $r$ due to $\{4,4\}_{8,2}^4$ being a $3^22^2$-graph but not egc.
For item 3($e$), the case $s=0$ is covered in item 2($a$).

For the cases of Klein-bottle graphs in item 3($f$), there are two different color patterns, the first one,  exemplified in Fig.~\ref{f2}(b), valid for $6\le r\in 2\mathbb{Z}$ and the second one further restricted to having $r\in 4\mathbb{Z}$ with more than two colors on each horizontal line.
Note the exclusion of the cases $\lfloor 4,4\rceil_{4,t}^0$ ($3<t\in\mathbb{Z}\setminus 2\mathbb{Z}$), for they are not girth-regular.
\end{proof}

The egc-cases of Theorem~\ref{re} item 3 are exemplified respectively in Fig.~\ref{f2}(e,f,g,j,b,c), characterized by having cycles with blue-hazel horizontal edges and cycles with red-green non-horizontal edges. However, transposing the two colors in 1-zigzagging cycles of 2-factors in 2-factorizations $(12)(34)$ $(13)(24)$ or $(14)(23)$  yield tight factorizations with horizontal cycles colored with more than 2 colors.

\subsection{The 4-cube as a twice-egc girth-4-regular graph}\label{Q4}

Consider the three mutually orthogonal Latin squares of order 4, or MOLS(4) \cite{CD} contained as the second, third and fourth rows in the following compound matrix:

\begin{align}\label{(1)}\begin{array}{c|cccc}
&1&2&4&7\\\hline
0&111&222&333&444\\
3&243&134&421&312\\
5&324&413&142&231\\
6&432&341&214&123
\end{array}\end{align}

\noindent where for us row and column headings will stand for the following 4-tuples:
\begin{align}\label{(2)}\begin{array}{cccccccc}
0\!=\!0000, \!&\!1\!=\!1000, \!&\!2\!=\!0100, \!&\!3\!=\!1100,\!&\! 4\!=\!0010,\!&\!5\!=\!1010,\!&\!6\!=\!0110,\!&\!7\!=\!1110,\\
0'\!=\!0001,\!&\!1'\!=\!1001,\!&\!2'\!=\!0101,\!&\!3'\!=\!1101,\!&\!4'\!=\!0011,\!&\!5'\!=\!1011,\!&\!6'\!=\!0111,\!&\!7'\!=\!1111.
\end{array}\end{align}

\begin{figure}[htp]
\hspace*{1.3cm}
\includegraphics[scale=0.4]{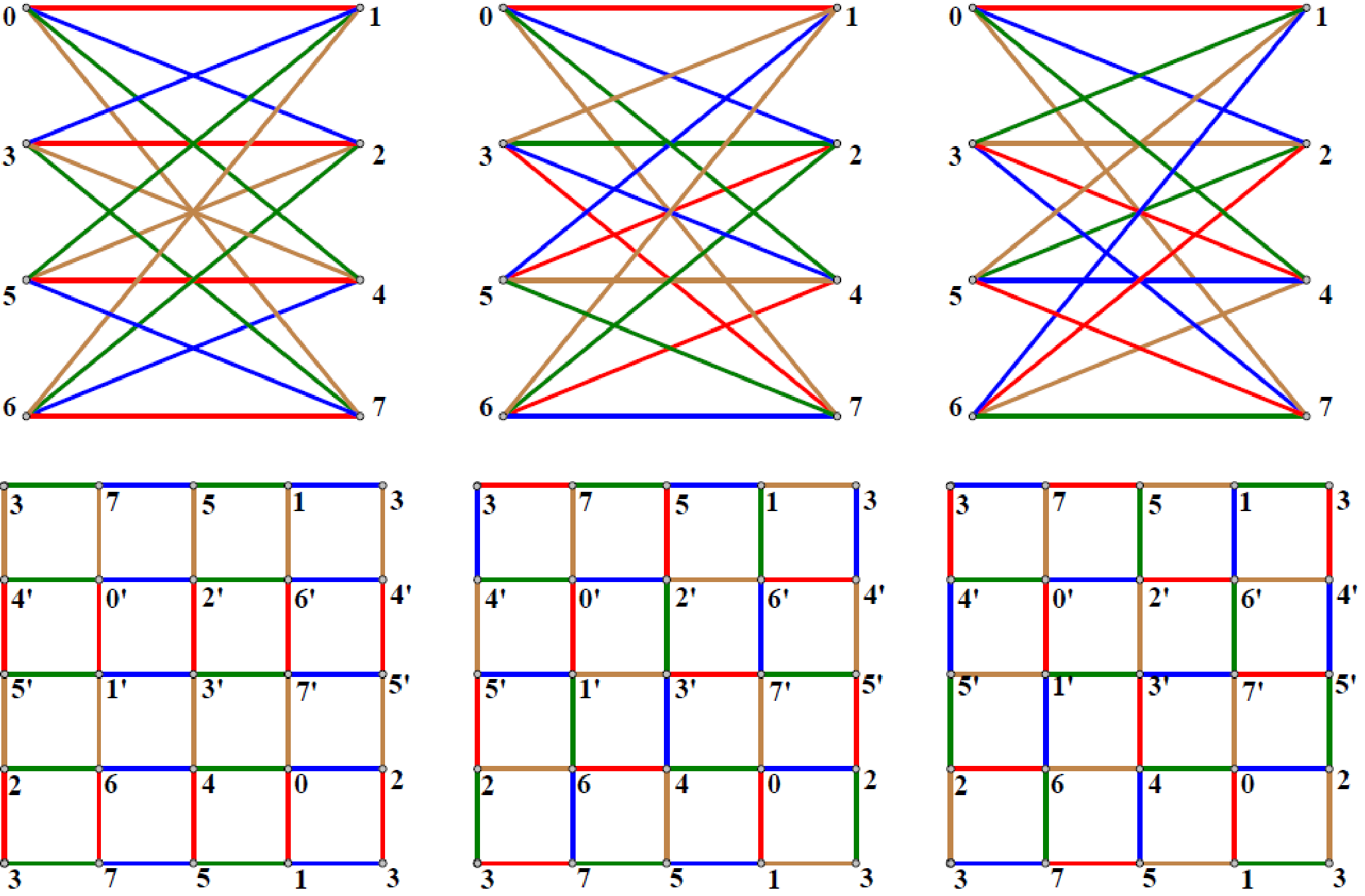}
\caption{On top of the figure there are represented three 1-factorizations of $K_{4,4}$. Below them, corresponding toroidal cutouts of $Q_4$ are drawn, with 0-, 1- and 2-color 4-cycles.}
\label{f3}
\end{figure}

Based on display~(\ref{(1)}), the top of Fig.~\ref{f3} contains three copies of $K_{4,4}$ properly colored in a mutually-orthogonal way, where colors are numbered as established in Remark~\ref{rectangle}.
Letting $\phi:Q_4\rightarrow K_{4,4}$ be the canonical projection map of $Q_4$ seen as a double covering of $K_{4,4}$ obtained by identifying the pairs of antipodal vertices of $Q_4=\{4,4\}_{4,4}^0$ these vertices denoted as in display~(\ref{(2)}), note that
in the bottom of Fig.~\ref{f3} corresponding copies of the colored inverse images $\phi^{-1}(K_{4,4})$ of the three mentioned copies of $K_{4,4}$
 are depicted.
The leftmost copy of $Q_4$ in Fig.~\ref{f3} has color $i$ attributed precisely to those edges parallel to the $i^{th}$ coordinate direction, for $i=1,2,3,4$. This constitutes a 1-factorization $F_0=\{F_0^1,F_0^2,F_0^3,F_0^4\}$ of $Q_4$. On the other hand, the center and rightmost copies of $Q_4$ in the figure determine
 1-factorizations $F_1$ and $F_2$ of $Q_4$ for which each girth cycle of $Q_4$ intersects every composing 1-factor $F_i^j$ of $F_i$ where $i=1,2$ and $j=$ 1 (red), 2 (blue), 3 (green), 4 (hazel).

For each edge $e$ of $Q_4$ we say that $e$ has $i$-{\it color} $j\in\{1,2,3,4\}$ if $e\in F_i^j$. Then, each 4-cycle of $Q_4$ has opposite edges with a common 0-color in $\{1,2,3,4\}$ with a total of two (nonadjacent) 0-colors in $\{1,2,3,4\}$ per 4-cycle, say 0-colors $\ell_1,\ell_2\in\{1,2,3,4\}$ with $\ell_1\ne\ell_2$ so one such 4-cycle can be expressed as $(\ell_1,\ell_2,\ell_1,\ell_2)$. On the other hand, the 4-cycles of $Q_4$ use all four $i$-colors 1,2,3,4, once each, for $i=1,2$.

There are twenty-four 4-cycles in $Q_4$ six of each of the 0-color 4-cycles expressed in the first two columns of the following array, with two complementary 0-color 4-cycles per row. On the other hand, the third and fourth columns here contain respectively the 1-color and 2-color 4-cycles corresponding to the 0-color 4-cycles in the first two columns:

\begin{align}\label{3rdand4th}\begin{array}{||c|c||c||c||}\hline
(1212)&(3434)&(1234)&(1243)\\
(1313)&(2424)&(1324)&(1342)\\
(1414)&(2323)&(1423)&(1432)\\\hline
\end{array}\end{align}

\subsection{Toroidal representation tables}\label{s5}

\begin{remark}\label{not} The triple array in Table~\ref{I} presents $F_i$ ($i=0,1,2$)  in schematic representations of $Q_4=\{4,4\}_{4,4}^0$ where $\circ$ stands for a vertex  of $Q_4$ and $\square$ stands for an $i$-color 4-cycle. This table guarantees Theorem~\ref{re} item 3($c$) via the last two columns of color quadruples in display~(\ref{3rdand4th}), because the four colors are employed on the edges of each 4-cycle:

\begin{table}[htp]
$$\begin{array}{||ccccccccc||ccccccccc||ccccccccc||}\hline\hline
\circ&3\!&\circ&2\!&\circ&3\!&\circ&2\!&\circ&  \circ&1\!&\circ&3\!&\circ&2\!&\circ&4\!&\circ& \circ&2\!&\circ&1\!&\circ&4\!&\circ&3\!&\circ\\
 4\!&\square\!&4\!&\square\!&4\!&\square\!&4\!&\square\!&4\!& 2\!&\square\!&4\!&\square\!&1\!&\square\!&3\!&\square\!&2\!& 1\!&\square\!&4\!&\square\!&3\!&\square\!&2\!&\square\!&1\\
\circ&3\!&\circ&2\!&\circ&3\!&\circ&2\!&\circ& \circ&3\!&\circ&2\!&\circ&4\!&\circ&1\!&\circ& \circ&3\!&\circ&2\!&\circ&1\!&\circ&4\!&\circ\\
 1\!&\square\!&1\!&\square\!&1\!&\square\!&1\!&\square\!&1\!& 4\!&\square\!&1\!&\square\!&3\!&\square\!&2\!&\square\!&4\!& 2\!&\square\!&1\!&\square\!&4\!&\square\!&3\!&\square\!&2\\
 \circ&3\!&\circ&2\!&\circ&3\!&\circ&2\!&\circ& \circ&2\!&\circ&4\!&\circ&1\!&\circ&3\!&\circ& \circ&4\!&\circ&3\!&\circ&2\!&\circ&1\!&\circ\\
 4\!&\square\!&1\!&\square\!&4\!&\square\!&1\!&\square\!&4\!& 1\!&\square\!&3\!&\square\!&2\!&\square\!&4\!&\square\!&1\!& 3\!&\square\!&2\!&\square\!&1\!&\square\!&4\!&\square\!&3\\
\circ \!&3\!&\circ&2\!&\circ&3\!&\circ&2\!&\circ&  \circ&4\!&\circ&1\!&\circ&3\!&\circ&2\!&\circ& \circ&1\!&\circ&4\!&\circ&3\!&\circ&2\!&\circ\\
 1\!&\square\!&1\!&\square\!&1\!&\square\!&1\!&\square\!&1\!& 3\!&\square\!&2\!&\square\!&4\!&\square\!&1\!&\square\!&3\!& 4\!&\square\!&3\!&\square\!&2\!&\square\!&1\!&\square\!&4\\
\circ \!&3\!&\circ&2\!&\circ&3\!&\circ&2\!&\circ&  \circ&1\!&\circ&3\!&\circ&2\!&\circ&4\!&\circ& \circ&2\!&\circ&1\!&\circ&4\!&\circ&3\!&\circ\\\hline\hline
\end{array}$$
\caption{Representations of $F_i$ ($i=0,1,2$) corresponding to the three representations of Fig.~\ref{f3}, where the toroidal cutouts are represented with vertices given with symbols $\circ$ and 4-cycles given with symbols $\square$. The edge colors 1, 2, 3, 4 replace the lines between the vertices $\circ$.}
\label{I}\end{table}

\begin{enumerate}\item[{\bf(a)}] either as horizontal or vertical color quadruples (as in display~\ref{3rdand4th}) alternated with the symbols $\circ$ that represent the vertices of $Q_4$;  \item[{\bf(b)}] or as quadruples around the symbols $\square$ representing the other 4-cycles.
\end{enumerate}

\begin{table}[htp]
$$\begin{array}{||ccccccccccc||ccccccccccccccc||}\hline\hline
_{00}\!\!&2\!\!&_{01}\!\!&4\!\!&_{02}\!\!&1\!\!&_{03}\!\!&3\!\!&_{04}\!\!&2\!\!&_{05}&_{00}\!\!&2\!\!&_{01}\!\!&1\!\!&_{02}\!\!&3\!\!&_{03}\!\!&4\!\!&_{04}\!\!&2\!\!&_{05}\!\!&1\!\!&_{06}\!\!&3\!\!&_{07}\\
1\!\!&\square\!\!&3\!\!&\square\!\!&2\!\!&\square\!\!&4\!\!&\square\!\!&1\!\!&\square\!\!&3&1\!\!&\square\!\!&3\!\!&\square\!\!&4\!\!&\square\!\!&2\!\!&\square\!\!&1\!\!&\square\!\!&3\!\!&\square\!\!&4\!\!&\square\!\!&2\\
_{05}\!\!&4\!\!&_{06}\!\!&1\!\!&_{07}\!\!&3\!\!&_{08}\!\!&2\!\!&_{09}\!\!&4\!\!&_{10}&_{07}\!\!&4\!\!&_{08}\!\!&2\!\!&_{09}\!\!&1\!\!&_{10}\!\!&3\!\!&_{11}\!\!&4\!\!&_{12}\!\!&2\!\!&_{13}\!\!&1\!\!&_{14}\\
3\!\!&\square\!\!&2\!\!&\square\!\!&4\!\!&\square\!\!&1\!\!&\square\!\!&3\!\!&\square\!\!&2&2\!\!&\square\!\!&1\!\!&\square\!\!&3\!\!&\square\!\!&4\!\!&\square\!\!&2\!\!&\square\!\!&1\!\!&\square\!\!&3\!\!&\square\!\!&4\\
_{10}\!\!&1\!\!&_{11}\!\!&3\!\!&_{12}\!\!&2\!\!&_{13}\!\!&4\!\!&_{14}\!\!&1\!\!&_{15}&_{14}\!\!&3\!\!&_{15}\!\!&4\!\!&_{16}\!\!&2\!\!&_{17}\!\!&1\!\!&_{18}\!\!&3\!\!&_{19}\!\! &4\!\!&_{20}\!\!&2\!\!&_{21}\\
2\!\!&\square\!\!&4\!\!&\square\!\!&1\!\!&\square\!\!&3\!\!&\square\!\!&2\!\!&\square\!\!&4&4\!\!&\square\!\!&2\!\!&\square\!\!&1\!\!&\square\!\!&3\!\!&\square\!\!&4\!\!&\square\!\!&2\!\!&\square\!\!&1\!\!&\square\!\!&3\\
_{15}\!\!&3\!\!&_{16}\!\!&2\!\!&_{17}\!\!&4\!\!&_{18}\!\!&1\!\!&_{19}\!\!&3\!\!&_{00}&_{21}\!\!&1\!\!&_{22}\!\!&3\!\!&_{23}\!\!&4\!\!&_{24}\!\!&2\!\!&_{25}\!\!&1\!\!&_{26}\!\!&3\!\!&_{27}\!\!&4\!\!&_{00}\\
4\!\!&\square\!\!&1\!\!&\square\!\!&2\!\!&\square\!\!&3\!\!&\square\!\!&4\!\!&\square\!\!&1&3\!\!&\square\!\!&4\!\!&\square\!\!&2\!\!&\square\!\!&1\!\!&\square\!\!&3\!\!&\square\!\!&4\!\!&\square\!\!&2\!\!&\square\!\!&1\\
_{00}\!\!&2\!\!&_{01}\!\!&4\!\!&_{02}\!\!&1\!\!&_{03}\!\!&3\!\!&_{04}\!\!&2\!\!&_{05}&_{00}\!\!&2\!\!&_{01}\!\!&1\!\!&_{02}\!\!&3\!\!&_{03}\!\!&4\!\!&_{04}\!\!&2\!\!&_{05}\!\!&1\!\!&_{06}\!\!&3\!\!&_{07}\\\hline\hline
\end{array}$$
\caption{In this table, instead of $\circ$ standing for each vertex, we set the vertex notation of $\{4,4\}_{20,1}^5$ in Fig.~\ref{f2}(h) and $\{4,4\}_{28,1}^5$ in Fig.~\ref{f2}(i). Here the four colors are indicated as in Fig.~\ref{f2}(d--j) and Subsection~\ref{Q4}. To distinguish these two cases, note that the 4-cycles of $\{4,4\}_{20,1}^5$ (resp. $\{4,4\}_{28,1}^5$) have the 2-factors by color pairs $\{1,2\}$ and $\{3,4\}$ descending in zigzag from right to left (resp. left to right), by alternate vector displacements $(-1,0)$ (resp. $(1,0)$) for colors 1 and 3, and $(0,-1)$ for colors 2 and 4.}
\label{II}
\end{table}

\noindent By associating the oriented quadruple (1,3,2,4) (resp. (3,4,1,2)) of successive edge colors on the left-to-right and the downward (resp. the right-to-left and the downward) straight paths in $F_1$ (resp. $F_2$), situations that we indicate by ``$\searrow(1,3,2,4)$" (resp. ``$\swarrow(3,4,1,2)$"), a complete invariant for $F_1$ (resp. $F_2$) is obtained that we denote by combining between square brackets the just presented notations: $$[a_{\swarrow 12,34}, b_{\searrow 13,24}, \searrow(1,3,2,4)],\mbox{  (resp. }[a_{\searrow 13,24}, b_{\swarrow 12,34}, \swarrow(3,4,1,2)]).$$ This invariant distinguishes $F_1$ and $F_2$ from each other and is generalized for the toroidal graphs in Theorem~\ref{re}, as we will see below in this subsection.

\noindent Table~\ref{I} is also presented to establish similar patterns, like in Table~\ref{II}, allowing in a likewise manner to guarantee Theorem~\ref{re} item 2($b$). We say that
\begin{enumerate}
\item $F_1$ has $a_{\swarrow 12,34}$-{\it zigzags} if any {\it 1-zigzagging} path obtained by walking left, down, left, down and so on, alternates either colors 1 and 2, or colors 3 and 4;
\item $F_1$ has $b_{\searrow 13,24}$-{\it zigzags} if any {\it 2-zigzagging} path obtained by walking right, right, down, down and so on, alternates either colors 1 and 3, or colors 2 and 4;
\item $F_2$ has $a_{\searrow 13,24}$-{\it zigzags} if any {\it 2-zigzagging} path obtained by walking left, left, down, down and so on, either alternates colors 1 and 2, or colors 3 and 4;
\item $F_2$ has $b_{\swarrow 12,34}$-{\it zigzags} if  any {\it 1-zigzagging} path obtained by walking right, down, right, down and so on, either alternates colors 1 and 3, or colors 2 and 4. \end{enumerate}
\end{remark}

A representation as in Table~\ref{I} may be used for the graphs in items 1($b$) and 2 of Theorem~\ref{re}. For example, the cases $(r,t,s)=(20,1,5)$ and $(r,t,s)=(28,1,5)$ in Fig.~\ref{f2}(h--i) are representable as in Table~\ref{II}, where, instead of $\circ$ standing for each vertex, we set the vertex notation of Fig.~\ref{f2}(h--i). Here the four colors are indicated as in Fig.~\ref{f2}(d--j) and Subsection~\ref{Q4}. To distinguish these two cases in Table~\ref{II}, note that the 4-cycles of $\{4,4\}_{20,1}^5$ (resp. $\{4,4\}_{28,1}^5$) have the 2-factors by color pairs $\{1,2\}$ and $\{3,4\}$ descending in zigzag from right to left (resp. left to right), by alternate vector displacements $(-1,0)$ (resp. $(1,0)$) for colors 1 and 3, and $(0,-1)$ for colors 2 and 4. Generalizing and using the invariant notation of Remark~\ref{not}, we can say that the egc graphs in item 2(b) of Theorem~\ref{re} are as follows:
 \begin{enumerate}\item
 $\{4,4\}_{6x+2,1}^5$ for $x>2$ has invariant $[a_{\swarrow 12,34}, b_{\searrow 13,24}, \searrow(2,4,1,3)]$;
 \item
 $\{4,4\}_{6x+ 4,1}^5$ for $x>2$ has invariant $[a_{\swarrow 13,24}, b_{\searrow 12,34}, \swarrow(2,4,3,1)]$.
 \end{enumerate}

\noindent As in Remark~\ref{not}, we have here a distinguished oriented slanted arrow triple: either $[_\swarrow,_\searrow,\searrow]$ or $[_\searrow,_\swarrow,\swarrow]$. The graphs in item 2 of Theorem~\ref{re} admit both invariants.

\subsection{The bipartite complement of the Heawood graph}\label{Hea}

\begin{figure}[htp]
\includegraphics[scale=0.5]{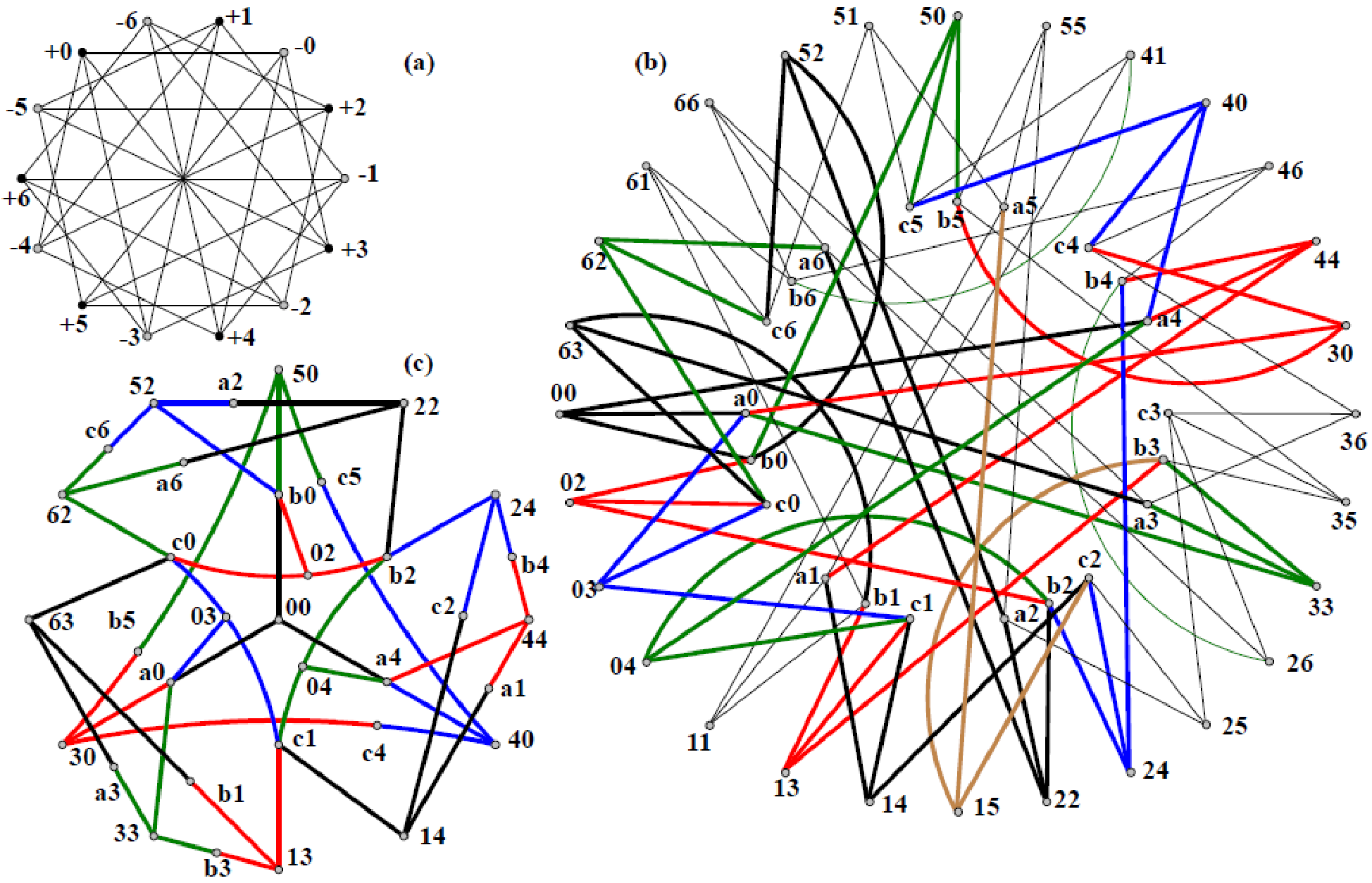}
\caption{The biipartite complement $H$ of the Heawood-graph, with vertex set $V(H)\!=\!\{ij; i\in\{+,-\}$ $j\in\mathbb{Z}_7\}$ is depicted on the upper left, (a), of the figure; its edges $\{+j,-j\}$ $\{+j,-(j+2)\}$ $\{+j,-(j+3)\}$ and $\{+j,-(j+4)\}$ for $j\in\mathbb{Z}_7$ are denoted $jj$ $j(j+2)$ $j(j+3)$ and $j(j+4)$ respectively, where addition is taken$\pmod{7}$. Its associated graph $\GA(H)$ is depicted on the right, (b), of the figure. This forces the coloring of the subgraph of $\GA(H)$ in the lower left, (c), of the figure.}
\label{f4}
\end{figure}

The bipartite complement $H$ of the Heawood-graph, with vertex set $V(H)\!=\!\{ij; i\in\{+,-\}$ $j\in\mathbb{Z}_7\}$ is depicted on the upper left of Fig.~\ref{f4}; its edges $$\{+j,-j\}, \{+j,-(j+2)\}, \{+j,-(j+3)\}\mbox{ and }\{+j,-(j+4)\},$$ for $j\in\mathbb{Z}_7$ will be denoted $$jj, j(j+2), j(j+3)\mbox{ and }j(j+4),\mbox{ respectively,}$$ where addition is taken$\pmod{7}$. This yields the twenty-eight edges of $H$ as arcs from $+$ to $-$ vertices. They form twenty-one 4-cycles $a_i,b_i,c_i$ ($i\in\mathbb{Z}_7$) expressed, by omitting the signs $\pm$ as in Table~\ref{III}.

\begin{table}[htp]
$$\begin{array}{|c|c|c|}\hline
a_0=(00,30,33,03),&b_0=(00,50,52,02),&c_0=(02,62,63,03),\\
a_1=(11,41,44,14),&b_1=(11,61,63,13),&c_1=(13,03,04,14),\\
a_2=(22,52,55,25),&b_2=(22,02,04,24),&c_2=(24,14,15,25),\\
a_3=(33,63,66,36),&b_3=(33,13,15,35),&c_3=(35,25,26,36),\\
a_4=(44,04,00,40),&b_4=(44,24,26,46),&c_4=(46,36,30,40),\\
a_5=(55,15,11,51),&b_5=(55,35,30,50),&c_5=(50,40,41,51),\\
a_6=(66,26,22,62),&b_6=(66,46,41,61),&c_6=(61,51,52,62).\\\hline
\end{array}$$
\caption{The twenty-one 4-cycles $a_i,b_i,c_i$ of $H$ ($i\in\mathbb{Z}_7$).}
\label{III}
\end{table}

\begin{figure}[htp]
\hspace*{3mm}
\includegraphics[scale=01.1]{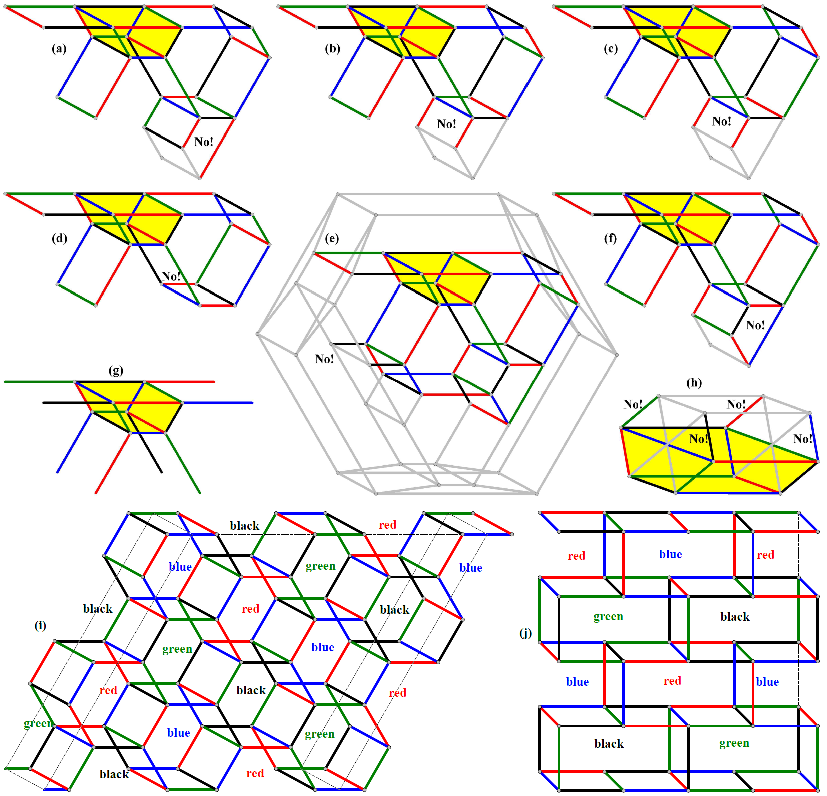}
\caption{Cases of prisms of: truncated octahedron $32^21$ in (a)--(g), $K_{3,3}$ $4^33$ in (h), $ST_4$  $31^3$ in (i) and a 16-vertex graph $31^3$ in (j), where the first two cases are shown not to be egc by arguments presented in the text, and the last two cases are explicitly shown to be egc.}
\label{f5}
\end{figure}

This way,
$$jj=a_j\cap a_{j+4}\cap b_j, j(j+2)=b_j\cap b_{j+2}\cap c_j, j(j+3)=a_j\cap c_j\cap c_{j+1}\mbox{ and }j(j+4)=a_{j+4}\cap b_{j+2}\cap c_{j+1},$$ 
$\forall j\in\mathbb{Z}_7.$
We show there is no proper edge coloring of $H$ that is tight on every 4-cycle. To prove this, we recur to the bipartite graph $\GA(H)$ whose parts $V_1$ and $V_2$ are respectively the
twenty-eight edges and twenty-one 4-cycles of $H$ with adjacency between an edge $ij$ of $H$ and a 4-cycle $C$ of $H$ whenever $C$ passes through $ij$.

$\GA(H)$ is represented in
Fig.~\ref{f4}(b) with $a_i$ written as $ai$ ($i=1,2,3$). A tight factorization of $H$ would be equivalent to a 4-coloring of $\GA(H)$ that is monochromatic on each vertex of $V_1$ but covering the four colors at the edges incident to  each vertex of $V_2$. We begin by coloring the edges incident to vertices $00,02,03,04$ respectively with colors black, red, blue and green. This forces the coloring of the subgraph of $\GA(H)$ in the lower left of Fig.~\ref{f4}. By transferring this coloring to the representation of $\GA(H)$ on the right of Fig.~\ref{f4}, as shown, it is verified that vertex 15 on the bottom of the representation does not admit properly any of the four used colors.

\section{Prisms of types 4443, 3221 and 3221}\label{s3}

\noindent Given a  graph $\Gamma'$ the prism graph $\Prism(\Gamma')$ of $\Gamma'$ is the graph cartesian product $K_2\square\Gamma'$. The cases of $(e_1)(e_2)(e_3)(e_4)$-graphs with $(e_1)(e_2)(e_3)(e_4)$ $\ne 1^4$ apart from those treated in Section~\ref{s2}, are the prisms $\Gamma=Prism(\Gamma')$ of $(e_1)(e_2)(e_3)$-graphs $\Gamma'$.
It is easy to see that there is no egc graph $\Gamma$ if $g(\Gamma')$ is odd.

\begin{figure}[htp]
\includegraphics[scale=0.665]{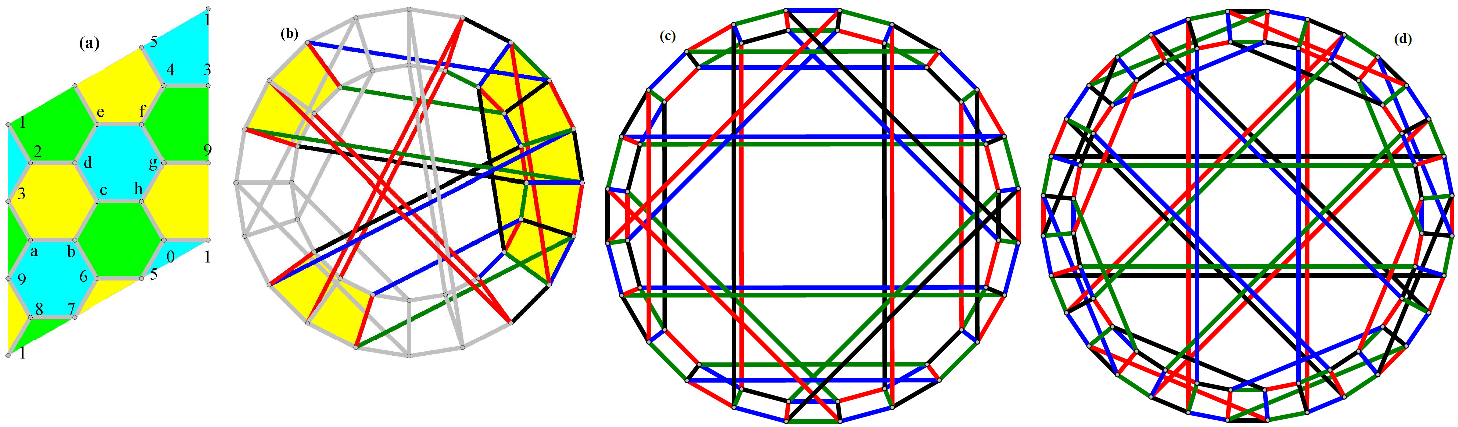}
\caption{A Pappus-graph cutout and the Desargues, Nauru and Dyck graph prisms.}
\label{f6}
\end{figure}

\begin{conjecture}\label{2egcs}
Graphs $\Gamma$ with signatures $32^21$ and $4^33$ are not egc.\end{conjecture}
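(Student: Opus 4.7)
The plan is to view a candidate $\Gamma$ through Section~\ref{s3}'s prism construction---$\Gamma = \Prism(\Gamma') = K_2 \square \Gamma'$ with $\Gamma'$ a suitable 3-regular girth-4 base graph (the truncated octahedron for $32^21$, and $K_{3,3}$ for $4^33$)---and to drive a contradiction from the rainbow-cycle conditions. Suppose a tight 4-edge coloring exists. Each prism 4-cycle $(0,u)(0,v)(1,v)(1,u)$ must be rainbow, so the map $a\colon V(\Gamma') \to \{1,2,3,4\}$ sending $v$ to the color of its prism edge is a proper vertex 4-coloring of $\Gamma'$, and for every edge $uv$ of $\Gamma'$ the two lifts $(i,u)(i,v)$ carry the two colors in $\{1,2,3,4\} \setminus \{a(u), a(v)\}$, one in each copy.

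Next, I would upgrade $a$ to a \emph{perfect} 4-coloring of $\Gamma'$, by which I mean that the three $\Gamma'$-neighbors of every vertex $v$ take exactly the three colors different from $a(v)$. If two neighbors of $v$ happened to share an $a$-color, the forced determination of the copy-1 coloring from the copy-0 coloring would produce a repeated color at $(1,v)$, violating properness; a short case analysis on $a$-values of $N(v)$ yields the claim.

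For the $32^21$ case, $\Gamma'$ is the truncated octahedron, presented as the Cayley graph of $S_4$ with generators $\{(12),(23),(34)\}$. Combining perfectness with the rainbow condition on each square face (an orbit of $H = \langle(12),(34)\rangle$) gives the identity $a(v \cdot (23)) = a(v \cdot h)$ with $h = (12)(34)$, so $a$ is right-invariant under $h \cdot (23) = (1\,2\,4\,3)$, a 4-cycle. Hence $a$ factors through the six left cosets of $\langle (1\,2\,4\,3)\rangle$ in $S_4$. A direct enumeration of the images of the three Cayley generators on coset representatives shows that every pair of distinct cosets is joined by at least one edge of $\Gamma'$, so the coset-quotient of $\Gamma'$ is the complete graph $K_6$; since a proper coloring of $K_6$ needs $6$ colors while $a$ uses at most $4$, no such $a$ exists. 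For the $4^33$ case, the copy of $K_{3,3}$ inside $\Gamma$ contains a $K_{2,3}$-subgraph whose three 4-cycles are girth cycles of $\Gamma$; since $K_{2,3}$ is an egc-obstruction by Remark~\ref{obstruct}, any proper 4-edge coloring leaves one of them non-rainbow.

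The technical crux is the $32^21$ case, where both the identity $a(v(23)) = a(vh)$ and the pairwise-adjacency of the six cosets are finite computations tied to the Cayley structure of the truncated octahedron---straightforward, but requiring careful execution. Once they are in place, the pigeonhole step closes the case. The $4^33$ argument is far softer, riding directly on the $K_{2,3}$-obstruction already recorded in the paper.
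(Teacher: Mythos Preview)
The statement is a \emph{conjecture}, and the paper offers no proof of it; the Example immediately following it only displays exhaustive partial colorings (Fig.~\ref{f5}) of two particular graphs---the prism of the truncated octahedron and the prism of $K_{3,3}$---as supporting evidence, and explicitly adds that the same kind of check ``can be found similarly for example in the $120$-vertex truncated-icosidodecahedral graph.'' So there is no paper-proof to compare your attempt against.

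Your proposal is likewise not a proof of the conjecture. You fix $\Gamma'=$ truncated octahedron for the $32^21$ case and $\Gamma'=K_{3,3}$ for the $4^33$ case, and argue only about those two prisms. The conjecture asserts non-egc-ness for \emph{every} graph carrying those signatures; nothing in your outline (nor in Section~\ref{s3}) establishes that these two prisms exhaust the graphs with signature $32^21$ or $4^33$, and the paper's own mention of the truncated icosidodecahedron shows at least the $32^21$ class is larger. This is a genuine scope gap: you have produced more evidence, not a proof.

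That said, restricted to those two specimens your arguments are sound and more structural than the paper's brute-force search. The $K_{2,3}$-obstruction for $\Prism(K_{3,3})$ is immediate and correct. For $\Prism$(truncated octahedron), your chain---prism rainbow cycles force a proper vertex $4$-coloring $a$ of $\Gamma'$; properness at $(1,v)$ upgrades $a$ to a covering map onto $K_4$; the lifted-square rainbow condition then forces $a(v\cdot(23))=a(v\cdot(12)(34))$, i.e.\ right-invariance under a $4$-cycle; and the six left cosets then cannot be $4$-colored---does go through, though the last step deserves the promised enumeration rather than an assertion. These are nice local arguments, but they remain in the same category as the paper's Example: support for the conjecture, not a proof of it.
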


\begin{example} Conjecture~\ref{2egcs} is sustained by the exhaustive partial colorings of the prisms of the 24-vertex truncated octahedral graph \cite[pp.~79--86]{Crom} in Fig.~\ref{f5}(a--g) and of the 6-vertex Thomsen graph $K_{3,3}$ \cite{Thomsen} in Fig.~\ref{f5}(h), which are respectively a $32^21$-graph and a $4^33$-graph, with the incidental obstructions indicated by a notification "No!" in each case.
Such exhaustive partial colorings can be found similarly for example in the 120-vertex truncated-icosidodecahedral graph \cite[pp.~97--99]{Crom}.\end{example}

\begin{figure}[htp]
\hspace*{40mm}
\includegraphics[scale=0.35]{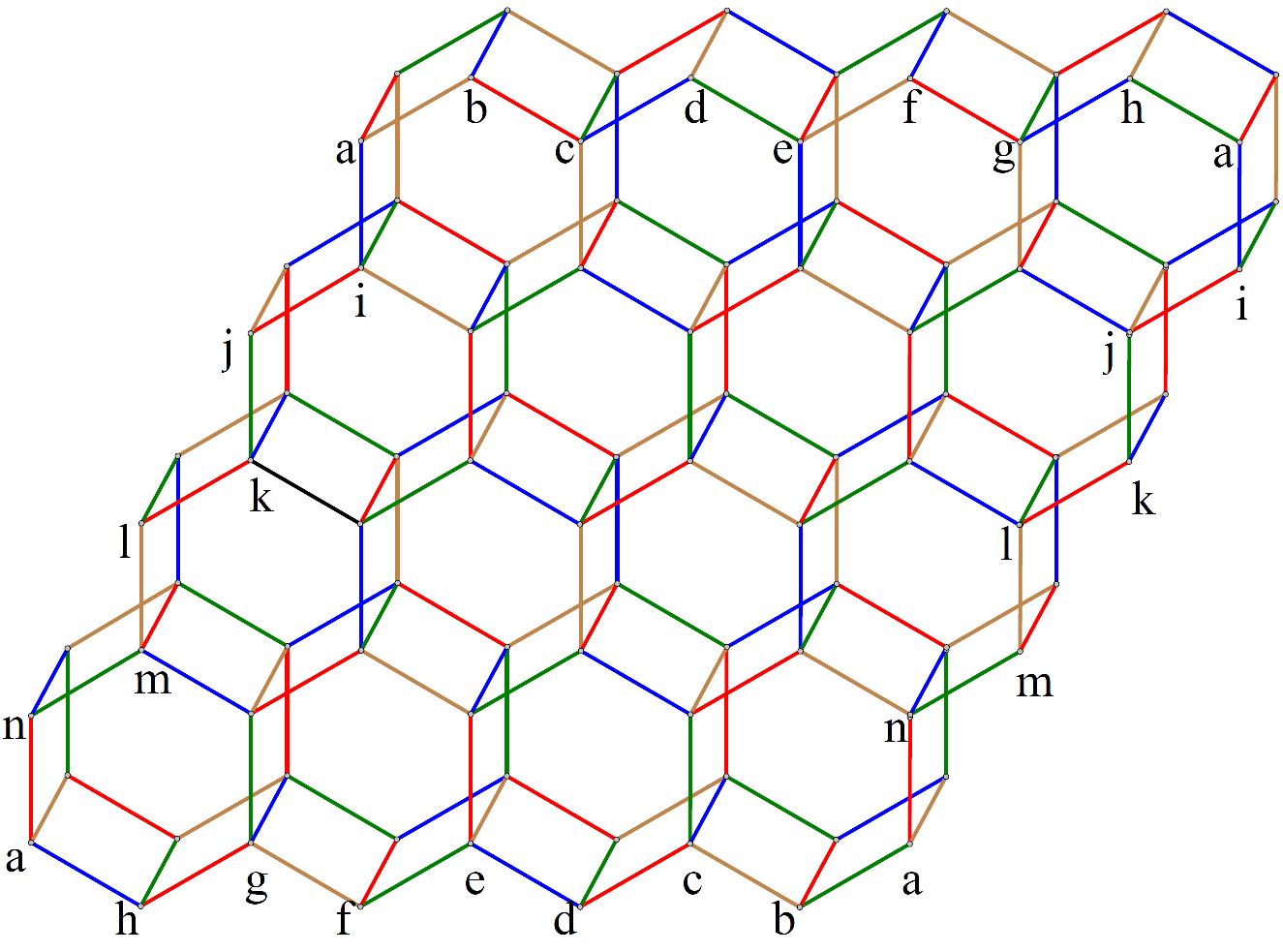}
\caption{An edge-girth coloring of the prism of $\{6,3\}_{|4,4|}$ on the Klein bottle. Note that the
colorings of the hexagonal prisms in which one color appears six times only occur in the first
row, and no edge-girth coloring of this graph is possible if only such colorings are used.}
\label{ff5}
\end{figure}

\begin{remark}\label{31^3} A $31^3$-graph $\Gamma=K_2\square\Gamma'$ where $\Gamma'$ is a toroidal quotient graph of the hexagonal tessellation \cite{CS,Pal} (i.e. the tiling of the plane with Schl\"afli symbol $\{6,3\}$), may be an egc $31^3$-graph. This is exemplified in Fig.~\ref{f5}(i--j),
namely for the prisms of the 24-vertex star graph $ST_4$ (with twelve girth 6-cycles) \cite{DS} and a 16-vertex graph (with just eight girth 6-cycles).
But $\Gamma'$ cannot be the Pappus graph. Let us see why.  A cutout in this case, as in Fig.~\ref{f6}(a) (with octodecimal vertex notation and proper face coloring) contains nine hexagonal tiles.
In order to use the specified coloring to guarantee the existence of an egc-graph, the ``period" employed when moving from any particular vertex $v_R$ of a cutout $R$ in any of the three directions perpendicular to the edges of the tessellation -- i.e., the number of tiles met until a similar vertex $v_{R'}$ in a cutout $R'$ adjacent to $R$ is reached -- must be even, but 9 is odd, leading to a contradiction.\end{remark}

\begin{remark}\label{junction} In the setting of Remark~\ref{31^3}, by considering the junction of three hexagonal prisms $P_1,P_2,P_3$ it is seen that in any such $P_i$  ($i=1,2,3$), two edges not belonging to a hexagon
can only be of the same color if their endpoints are antipodal in the two hexagons of $P_i$. Up to automorphism and permutation of colors, this allows for two distinct colorings of the hexagonal prisms:

\begin{enumerate}\item the one as in both instances of Fig.~\ref{f5}(i--j) (with one color appearing on six edges and the remaining four colors appearing each on four edges), and
\item one with two colors appearing each on five edges and the other two colors appearing each on four edges. \end{enumerate}

\noindent See Fig.~\ref{ff5}, explained in Example~\ref{laconj}, below. Computational evidence has been obtained that gives support to the following conjectures.\end{remark}

\begin{conjecture}\label{att} The condition of even periods in Remark~\ref{31^3} is sufficient for
the case of prisms of hexagonal tessellations of the torus.\end{conjecture}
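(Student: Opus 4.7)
The plan is to recast the existence of an edge-girth coloring of $\Gamma=K_2\square\Gamma'$ as a vertex-coloring problem on $\Gamma'$, and then to construct the desired vertex coloring from the even-period hypothesis of Remark~\ref{31^3}.

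First I would reduce the problem to finding a \emph{rainbow} vertex $4$-coloring $p\colon V(\Gamma')\to\{1,2,3,4\}$, i.e.\ a proper vertex coloring in which every closed neighborhood $N[v]$ receives all four colors. Each 4-cycle of $\Gamma$ has the form $(0,v)(0,w)(1,w)(1,v)$ for a unique edge $vw\in E(\Gamma')$; the two prism edges $p_v,p_w$ and the two horizontal edges $h^0_{vw},h^1_{vw}$ of this cycle must carry the four distinct colors, so $p_v\neq p_w$ and $\{h^0_{vw},h^1_{vw}\}=\{1,2,3,4\}\setminus\{p_v,p_w\}$. Asking that both copies of $\Gamma'$ be properly edge-colored then becomes the existence of two systems of distinct representatives simultaneously at each vertex; the multiset of complementary pairs $C_i=\{1,2,3,4\}\setminus\{p_v,p_{w_i}\}$ at the three neighbors $w_1,w_2,w_3$ of $v$ must cover $\{1,2,3,4\}\setminus\{p_v\}$ uniformly twice, which a short computation shows holds iff the three neighbors of $v$ receive exactly the three colors in $\{1,2,3,4\}\setminus\{p_v\}$, i.e.\ iff $p$ is rainbow at $v$.

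Second I would construct a rainbow 4-coloring on the universal cover. Place the honeycomb $\mathbb{H}$ in standard bipartite coordinates, so that the $A$-vertex $(a,b)\in\mathbb{Z}^2$ is joined to the $B$-vertices $(a,b)$, $(a+1,b)$, $(a,b+1)$. Set $\psi(a,b)=(a\bmod 2)+2(b\bmod 2)\in\mathbb{Z}_2\times\mathbb{Z}_2$, and let $c^B(a,b)=\psi(a,b)$ and $c^A(a,b)=\psi(a,b)+(1,1)$, with addition in $\mathbb{Z}_2\times\mathbb{Z}_2$. The three $B$-neighbors of the $A$-vertex $(a,b)$ have $\psi$-values $\psi$, $\psi+(1,0)$, $\psi+(0,1)$, which together with $c^A(a,b)=\psi+(1,1)$ exhaust $\mathbb{Z}_2\times\mathbb{Z}_2$; the symmetric check at $B$-vertices is identical. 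Hence $c=(c^A,c^B)$ is a proper rainbow 4-coloring of $\mathbb{H}$.

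Third I would show that $c$ descends to $\Gamma'$ under the even-period hypothesis. Write $\Gamma'=\mathbb{H}/\Lambda$, where $\Lambda\subset\mathbb{Z}^2$ is the defining translation sublattice of the torus. Since $\psi$ depends only on the parities of $a$ and $b$, $c$ is $\Lambda$-invariant iff both coordinates of every $\Lambda$-generator are even, i.e.\ iff $\Lambda\subseteq(2\mathbb{Z})^2$. Translating each of the three perpendicular-to-edge strip-lengths of Remark~\ref{31^3} into a linear functional on $\Lambda$-generators (passing, if necessary, to a $60^\circ$-rotated basis so that the three edge directions are on equal footing) identifies the joint evenness of those functionals with the containment $\Lambda\subseteq(2\mathbb{Z})^2$. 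Then $c$ descends to a rainbow 4-coloring of $V(\Gamma')$, and the first-paragraph reduction produces the required edge-girth coloring of $\Gamma$. The main obstacle I anticipate is exactly this translation step: the even-periods condition is symmetric under the three edge directions of $\{6,3\}$, whereas the algebraic containment $\Lambda\subseteq(2\mathbb{Z})^2$ privileges two coordinate directions; in cases where the direct containment fails but all three periods are nonetheless even, $c$ must be replaced by a non-canonical variant obtained from the direction-based 3-edge-coloring of $\mathbb{H}$ by twisting with a $\mathbb{Z}_2\times\mathbb{Z}_2$-valued cocycle supported on certain hexagons, and showing that the resulting cohomological obstruction vanishes exactly under the even-periods hypothesis is where I expect the most care to be needed.
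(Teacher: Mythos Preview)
The paper does not prove this statement: it is explicitly a \emph{conjecture}, introduced with the phrase ``Computational evidence has been obtained that gives support to the following conjectures,'' and no argument is offered beyond the examples in Fig.~\ref{f5}(i--j) and Remark~\ref{31^3}. So there is no ``paper's own proof'' to compare against; you are attempting to settle something the paper leaves open.

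Your reduction in the first paragraph is not the equivalence you claim. You correctly show that an edge-girth coloring forces the prism-edge map $p$ to be a rainbow vertex $4$-coloring of $\Gamma'$, and that rainbowness is exactly what makes the two complementary SDRs exist \emph{locally} at each vertex. But the converse needs more: the binary choices $h^0_{vw}\in\{1,2,3,4\}\setminus\{p_v,p_w\}$ are one variable per \emph{edge}, shared by both endpoints, and the SDR constraints at $v$ and at $w$ must be satisfied simultaneously by the same choice. This is a global list-edge-coloring problem on $\Gamma'$ with lists of size $2$, and its solvability does not follow from the purely local rainbow condition. (For your explicit lattice coloring it can be checked directly---e.g.\ set $h^0$ on the $d_0,d_1,d_2$ edges at $A(a,b)$ to $\psi+(1,0),\ \psi+(0,1),\ \psi$ respectively and verify properness at every $B$-vertex---but you never do this, and the general ``rainbow $\Rightarrow$ egc'' claim is unsupported.)

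The second gap you already flag yourself: the passage from ``all three periods even'' in the sense of Remark~\ref{31^3} to $\Lambda\subseteq(2\mathbb{Z})^2$ in your fixed coordinate basis is not a triviality, and your fallback plan (``twisting with a $\mathbb{Z}_2\times\mathbb{Z}_2$-valued cocycle'' whose obstruction ``vanishes exactly under the even-periods hypothesis'') is a description of what one would like to be true, not an argument. Since the three period functionals on $\Lambda$ satisfy one linear relation, their joint evenness constrains $\Lambda/2\Lambda$ in a way you have not pinned down; until that is made precise, the proposal does not yield a proof of the conjecture.
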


\begin{conjecture}\label{att2}
The prisms of the hexagonal tessellations of the Klein bottle
\cite{umkb} are egc if the cutout
contains $m\times n$ tiles, with $m$ and $n$ even, and having both types of colorings of the hexagonal
prism as in Remark~\ref{junction}.\end{conjecture}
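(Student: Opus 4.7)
The plan is to exhibit an explicit periodic edge-girth colouring of $\Prism(\{6,3\}_{|m,n|})$ assembled from the two prism-colouring types in Remark~\ref{junction}, and to verify that the Klein-bottle identifications are compatible precisely because $m$ and $n$ are even. I would base the construction on the pattern already realised for $\{6,3\}_{|4,4|}$ in Fig.~\ref{ff5}, where the first row of hexagonal prisms uses type-(1) colourings (one colour on six edges) and every other row uses type-(2) colourings (two colours on five edges, two on four), and then extend it to general even $m,n$.

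First I would coordinatise the cutout by labelling the hexagonal tiles $(i,j)$ with $0\le i<m$ and $0\le j<n$, fixing a canonical orientation of each hexagon's boundary edges and its three ``vertical'' (prism) edges. With a fixed palette $\{1,2,3,4\}$, I would define a colour function on the edges of $\Prism(\{6,3\}_{|m,n|})$ row-by-row: for rows $j\ge 1$, apply a type-(2) local colouring to the prism over each $(i,j)$, chosen so that adjacent prisms share the mandatory colour at their common pair of vertical edges (the junction constraint of Remark~\ref{junction}); for row $j=0$, apply the type-(1) local colouring with a cyclic colour shift indexed by $i$ modulo $m$.

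Next I would verify three properties. (a) \emph{Properness}, i.e.\ no two adjacent edges share a colour---this is a local check at each vertex of each prism, built into the definitions of type-(1) and type-(2). (b) \emph{Tightness on girth cycles}, i.e.\ each girth-$4$ rectangle of the form $(u,0)(v,0)(v,1)(u,1)$ for an edge $uv$ of $\{6,3\}_{|m,n|}$ uses all four colours, which follows because each such rectangle lies inside one hexagonal prism and both local colouring types are designed to be tight. (c) \emph{Boundary compatibility} across the two Klein-bottle identifications: direct identification in one direction requires periodicity with even period $m$, and the reflected identification in the other direction requires invariance under a glide reflection composed with a specific colour involution.

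The main obstacle will be step (c), and it is exactly here that the parities of $m$ and $n$ enter. The glide-reflection boundary forces the row-$0$ and row-$(n-1)$ colourings to match after a reflection, which in general mixes the two colour classes used by the type-(2) pattern; deploying the more flexible type-(1) colouring on the exceptional row is what absorbs this mismatch, while the evenness of $n$ ensures that the alternating ``phase'' of the type-(2) rows returns to its starting value before meeting the Klein-bottle seam. Likewise, even $m$ is required so that the cyclic colour shifts in row $0$ close up without conflict. Carrying this out rigorously will require an explicit tabulation of permissible junction-colour patterns along a single hexagonal row and a verification that these patterns admit a fixed point of the induced colour involution when $n$ is even---this is the combinatorial core that the conjecture asserts is always solvable and that an actual proof would have to supply.
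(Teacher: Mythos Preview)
The statement you are attempting to prove is labelled as a \emph{conjecture} in the paper, not a theorem, and the paper offers no proof of it. The only supporting evidence the paper gives is (i) the phrase ``computational evidence has been obtained'' in Remark~\ref{junction} and (ii) the single worked example $\{6,3\}_{|4,4|}$ in Fig.~\ref{ff5}, together with the remark that in that example the type-(1) prism colourings occur only in the first row and that no edge-girth colouring is possible using type-(1) colourings alone. There is therefore no paper-proof to compare your proposal against.

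As for the proposal itself: it is a plausible strategy, and your identification of the girth cycles as the $4$-cycles $(u,0)(v,0)(v,1)(u,1)$ and the recognition that tightness is a local check inside a single hexagonal prism are both correct for a $31^3$-graph of this type. However, you explicitly concede at the end that the ``combinatorial core''---the tabulation of admissible junction patterns along a row and the verification that these admit a fixed point of the glide-reflection-induced colour involution when $n$ is even---has not been carried out. That is precisely the content of the conjecture, so at this stage the proposal is an outline rather than a proof. In particular, the claim that a single row of type-(1) prisms can always ``absorb'' the mismatch at the Klein-bottle seam is asserted but not demonstrated, and the paper's own example in Fig.~\ref{ff5} shows that the interaction between type-(1) and type-(2) rows is subtle enough that a general argument would need to be made explicit. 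Until that step is supplied, the conjecture remains open.
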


\begin{example}\label{laconj} Conjecture~\ref{att2} is sustained by the prism cutout in Fig.~\ref{ff5},
showing an edge-girth coloring of the prism of $\{6,3\}_{|4;4|}$ \cite{umkb} on the Klein bottle. Note that the
colorings of the hexagonal prisms in which one color appears six times only occur in the first
row, and no edge-girth coloring of this graph is possible if only such colorings are used.
\end{example}

\begin{example}\label{umkb} In Fig.~\ref{f6}(b), the prism of the Desargues graph on twenty vertices is shown non-egc via obstructions by pairs of forced ``long" parallel red edges.
However, in Fig.~\ref{f6}(c--d) the Nauru and Dyck graphs on twenty-four and thirty-two vertices, respectively, are shown to have their prisms as egc graphs by means of corresponding tight factorizations.\end{example}

\begin{example} For the case $g(\Gamma')=8$ let us consider $\Gamma'$ to be the Tutte 8-cage on 30 vertices. Fig.~\ref{f7}(a--c) shows why its $31^3$-graph prism is not egc, with five 8-cycle prisms $K_2\square C_8$ in $\Gamma$ (presented cyclically mod 5), each of whose vertices should have its four incident edges colored differently.
In fact, Fig.~\ref{f7}(a--c) presents exhaustively without loss of generality partial edge-colorings in $\Gamma$ with copies of $K_2\square C_8$ edge-colored accordingly
and notification ``No!" if an obstruction to edge-coloring continuation appears.\end{example}

\begin{table}[htp]
$$\begin{array}{l}
\!\hspace*{1.4mm}
\bar{3}\hspace*{4.61mm}\bar{3}\hspace*{4.61mm}
4\hspace*{4.61mm}1\hspace*{4.61mm}2\hspace*{4.61mm}3\hspace*{4.61mm}
\bar{4}\hspace*{4.61mm}\bar{3} \hspace*{4.61mm}
4\hspace*{4.61mm}2\hspace*{4.61mm}1\hspace*{4.61mm}3\hspace*{4.61mm}
\bar{4}\hspace*{4.61mm}\bar{4}\hspace*{4.61mm}
3\hspace*{4.61mm}2\hspace*{4.61mm}1\hspace*{4.61mm}4\hspace*{4.61mm}
\bar{3}\hspace*{4.61mm}\bar{4}\hspace*{4.61mm}
3\hspace*{4.61mm}1\hspace*{4.61mm}2\hspace*{4.61mm}4\\

\!\,\,\circ\, 4\hspace*{0.335mm}\circ\hspace*{0.335mm}1\hspace*{0.335mm}\circ\hspace*{0.335mm}2\hspace*{0.335mm}\circ\hspace*{0.335mm}3\hspace*{0.335mm}\circ\hspace*{0.335mm}4\hspace*{0.335mm}\circ\hspace*{0.335mm}1\hspace*{0.335mm}\circ\hspace*{0.335mm}2\hspace*{0.335mm}\circ\hspace*{0.335mm}1\hspace*{0.335mm}\circ\hspace*{0.335mm}3\hspace*{0.335mm}\circ\hspace*{0.335mm}4\hspace*{0.335mm}\circ\hspace*{0.335mm}2\hspace*{0.335mm}\circ\hspace*{0.335mm}1\hspace*{0.335mm}\circ\hspace*{0.335mm}3\hspace*{0.335mm}\circ\hspace*{0.335mm}2\hspace*{0.335mm}\circ\hspace*{0.335mm}1\hspace*{0.335mm}\circ\hspace*{0.335mm}4\hspace*{0.335mm}\circ\hspace*{0.335mm}3\hspace*{0.335mm}\circ\hspace*{0.335mm}2\hspace*{0.335mm}\circ\hspace*{0.335mm}1\hspace*{0.335mm}\circ\hspace*{0.335mm}2\hspace*{0.335mm}\circ\hspace*{0.335mm}4\hspace*{0.335mm}\circ\hspace*{0.335mm}3\hspace*{0.335mm}\circ\hspace*{0.335mm}1\hspace*{0.335mm}\circ\hspace*{0.335mm}2\\

\!\,1\hspace*{0.7mm}\square\hspace*{0.7mm}2\hspace*{0.7mm}\square\hspace*{0.7mm}3\hspace*{0.7mm}\square\hspace*{0.7mm}4\hspace*{0.7mm}\square\hspace*{0.7mm}1\hspace*{0.7mm}\square\hspace*{0.7mm}2\hspace*{0.7mm}\square\hspace*{0.7mm}3\hspace*{0.7mm}\square\hspace*{0.7mm}4\hspace*{0.7mm}\square\hspace*{0.7mm}2\hspace*{0.7mm}\square\hspace*{0.7mm}1\hspace*{0.7mm}\square\hspace*{0.7mm}3\hspace*{0.7mm}\square\hspace*{0.7mm}4\hspace*{0.7mm}\square\hspace*{0.7mm}2\hspace*{0.7mm}\square\hspace*{0.7mm}1\hspace*{0.7mm}\square\hspace*{0.7mm}4\hspace*{0.7mm}\square\hspace*{0.7mm}3\hspace*{0.7mm}\square\hspace*{0.7mm}2\hspace*{0.7mm}\square\hspace*{0.7mm}1\hspace*{0.7mm}\square\hspace*{0.7mm}4\hspace*{0.7mm}\square\hspace*{0.7mm}3\hspace*{0.7mm}\square\hspace*{0.7mm}1\hspace*{0.7mm}\square\hspace*{0.7mm}2\hspace*{0.7mm}\square\hspace*{0.7mm}4\hspace*{0.7mm}\square\hspace*{0.7mm}3\hspace*{0.7mm}\square\\

\!\,\,\circ\, 3\hspace*{0.335mm}\circ\hspace*{0.335mm}4\hspace*{0.335mm}\circ\hspace*{0.335mm}1\hspace*{0.335mm}\circ\hspace*{0.335mm}2\hspace*{0.335mm}\circ\hspace*{0.335mm}3\hspace*{0.335mm}\circ\hspace*{0.335mm}4\hspace*{0.335mm}\circ\hspace*{0.335mm}1\hspace*{0.335mm}\circ\hspace*{0.335mm}3\hspace*{0.335mm}\circ\hspace*{0.335mm}4\hspace*{0.335mm}\circ\hspace*{0.335mm}2\hspace*{0.335mm}\circ\hspace*{0.335mm}1\hspace*{0.335mm}\circ\hspace*{0.335mm}3\hspace*{0.335mm}\circ\hspace*{0.335mm}4\hspace*{0.335mm}\circ\hspace*{0.335mm}3\hspace*{0.335mm}\circ\hspace*{0.335mm}2\hspace*{0.335mm}\circ\hspace*{0.335mm}1\hspace*{0.335mm}\circ\hspace*{0.335mm}4\hspace*{0.335mm}\circ\hspace*{0.335mm}3\hspace*{0.335mm}\circ\hspace*{0.335mm}2\hspace*{0.335mm}\circ\hspace*{0.335mm}4\hspace*{0.335mm}\circ\hspace*{0.335mm}3\hspace*{0.335mm}\circ\hspace*{0.335mm}1\hspace*{0.335mm}\circ\hspace*{0.335mm}2\hspace*{0.335mm}\circ\hspace*{0.335mm}4\\

\!\hspace*{1.4mm}
\bar{2}\hspace*{4.61mm}\bar{1}\hspace*{4.61mm}
2\hspace*{4.61mm}3\hspace*{4.61mm}4\hspace*{4.61mm}1\hspace*{4.61mm}
\bar{2}\hspace*{4.61mm}\bar{2} \hspace*{4.61mm}
1\hspace*{4.61mm}3\hspace*{4.61mm}4\hspace*{4.61mm}2\hspace*{4.61mm}
\bar{1}\hspace*{4.61mm}\bar{2}\hspace*{4.61mm}
1\hspace*{4.61mm}4\hspace*{4.61mm}3\hspace*{4.61mm}2\hspace*{4.61mm}
\bar{1}\hspace*{4.61mm}\bar{1}\hspace*{4.61mm}
2\hspace*{4.61mm}4\hspace*{4.61mm}3\hspace*{4.61mm}1\\
\end{array}$$
\caption{Representing a coloring of 8-cycle prisms $\Theta_i$ ($i=1,2,3,4$).}
\label{IV}
\end{table}

On the other hand, Table~\ref{IV} uses the notation of Table~\ref{I} in representing a coloring of the union $U$ of almost four (namely $3\frac{3}{4}$) contiguous 8-cycle prisms $\Theta_i$ ($i=1,2,3,4$) and the resulting forced colors for the departing edges away from $U$.
In Table~\ref{IV}, the middle row sequence, call it $\Upsilon$ (obtained by disregarding the symbols ``$\square$", or replacing them by commas) represents the subsequences of colors of the edges $\{(0,u),(1,u)\}$ in the prisms $\Theta_i$
 namely the subsequences $\Upsilon_1=(1,2,3,4)^2$ $\Upsilon_2=(3,4,2,1)^2$ $\Upsilon_3=(2,1,4,3)^2$ and $\Upsilon_4=(4,3,1,2)^2$. Here, the last two terms of each $\Upsilon_i$ coincide (i.e. are shared) with the first two terms of its subsequent $\Upsilon_{i+1}$ where the last 6-term subsequence is completed to $\Upsilon_4$ by adding the first two terms of $\Upsilon_1$ so $\Upsilon_1$ may be considered as the next $\Upsilon_i$ after the last $\Upsilon_4$ (and explaining the fraction $3\frac{3}{4}$ mentioned above). This suggest that $\Upsilon$ can be concatenated with itself a number $\ell$ of times to close a $(24\times\ell)$-cycle of colors for the edges $(\{0,u),(1,u)\}$ of a Hamilton-cycle (of $\Gamma'$) prism $H$ which may be completed to an egc graph $\Gamma$ by means of the following considerations. (An adequately colored graph $\Gamma'$ is obtained from Fig.~\ref{f7}(d) by adding a suitable colored outer cycle, missing in the figure).

On the top and bottom rows of Table~\ref{IV}, the colors 1, 2, 3 and 4 with a bar on top are those of the ``long" edges in the four prisms $\Theta_i$ that close the two 8-cycles in each $\Theta_i$. The remaining (non-barred) colors suggest that the corresponding edges form external 4-cycles that may be joined  with $H$ to form a $\Gamma$ as desired. The ``even longer" edges of these external 4-cycles must be set to form (with two edges of the form $\{(0,u),(1,u)\}$) new 4-cycles and can be selected to form the desired $\Gamma$ by taking the number of concatenated copies of $U$ to be $\ell=4$ so that $|V(\Gamma)|=192$. The two columns in Table~\ref{IV} whose transpose rows are ``$4\circ 3\circ 2$" and ``$4\circ\!1\!\circ\!2$", namely the third leftmost and seventh rightmost $\square$-free columns, integrate one such 4-cycle. The leftmost third, fourth, fifth and sixth columns are paired this way with the rightmost seventh, eighth, ninth and tenth columns, but the last three pairs must be paired with similar columns in the second, third and fourth version of Table~\ref{IV} (for indices $k\in\{2,3,4=\ell\}$ of copies $U_k$ of $U$ if we agree that the leftmost third and rightmost seventh columns above are both for $k=1$ and $U=U_k=U_1$). The same treatment can be set from  the leftmost ninth, tenth, eleventh and twelveth respectively to the rightmost first, second, third and fourth columns, which also correspond in pairs that form again ``long" 4-cycles.

\begin{figure}[htp]
\hspace*{2.6cm}
\includegraphics[scale=0.8]{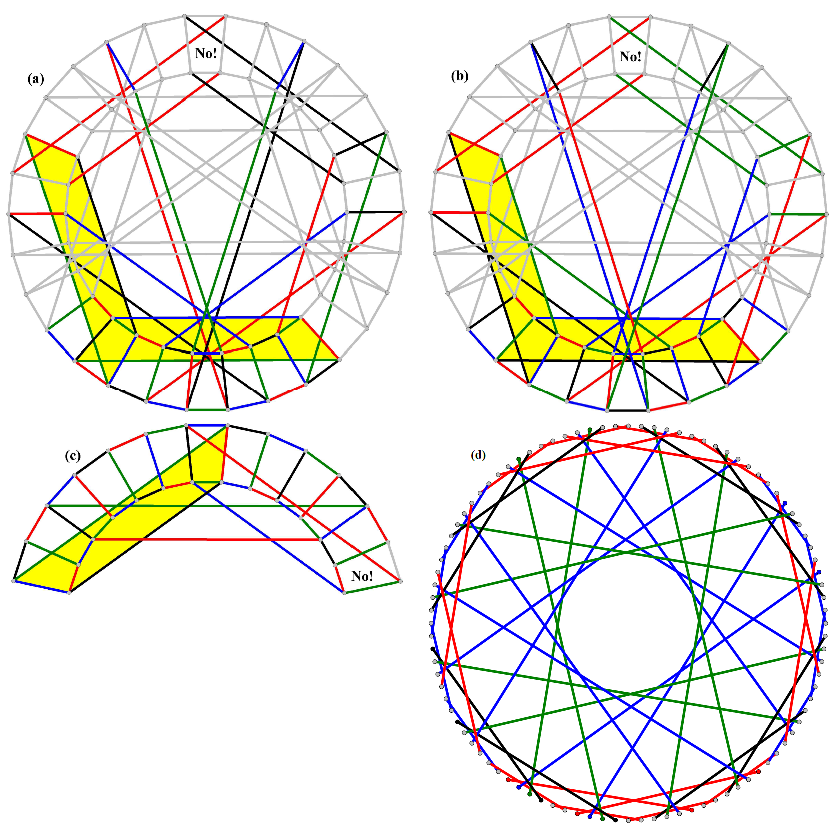}
\caption{Tutte 8-cage 3$1^3$-graph prism in (a), (b) and (c), and 96-vertex cubic $\Gamma'$ in (d), where (a), (b) and (c) show why the $31^3$-graph prism of the Tutte 8-cage is not egc, having five 8-cycle prisms $K_2\square C_8$ in $\Gamma$ (presented cyclically mod 5), each of whose vertices should have its four incident edges colored differently.
In fact, (a), (b) and (c) present exhaustively without loss of generality partial edge-colorings in $\Gamma$ with copies of $K_2\square C_8$ edge-colored accordingly
and notification ``No!" if an obstruction to edge-coloring continuation appears.}
\label{f7}
\end{figure}

\begin{theorem}\label{prismas} For each $4\le k\in\mathbb{Z}$ there is an egc $31^3$-graph $\Gamma$ with $192\times k$ edges as a prism of a hamiltonian cubic graph $\Gamma'$ on $96\times k$ vertices based on $g$ contiguous copies of the edge-colored subgraph in Table~\ref{IV}. However, the Tutte 8-cage is a non-egc $31^3$-graph.
\end{theorem}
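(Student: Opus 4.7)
The plan is to split the theorem into its two independent claims and handle them with the tools already laid out. For the existence half I would elevate the partial coloring of Table~\ref{IV} into a full tight $1$-factorization by cyclically concatenating $k$ copies of the block $U$; for the Tutte $8$-cage non-existence half I would simply invoke the exhaustive case analysis depicted in Figure~\ref{f7}(a--c).

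For the positive part, the first step is to check that the middle-row sequence $\Upsilon=\Upsilon_1\Upsilon_2\Upsilon_3\Upsilon_4$ of Table~\ref{IV} is cyclically consistent: the shared overlap of two terms between consecutive $\Upsilon_i$'s, together with the wrap-around from $\Upsilon_4$ back to $\Upsilon_1$, is read off directly from the four listed sequences $(1,2,3,4)^2$, $(3,4,2,1)^2$, $(2,1,4,3)^2$, $(4,3,1,2)^2$. Concatenating $k$ copies of $U$ cyclically then yields a properly edge-colored Hamilton-cycle prism $H=K_2\square C$ inside the intended $\Gamma=K_2\square\Gamma'$, with $C$ a Hamilton cycle of $\Gamma'$. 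The barred entries in the top and bottom rows of Table~\ref{IV} prescribe the non-Hamilton (``long'') chords of $\Gamma'$, and the heart of the construction is the column-pairing recipe described in the paragraph just before the theorem: the leftmost third, fourth, fifth and sixth columns of one copy of $U$ pair with the rightmost seventh, eighth, ninth and tenth columns, with three of these pairings reaching into the next, the one after, and the one three steps ahead among the $k$ copies. The threshold $k\ge 4$ is exactly what allows all such reach-outs to land in genuinely distinct copies of $U$, so that no vertex collects two edges of the same color and no short cycle collapses.

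Once the closure is verified, I would confirm the three properties that make $\Gamma$ an egc $31^3$-graph: (i) $\Gamma'$ is cubic, connected, and hamiltonian (the last from the Hamilton cycle $C$ used in the construction); (ii) every vertex of $\Gamma$ is incident to exactly one edge of each of the four colors, so the factorization is a proper $1$-factorization; and (iii) every $4$-cycle of $\Gamma$ — the interior rung prism faces inside each $\Theta_i$, the shared rung faces between consecutive $\Theta_i$'s, and the exterior long-chord rung faces produced by the column-pairing — carries all four colors. Properties (i) and (ii) are direct from the structure of $U$ and the pairing recipe; (iii) is the substantive check.

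For the Tutte $8$-cage non-existence claim I would argue directly from Figure~\ref{f7}(a--c): the Tutte $8$-cage decomposes cyclically mod $5$ into five $8$-cycle prisms $K_2\square C_8$ inside $\Gamma$, and any would-be egc $4$-edge-coloring must restrict to each block so that every vertex sees all four colors. The figure enumerates — up to rotation and color permutation — every way of propagating an initial color assignment into the adjacent block, and each branch terminates in an obstruction (``No!''), which suffices. The main obstacle in the whole argument is the bookkeeping for the external $4$-cycles in the positive part: one must verify simultaneously that each closing chord-pair produces a $4$-cycle exhibiting all four colors and that no vertex is ever assigned the same color twice across the different reach-outs. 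After the $24$-column period of $U$ and the translation offsets of the pairing are fixed, this reduces to a finite table-check, but it is where the argument spends most of its energy and where any sharpening of the threshold $k\ge 4$ would have to come from.
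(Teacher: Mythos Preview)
Your proposal is correct and follows essentially the same approach as the paper: the paper's own proof consists of two sentences that defer entirely to the pre-theorem discussion of Table~\ref{IV} and its column-pairing recipe for the constructive half, and to the exhaustive case analysis of Fig.~\ref{f7}(a--c) (given in the Example preceding Table~\ref{IV}) for the Tutte $8$-cage half. Your write-up is in fact more explicit than the paper's, spelling out the three verification steps and correctly identifying the bookkeeping for the external $4$-cycles as the crux; the one minor discrepancy is that the paper takes the base case to be $k=1$ with $\ell=4$ concatenated copies of $U$ (so that general $k$ corresponds to $4k$ copies), whereas you read $k$ directly as the number of copies of $U$---but given the paper's own loose handling of the parameter, your reading is reasonable and the mathematics is unchanged.
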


\begin{proof}
The argument above the statement can be completed for the case $k=1$. By concatenating the graph from Table~\ref{IV} any multiple of $g$ times, one extends the construction.
\end{proof}

\begin{remark}\label{PSV} The cubic vertex-transitive graphs on less than one hundred vertices with girth 10 and that have egc prisms are in the notation of \cite{census}:

 CubicVT[80,30], CubicVT[96,34], CubicVT[96,49], CubicVT[96,50] and CubicVT[96,62].\end{remark}

\section{Egc 1111-graphs}\label{1111}

A construction \cite{WP} of $1^4$-graphs, also called {\it girth-tight} \cite{PW}, proceeds as follows.
Let $\Gamma$ be 4-regular and let $\mathcal C$ be a partition of $E(\Gamma)$ into cycles. The pair  $(\Gamma,{\mathcal C})$ is a {\it cycle decomposition} of $\Gamma$. Two edges of $\Gamma$ are {\it opposite} at vertex $v$ if both are incident to $v$ and belong to the same element of $\mathcal C$. The {\it partial line graph} $\mathbb{P}(\Gamma,{\mathcal C})$ of $(\Gamma,{\mathcal C})$ is the graph  with the edges of $\Gamma$ as vertices, and any two such vertices adjacent if they share, as edges, a vertex of $\Gamma$ and are not opposite at that vertex.
A cycle $C$ in $\Gamma$ is $\mathcal C${\it -alternating} if no two consecutive edges of $C$ belong to the same element of $\mathcal C$. Lemma 4.10 \cite{PW} says that $\mathbb{P}(\Gamma,{\mathcal C})$ is girth-tight if and only if $(\Gamma,{\mathcal C})$
contains neither $\mathcal C$-alternating cycles nor triangles, except those contained in $\mathcal C$.

\begin{figure}[htp]
\includegraphics[scale=0.67]{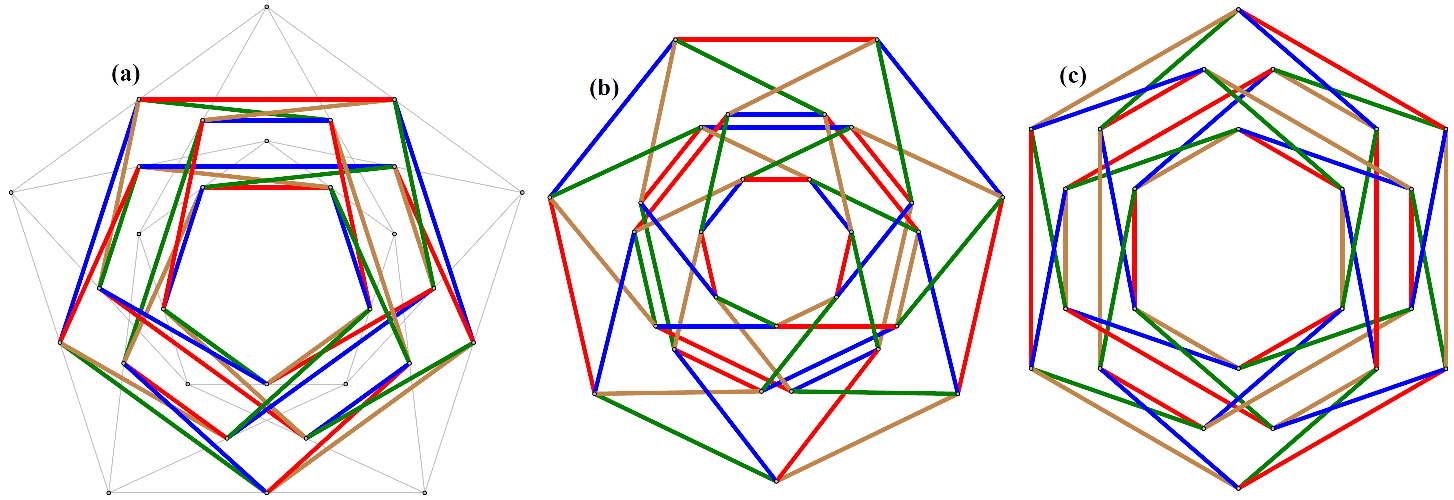}
\caption{Tight factorizations of $\mathbb{P}(W(5,2))$ $\mathbb{P}(W(7,2))$  and $\mathbb{P}(W(6,2))$.}
\label{f8}
\end{figure}

\begin{example}\label{initial}
As initial example of a partial line graph, consider the wreath graph $W(n,2)=C_n[\overline{K_2}]$ ($n>4$), where $C_n$ is a cycle $(v_0,v_1,\ldots,v_{n-1})$. Consider the partition $\mathcal C$ of $W(n,2)$ into the 4-cycles $((v_i,0),(v_{i+1},0),(v_i,1),(v_{i+1},1))$ ($i\in\mathbb{Z}_n$). These form a decomposition $(W(n,2),{\mathcal C})$ which yields the partial line graph $\mathbb{P}(W(n,2),{\mathcal C})$. We prove now that for all values of $n$ $\mathbb{P}(W(n,2),{\mathcal C})$ is egc, as in Fig.~\ref{f8}(a--c), where $\mathbb{P}(W(5,2),{\mathcal C})$ $\mathbb{P}(W(7,2),{\mathcal C})$ and $\mathbb{P}(W(6,2),{\mathcal C})$ are represented, showing tight factorizations via edge colors 1, 2, 3, 4.
\end{example}

\begin{theorem}\label{needed} Let $4<n\in\mathbb{Z}$. Then, $\mathbb{P}(W(n,2),{\mathcal C})$ is egc.
\end{theorem}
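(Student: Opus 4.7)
The plan is to exhibit, for every $n>4$, an explicit tight $1$-factorization of $\mathbb{P}:=\mathbb{P}(W(n,2),\mathcal{C})$. First I would label the $4n$ vertices of $\mathbb{P}$ as $e_i^j$ for $i\in\mathbb{Z}_n$, $j\in\{1,2,3,4\}$, where $e_i^j$ is the $j$-th edge, in the cyclic order $(v_i^0,v_{i+1}^0,v_i^1,v_{i+1}^1)$, of the 4-cycle $C_i\in\mathcal{C}$. A direct check of the adjacency rule for $\mathbb{P}$ (two edges of $W(n,2)$ are adjacent iff they share a vertex and are not opposite there in $\mathcal{C}$) shows that each vertex $e_i^j$ has exactly four neighbors, distributed two-at-$v_i^{?}$ and two-at-$v_{i+1}^{?}$, and that the $2n$ girth 4-cycles of $\mathbb{P}$ are precisely
\[
D_i:=(e_i^1,e_{i+1}^1,e_i^2,e_{i+1}^4)\quad\text{and}\quad D_i':=(e_i^3,e_{i+1}^3,e_i^4,e_{i+1}^2),\qquad i\in\mathbb{Z}_n,
\]
each edge of $\mathbb{P}$ lying in exactly one such cycle, confirming that $\mathbb{P}$ is indeed girth-tight.

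The core of the proof is then the construction of a 2-factorization $\{F_1,F_2\}$ satisfying the hypothesis of Lemma~\ref{SC}. For each $D_i$ there are four 2-paths available; I would classify them as ``type-$A$'' (the two edges incident to $e_i^1$ or to $e_i^2$) or ``type-$B$'' (the two edges incident to $e_{i+1}^1$ or to $e_{i+1}^4$), and similarly for $D_i'$. A short case analysis at each vertex $e_i^j$, using the role of $e_i^j$ (middle/endpoint/missed) in each of its two incident girth cycles, shows that $F_1$ is 2-regular iff the types of $D_{i-1},D_i$ (and of $D_{i-1}',D_i'$) strictly alternate between $A$ and $B$ around $\mathbb{Z}_n$, with matched choices in the primed and unprimed cycles. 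For even $n$ such a cyclic alternation exists; tracing the resulting cycles of $F_1$ yields a disjoint union of even cycles (of length $n$ or $2n$), and the complementary $F_2$ has the same shape, so Lemma~\ref{SC} applies and produces a tight factorization.

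For odd $n$ the strict alternation breaks cyclically, which is exactly the situation calling for the ``variation'' of Lemma~\ref{SC} mentioned before its statement. I would introduce a single local ``defect'' at one pair of consecutive indices, swapping the 2-path chosen in $D_i$ and in $D_i'$ there so as to restore degree $2$ at every vertex, at the cost of coalescing two would-be cycles of $F_1$ into one longer cycle. The key verification is that (i) each girth 4-cycle of $\mathbb{P}$ still meets $F_1$ in two consecutive edges (the defect only modifies which 2-path is picked, not the fact that it is a 2-path), and (ii) the coalesced cycle has even length, which follows from a parity count along the defect: the two merged cycles had odd length summing to an even number, and the swap preserves this sum. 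An analogous assertion holds for $F_2$. This finishes the application of the variation of Lemma~\ref{SC} and yields the desired tight factorization, consistent with the small-case exhibitions in Fig.~\ref{f8}(a)--(c).

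The hard part is the odd-$n$ case: identifying a defect that simultaneously preserves the degree-2 condition, keeps every $F_i\cap E(D)$ a 2-path, and turns the odd parity of the naive cycle structure into even parity. The rest of the argument is routine bookkeeping on the incidence table of the $e_i^j$ with the cycles $D_i,D_i'$.
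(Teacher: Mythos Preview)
Your framework---labelling vertices $e_i^j$, identifying the $2n$ girth $4$-cycles $D_i,D_i'$, and then seeking a $2$-factorization meeting the hypotheses of Lemma~\ref{SC}---is the same as the paper's, and your even-$n$ case is essentially what the paper does (it obtains $k$ disjoint $8$-cycles for $n=2k$).

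The gap is in the odd case. You assert that the degree-$2$ condition forces a strict $A/B$ alternation of types around $\mathbb{Z}_n$, and then propose to repair the parity break with a single local ``defect'' that still picks a $2$-path in each $D_i$ and $D_i'$. But those two claims pull against each other: if strict alternation were genuinely necessary for $2$-regularity, no relabelling of which $2$-path is chosen could restore it at a single index without breaking it at a neighbour. In fact your type dichotomy is too coarse (each type has two center choices, and the constraint at $e_i^1$ also admits the $(B,A)$ pattern with a matched center/miss pairing), so the ``alternation'' picture is not the full story; you have not actually shown that a defect preserving the $2$-path property exists, nor carried out the parity count you allude to for even cycle length.

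This is exactly where the paper's argument departs from yours. For odd $n=2k+1$ the paper does \emph{not} try to keep every $F_i\cap D$ a $2$-path. It writes down one $8$-cycle and two longer cycles explicitly, and its ``modification of Lemma~\ref{SC}'' is precisely that the long cycles meet some $4$-cycles in a pair of \emph{opposite} edges (when $k$ is odd) or in a \emph{single} edge (when $k$ is even), and that these patterns still refine into a tight $1$-factorization once one checks the alternating $2$-colouring along each even cycle assigns different colours to the two edges landing in a given $4$-cycle. So the variation of Lemma~\ref{SC} that is actually needed is a relaxation of condition~2, not a local parity fix of the kind you sketch. Either produce your defect concretely and verify (i)--(ii) on it, or abandon the $2$-path insistence and follow the paper's explicit cycle description.
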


\begin{proof}
Each vertex $((v_i,j)(v_{i\pm 1},j'))$ of $\mathbb{P}(W(n,2),{\mathcal C})$ representing the edge between the vertices
$(v_i,j)$ and $(v_{i\pm 1},j')$ of $W(n,2)$ where $i\in\mathbb{Z}_n$ and $j,j'\in\{0,1\}$ will be denoted $(i_j(i\pm 1)_{j'})$. We use modifications of Lemma~\ref{SC} separately for the cases of odd and even $n$. If $n=2k+1$ is odd, then we have a 2-factorization of $\mathbb{P}(W(n,2),{\mathcal C})$ one of whose two 2-factors is composed by  three disjoint even-length cycles
not sharing more than two edges with any 4-cycle, namely one of length 8 and two of length $4k+2$ specifically:

\begin{figure}[htp]
\includegraphics[scale=1.1]{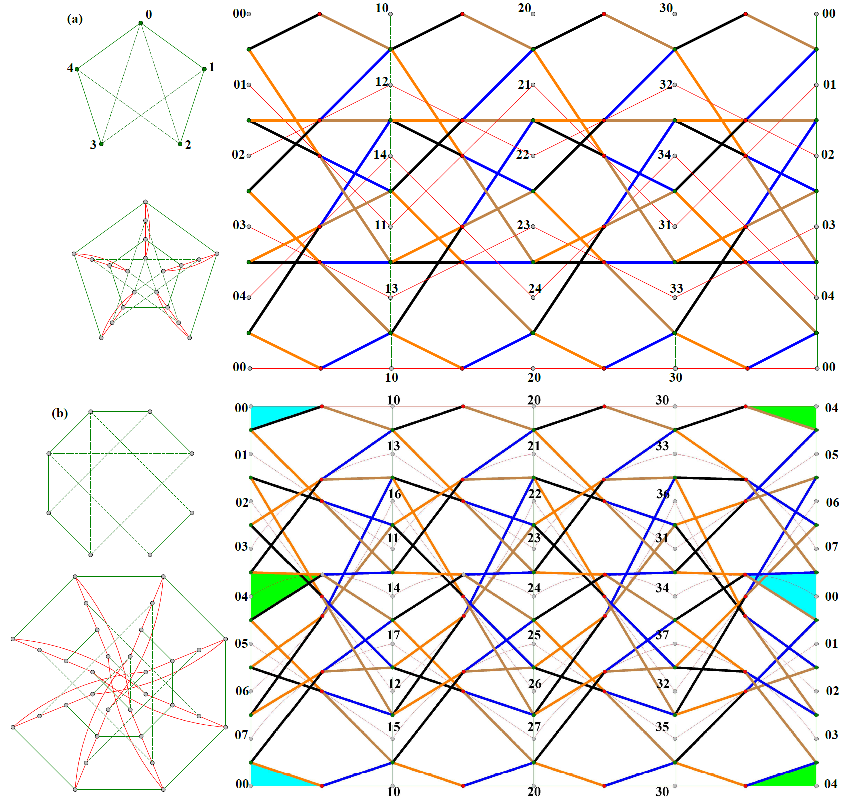}
\caption{Tight factorizations of (a) $\mathbb{P}(\Br(4,5;2))$ and (b) $\mathbb{P}(\MBr(4,8;3))$.}
\label{f9}
\end{figure}

$$\begin{array}{l}
((\!-k_0k_0)(\!-k_0(1\!-k)_1)(\!-k_0k_1)((k\!-1)_0k_1)(-k_1k_1)(\!-k_1(1\!-k)_0)(\!-k_1k_0)((k\!-1)1k_0));\\
((\!-k_0(1\!-k)_0)\cdots(\!-1_00_0)(0_01_0)\cdots((k\!-1)_0k_0)(k_0(k\!-1)_1)\cdots(1_10_1)(0_1-1_1)\cdots(k_1-k_0));\\
((\!-k_1(1\!-k)_1)\cdots(-1_10_1)(0_11_1)\cdots((k\!-1)_1k_1)(k_1(k\!-1)_0)\cdots(1_00_0)(0_0-1_0)\cdots(k_0\!-k_1)),\\
\end{array}$$
which for $k=2$ and $k=3$ can be visualized respectively in Fig.~\ref{f8}(a) and Fig.~\ref{f8}(b) as three alternate-red-blue cycles, one of length 8 and two of length $4k+2$.
The other 2-factor also is formed by even cycles not sharing more than two edges with any 4-cycle, viewable as alternate-green-hazel cycles for $k=2$ in Fig.~\ref{f8}(a) and for $k=3$ in Fig.~\ref{f8}(b).
This 2-factor has reflective $\mathbb{Z}_2$ symmetry on a vertical axis. As for the mentioned modifications of Lemma~\ref{SC}, note that
 the two cycles of length $4k+2$ differ, either for $k$ odd or for $k$ even differ: If $k$ is odd, the two cycles of length $2k+2$ contain opposite edges in the 4-cycles, while if $k$ is even, the two cycles of length $2k+2$ share just one edge with each 4-cycle. Both cases refine into corresponding tight 1-factorizations.
In particular, if $n=2k$ is even, then a 2-factorization of $\mathbb{P}(W(n,2),{\mathcal C})$ is formed by $k$ cycles of length 8 forming a class of cycles$\pmod k$ namely
$$
(\!(i_0(i\!+1)_0\!)(\!(i\!+1)_0(i\!+2)_0\!)(\!(i\!+2)_0(i\!+1)_1\!)(\!(i\!+1)_1i_0\!)(\!(i\!+1)_1i_1\!)(\!(i\!+1)_1(i\!+2)_1)(\!(i\!+2)_1(i\!+1)_0)(\!(i\!+1_0)i_1)\!),
$$
where $i$ is odd, $0<i<n$.
The other 2-factor also is formed by 8-cycles, see Fig.~\ref{f8}(c).
\end{proof}

In order to obtain additional girth-tight graphs with tight factorizations, we recur to a particular case of a cycle decomposition known as {\it linking-ring structure} \cite{WP}, that works for two colors, say red and green. This  structure applies in the following paragraphs only for $n$ even;
(if $n$ is odd, then more than two colors would be needed in order to distinguish adjacent cycles of the decomposition $(W(n,2),{\mathcal C})$).
A {\it linking-ring structure} is
defined in items (i)--(iii) below, as follows. An {\it isomorphism between two cycle decompositions} $(\Gamma_1,{\mathcal C}_1)$ and $(\Gamma_2,{\mathcal C}_2)$ is an isomorphism $\xi:\Gamma_1\rightarrow\Gamma_2$ such that $\xi({\mathcal C}_1)={\mathcal C}_2$. An isomorphism $\xi$ from a cycle decomposition to itself is an {\it automorphism}, written $\xi\in Aut(\Gamma,{\mathcal C})$.
A cycle decomposition
$(\Gamma,{\mathcal C})$ is {\it flexible} if for every vertex $v$ and
each edge $e$ incident to $v$ there is $\xi\in Aut(\Gamma,{\mathcal C})$ such that:
\begin{enumerate}
\item[{\bf(I)}] $\xi$ fixes each vertex of the cycle in
${\mathcal C}$ containing $e$ and
\item[{\bf(II)}] $\xi$ interchanges the two other neighbors of $v$; the edges joining $v$ to
those neighbors are in some other cycle of $\mathcal C$.
\end{enumerate}
A cycle decomposition $(\Gamma,{\mathcal C})$ is {\it bipartite} if $\mathcal C$ can be partitioned into two subsets
$\mathcal G$ (green) and $\mathcal R$ (red) so that each vertex of
$\Gamma$ is in one cycle of $\mathcal G$ and one cycle of $\mathcal R$.

The largest subgroup of $\Aut(\Gamma,{\mathcal C})$
 preserving each of the sets ${{\mathcal C}_1}={\mathcal G}$ ($\mathcal G$ for ``green"), and ${\mathcal C}_2={\mathcal R}$ ($\mathcal R$ for ``red"), is denoted $\Aut^+(\Gamma,{\mathcal C})$.
In a bipartite cycle decomposition, an element of $\Aut(\Gamma,{\mathcal C})$
 either interchanges
$\mathcal G$ and $\mathcal R$ or preserves
each of $\mathcal G$ and $\mathcal R$ set-wise, so it is contained in $\Aut^+(\Gamma,{\mathcal C})$.
This shows that the index of $\Aut^+(\Gamma,{\mathcal C})$ in $\Aut(\Gamma,{\mathcal C})$ is at most 2. If
this index is 2, then we say that $(\Gamma,{\mathcal C})$ is {\it self-dual}; this happens if and only if
there is
$\sigma\in Aut(\Gamma,{\mathcal C})$ such that ${\mathcal G}\sigma={\mathcal R}$ and
${\mathcal R}\sigma={\mathcal G}$.
In \cite{PW}, a cycle decomposition
$(\Gamma,{\mathcal C})$ is said to be a {\it linking-ring (LR) structure} if it is
\begin{enumerate}\item[{\bf(i)}] bipartite, \item[{\bf(ii)}] flexible and \item[{\bf(iii)}]
$\Aut^+(\Gamma,{\mathcal C})$ acts transitively
on $V(\Gamma)$.\end{enumerate}
However, there are tight factorizations of girth-tight graphs $\mathbb{P}(\Gamma,{\mathcal P})$ obtained by relaxing condition (iii) in that definition. So we will say that a cycle decomposition $(\Gamma,{\mathcal P})$ is a {\it relaxed LR structure} if it satisfies just conditions (i) and (ii).

\begin{table}[htp]
$$\begin{array}{|c|c|c|c|}\hline
(0^0_1[a]^0_11[b]0^1_2[c]^3_01[d])&(1^0_2[a]^1_22[b]1^2_4[c]^0_12[d])&(2^0_1[a]^2_31[b]2^1_2[c]^1_21[d])&(3^0_2[a]^3_02[b]3^2_4[c]^2_32[d])\\
(0^1_2[a]^0_12[b]0^2_3[c]^3_02[d])&(1^1_3[a]^1_23[b]1^3_0[c]^0_13[d])&(2^1_2[a]^2_32[b]2^2_3[c]^1_22[d])&(3^1_3[a]^3_03[b]3^3_0[c]^2_33[d])\\
(0^2_3[a]^0_13[b]0^3_4[c]^3_03[d])&(1^2_4[a]^1_24[b]1^4_1[c]^0_14[d])&(2^2_3[a]^2_33[b]2^3_4[c]^1_23[d])&(3^2_4[a]^3_04[b]3^4_1[c]^2_34[d])\\
(0^3_4[a]^0_14[b]0^4_0[c]^3_04[d])&(1^3_0[a]^1_20[b]1^0_2[c]^0_10[d])&(2^3_4[a]^2_34[b]2^4_0[c]^1_24[d])&(3^3_0[a]^3_00[b]3^0_2[c]^2_30[d])\\
(0^4_0[a]^0_10[b]0^0_1[c]^3_00[d])&(1^4_1[a]^1_21[b]1^1_3[c]^0_11[d])&(2^4_0[a]^2_30[b]2^0_1[c]^1_20[d])&(3^0_1[a]^3_01[b]3^1_3[c]^2_31[d])\\\hline
\end{array}$$
\caption{A code representation of the tight factorization in Fig.~\ref{f9}(a).}
\label{V}
\end{table}

\begin{remark}\label{a-b}
With the aim of yielding semisymmetric graphs from LR structures, \cite{WP} defines:
\begin{enumerate}
\item[{\bf(a)}] the {\it barrel} $\Br(k,n;r)$ where $4\le k\equiv 0\pmod{2}$ $n\ge 5$ $r^2\equiv\pm 1\pmod{n}$
$r\not\equiv\pm 1\pmod{n}$ and $0\le r<\frac{n}{2}$ as the graph with vertex set $\mathbb{Z}_k\times\mathbb{Z}_n$ and $(i,j)$ red-adjacent to $(i\pm 1,j)$ and
green-adjacent to $(i,j\pm r^i)$;
\item[{\bf(b)}]
the {\it mutant barrel} $\MBr(k,n;r)$ where $2\le k\equiv n\equiv 0\pmod{2}$
$n\ge 6$ $r^2\equiv\pm 1\pmod{n}$ and $r\not\equiv\pm 1\pmod{n}$ as the graph with vertex set $\mathbb{Z}_k\times\mathbb{Z}_n$ and $(i,j)$ red-adjacent to $(i+1,j)$ for $0\le i<k-1$ $(k-1,j)$ red-adjacent to $(0,j+\frac{n}{2})$ and $(i,j)$ green-adjacent to $(i,j\pm r^i)$.
\end{enumerate}

\begin{figure}[htp]
\hspace*{12.5mm}
\includegraphics[scale=1.24]{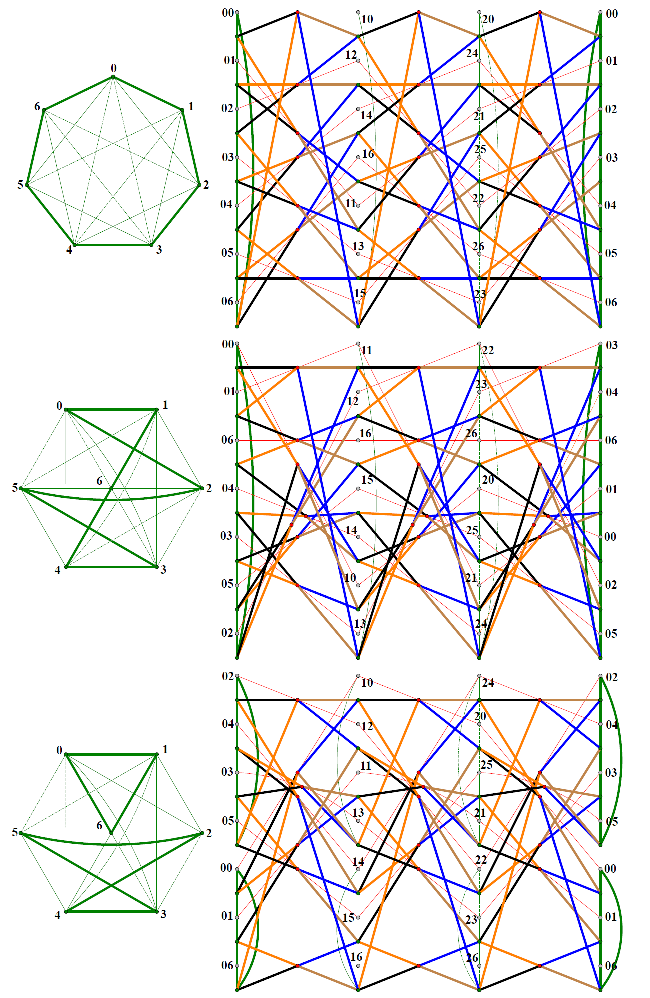}
\caption{Egc  $\mathbb{P}(\Br(3,F))$ for the three 2-factorizations $F$ of $K_7$.}
\label{f10}
\end{figure}

The right side of Fig.~\ref{f9}(a) (resp.~\ref{f9}(b)) represents $\mathbb{P}(\Br(4,5;2))$ (resp. $\mathbb{P}(\MBr(4,8;3))$), where: 
\begin{enumerate}
\item[{\bf(i)}] each vertex $(i,j)$ is denoted $ij$,
\item[{\bf(ii)}] vertices $i0$ appear twice (on top and bottom, to be identified for each $i$), 
\item[{\bf(iii)}] red edges are shown in thin trace, 
\item[{\bf(iv)}]  green edges arising from the cycles $F_1^5=(0,1,2,3,4)$ and $F_2^5=(0,2,4,1,3)$ of $K_5$ (resp. $F_1^8=(0,1,2,3,4,5,6,7)$ and $F_3^8=(0,3,6,1,4,7,2,5)$ of $K_8$) are shown in thin and dashed trace, respectively, and 
\item[{\bf(v)}] the edges of the corresponding partial line graphs are shown in thick trace on the colors orange = $a$, black = $b$, hazel = $c$ and blue = $d$, setting a tight fac\-tor\-iza\-tion.
\end{enumerate}

Vertices of green and red cycles are said to be {\it green} and {\it red}, respectively. To the left of these two graphs in Fig.~\ref{f9}, the corresponding green and red-green subgraphs are shown.\end{remark}

\begin{figure}[htp]
\includegraphics[scale=0.7]{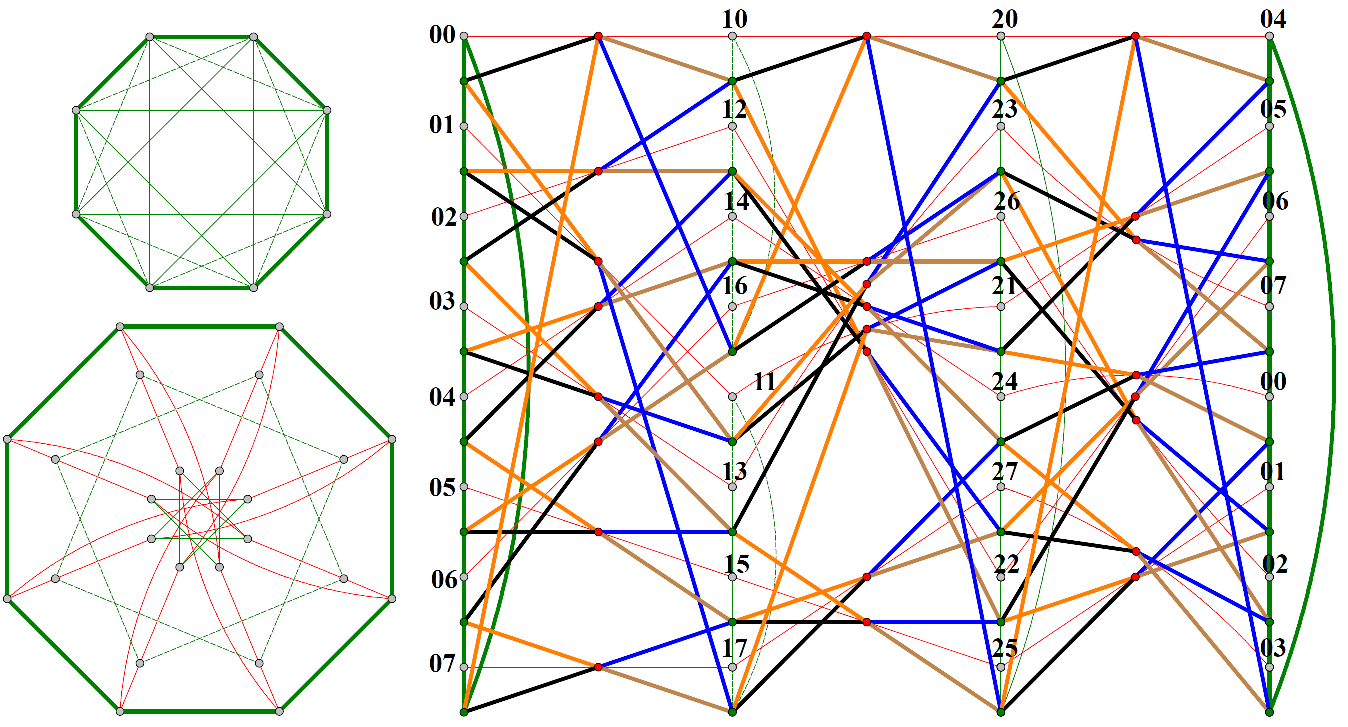}
\caption{Tight factorization of $\mathbb{P}(\MBr(3,\{F_1^8,F_2^8,F_3^8\}))$ based on 2-factors of $K_8$.}
\label{f11}
\end{figure}

\begin{table}[htp]
$$\begin{array}{|c|c|c|}\hline
(0^0_1[a]^0_11[b]0^1_2[c]^2_01[d])&(1^0_2[c]^0_12[d]1^2_4[a]^1_22[b])&(2^0_3[a]^2_03[b]2^3_6[c]^1_23[d])\\
(0^1_2[a]^0_12[b]0^2_3[c]^2_02[d])&(1^1_3[c]^0_13[d]1^3_5[a]^1_23[b])&(2^1_4[a]^2_04[b]2^4_0[c]^1_24[d])\\
(0^2_3[a]^0_13[b]0^3_4[c]^2_03[d])&(1^2_4[c]^0_14[d]1^4_6[a]^1_24[b])&(2^2_5[a]^2_05[b]2^5_1[c]^1_25[d])\\
(0^3_4[a]^0_14[b]0^4_5[c]^2_04[d])&(1^3_5[c]^0_15[d]1^5_0[a]^1_25[b])&(2^3_6[a]^2_06[b]2^6_2[c]^1_26[d])\\
(0^4_5[a]^0_15[b]0^5_6[c]^2_05[d])&(1^4_6[c]^0_16[d]1^6_1[a]^1_26[b])&(2^4_0[a]^2_00[b]2^0_3[c]^1_20[d])\\
(0^5_6[a]^0_16[b]0^6_0[c]^2_06[d])&(1^5_0[c]^0_10[d]1^0_2[a]^1_20[b])&(2^5_1[a]^2_01[b]2^1_4[c]^1_21[d])\\
(0^6_0[a]^0_10[b]0^0_1[c]^2_00[d])&(1^6_1[c]^0_11[d]1^1_3[a]^1_21[b])&(2^6_2[a]^2_02[b]2^2_5[c]^1_22[d])\\\hline
\end{array}$$
$$\begin{array}{|c|c|c|}\hline
(0^0_1[a]^0_11[b]0^1_5[c]^2_05[d])&(1^1_2[c]^0_12[d]1^2_0[a]^1_20[b])&(2^2_3[a]^2_03[b]2^3_1[c]^1_21[d])\\
(0^1_5[a]^0_15[b]0^5_2[c]^2_02[d])&(1^2_0[c]^0_10[d]1^0_3[a]^1_23[b])&(2^3_1[a]^2_01[b]2^1_4[c]^1_24[d])\\
(0^5_2[a]^0_12[b]0^2_4[c]^2_04[d])&(1^0_3[c]^0_13[d]1^3_5[a]^1_25[b])&(2^1_4[a]^2_04[b]2^4_0[c]^1_20[d])\\
(0^2_4[a]^0_14[b]0^4_3[c]^2_03[d])&(1^3_5[c]^0_15[d]1^5_4[a]^1_24[b])&(2^4_0[a]^2_00[b]2^0_5[c]^1_25[d])\\
(0^4_3[a]^0_13[b]0^3_6[c]^2_06[d])&(1^5_4[c]^0_14[d]1^4_6[a]^1_26[b])&(2^0_5[a]^2_05[b]2^5_6[c]^1_26[d])\\
(0^3_6[a]^0_16[b]0^6_0[c]^2_00[d])&(1^4_6[c]^0_16[d]1^6_1[a]^1_21[b])&(2^5_6[a]^2_06[b]2^6_2[c]^1_22[d])\\
(0^6_0[a]^0_10[b]0^0_1[c]^2_01[d])&(1^6_1[c]^0_11[d]1^1_2[a]^1_22[b])&(2^6_2[a]^2_02[b]2^2_3[c]^1_23[d])\\\hline
\end{array}$$
$$\begin{array}{|c|c|c|}\hline
(0^0_1[a]^0_11[b]0^1_6[c]^2_01[d])&(1^0_3[c]^0_13[d]1^3_1[a]^1_21[b])&(2^0_2[a]^2_02[b]2^2_1[c]^1_21[d])\\
(0^1_6[a]^0_16[b]0^6_0[c]^2_06[d])&(1^3_1[c]^0_11[d]1^1_5[a]^1_25[b])&(2^2_1[a]^2_01[b]2^1_4[c]^1_24[d])\\
(0^6_0[a]^0_10[b]0^0_1[c]^2_00[d])&(1^1_5[c]^0_15[d]1^5_0[a]^1_20[b])&(2^1_4[a]^2_04[b]2^4_0[c]^1_20[d])\\
(0^2_3[a]^0_13[b]0^3_4[c]^2_03[d])&(1^5_0[c]^0_10[d]1^0_3[a]^1_23[b])&(2^4_0[a]^2_00[b]2^0_2[c]^1_22[d])\\
(0^3_4[a]^0_14[b]0^4_5[c]^2_04[d])&(1^2_4[c]^0_14[d]1^4_6[a]^1_26[b])&(2^3_5[a]^2_05[b]2^5_6[c]^1_26[d])\\
(0^4_5[a]^0_15[b]0^5_2[c]^2_05[d])&(1^4_6[c]^0_16[d]1^6_2[a]^1_22[b])&(2^5_6[a]^2_06[b]2^6_3[c]^1_23[d])\\
(0^5_2[a]^0_12[b]0^2_3[c]^2_02[d])&(1^6_2[c]^0_12[d]1^2_4[a]^1_24[b])&(2^6_3[a]^2_02[b]2^3_5[c]^1_25[d])\\
\hline
\end{array}$$
\caption{Code representations of $F_1^7$ $F_2^7$ and $F_3^7$.}
\label{VI}
\end{table}

Note that thick edges of colors orange and black form cycles zigzagging between:
\begin{enumerate}
\item[{\bf(A)}] the vertices of each vertical green cycle (excluding the rightmost green cycle) and
\item[{\bf(B)}] their adjacent red vertices to their immediate right.
\end{enumerate}

  Also, note that thick blue and hazel edges form cycles zigzagging between:
 \begin{enumerate}
 \item[{\bf(C)}] the red vertices and
 \item[{\bf(D)}] the vertices of the next vertical green cycle to their right.
 \end{enumerate}

 The girth is realized by 4-cycles with the four colors, with a pair of edges (blue and hazel) to the left of each vertical green cycle and another pair of edges (black and orange) to the corresponding right. This is always attainable, because similar bicolored cycles can always we obtained, generating the desired tight factorizations. For instance, assigning colors $a,b,c,d$ to the edges $(i_j^{j+r^i},\,_i^{i+1}j)$ $(i_j^{j+r^i},\,_i^{i+1}(j+r^i))$ $((i+1)_j^{j+r^i},\,_i^{i+1}(j+r^i))$ and $((i+1)_j^{j+r^i},\,_i^{i+1}j)$ respectively, yields a tight factorization of $\mathbb{P}(\Br(k,n;r))$.

\begin{table}[htp]
$$\begin{array}{|c|c|c|c|}\hline
(0^0_1[a]^0_11[b]0^1_2[c]^3_01[d])&(1^0_2[a]^1_22[b]1^2_4[c]^0_12[d])&(2^0_3[a]^2_33[b]2^3_6[c]^1_23[d])&(3^0_4[a]^3_04[b]3^4_8[c]^2_34[d])\\
(0^1_2[a]^0_12[b]0^2_3[c]^3_02[d])&(1^1_3[a]^1_23[b]1^3_5[c]^0_13[d])&(2^1_4[a]^2_34[b]2^4_7[c]^1_24[d])&(3^1_5[a]^3_05[b]3^5_0[c]^2_35[d])\\
(0^2_3[a]^0_13[b]0^3_4[c]^3_03[d])&(1^2_4[a]^1_24[b]1^4_6[c]^0_14[d])&(2^2_5[a]^2_35[b]2^5_8[c]^1_25[d])&(3^2_6[a]^3_06[b]3^6_1[c]^2_36[d])\\
(0^3_4[a]^0_14[b]0^4_5[c]^3_04[d])&(1^3_5[a]^1_25[b]1^5_7[c]^0_15[d])&(2^3_6[a]^2_36[b]2^6_0[c]^1_26[d])&(3^3_7[a]^3_07[b]3^7_2[c]^2_37[d])\\
(0^4_5[a]^0_15[b]0^5_6[c]^3_05[d])&(1^4_6[a]^1_26[b]1^6_8[c]^0_16[d])&(2^4_7[a]^2_37[b]2^7_1[c]^1_27[d])&(3^0_8[a]^3_08[b]3^8_3[c]^2_38[d])\\
(0^5_6[a]^0_16[b]0^6_7[c]^3_06[d])&(1^1_8[a]^1_27[b]1^7_0[c]^0_17[d])&(2^5_8[a]^2_38[b]2^8_2[c]^1_28[d])&(3^1_0[a]^3_00[b]3^0_4[c]^2_30[d])\\
(0^6_7[a]^0_17[b]0^7_8[c]^3_07[d])&(1^2_0[a]^1_28[b]1^8_1[c]^0_18[d])&(2^6_0[a]^2_30[b]2^0_3[c]^1_20[d])&(3^2_1[a]^3_01[b]3^1_5[c]^2_31[d])\\
(0^7_8[a]^0_18[b]0^8_0[c]^3_08[d])&(1^3_1[a]^1_20[b]1^0_2[c]^0_10[d])&(2^7_1[a]^2_31[b]2^1_4[c]^1_21[d])&(3^3_2[a]^3_02[b]3^2_6[c]^2_32[d])\\
(0^8_0[a]^0_10[b]0^0_1[c]^3_00[d])&(1^4_2[a]^1_21[b]1^1_3[c]^0_11[d])&(2^8_2[a]^2_32[b]2^2_5[c]^1_22[d])&(3^0_1[a]^3_03[b]3^3_7[c]^2_33[d])\\
\hline
\end{array}$$
\caption{Tight factorization of $\mathbb{P}(4,F^9)$.}
\label{VII}
\end{table}

A code representation of the tight factorization in Fig.~\ref{f9}(a) is given in Table~\ref{V}, where each green edge
$\{ij,i(j+r^i)\}$ in $\Br(k,n;r)$ yields a green vertex $i_j^{j+r^i}$ in $\mathbb{P}(\Br(k,n;r))$ each red edge $\{ij,(i+1)j\}$ in $\Br(k,n;r)$ yields a red vertex $_i^{i+1}j$ in $\mathbb{P}(\Br(k,n;r))$ and the color of an edge between a green vertex and a red vertex is indicated between brackets: $[a]$ for orange, $[b]$ for black, $[c]$ for hazel and $[d]$ for blue.

\begin{remark}\label{gener}
Generalizing Remark~\ref{a-b} to get other egc girth-tight graphs, we consider a 2-factorization $F^n=\{F_1^n,F_2^n,\ldots,F_{k-1}^n\}$ of the complete graph $K_n$ for odd $n=2k+1>6$ and use it to define the {\it barrel}  $\Br(k,F^n)$
with
\begin{enumerate}
\item[{\bf(i)}] $\mathbb{Z}_k\times\mathbb{Z}_n$ as vertex set and
 \item[{\bf(ii)}] edges forming precisely red cycles $((0,i),(1,i),\ldots,(k-1,i))$ where $i\in\mathbb{Z}_n$ and green subgraphs $\{j\}\times F_j^n$ where $j\in\mathbb{Z}_k$.
\end{enumerate}

Fig.~\ref{f10} contains representations of  $\mathbb{P}(\Br(3,F^7))$ for three distinct 2-factorizations $F^7$ of $K_7$ with tight factorizations represented as in Fig.~\ref{f9}, with green cycles so that each vertex $(i,0)=i0$ appears just once (not twice, as in Fig.~\ref{f9}(a--b)). In the three cases, $F_1^7$ $F_2^7$ and $F_3^7$  green edges are traced thick, thin and dashed, respectively. To the left of these representations, the corresponding green subgraphs are shown. Code representations of these three tight factorizations can be found in Table~\ref{VI}, following the conventions of Table~\ref{V}. (A different 1-factorization of $K_7$ that may be used with the same purpose is for example $\{(0,1,2,3,4,5,6),(0,3,5,1,6,2,4),(0,2,5)(1,3,6,4)$).

In the same way, by considering the 2-factorization given in $K_9$ seen as the Cayley graph $C_9(1,2,3,4)$ with $F^9$ formed by the 2-factors $F_1^9,F_2^9,F_3^9,F_4^9$ generated by the respective colors 1, 2, 3, 4, namely Hamilton cycles $F_1^9,F_2^9,F_4^9$ but $F_3^9=3K_3$ we get a tight factorization of $\mathbb{P}(4,F^9)$. This is encoded in Table~\ref{VII} in a similar fashion to that of Tables~\ref{IV}--\ref{V}.

If $n=2k$ is even, a similar generalization takes a 2-factorization $F^n=\{F_1^n,$ $F_2^n,\ldots,F_{k-2}^n\}$ of $K_n-\{i(i+k);i=0,\ldots, k-1\}$ and uses the 1-factor $\{i(i+k);i=0,\ldots, k-1\}$ to get a generalized {\it mutant barrel} $\MBr(k-1,F^n)$ in a likewise fashion to that of item (b) in Remark~\ref{a-b} but
modified now via $F^n$ namely with
\begin{enumerate}
\item[{\bf(i')}] $\mathbb{Z}_{k-1}\times\mathbb{Z}_n$ as vertex set and
\item[{\bf(ii')}] edges forming precisely red cycles $((0,i),(1,i),\ldots,(k-1,i),(0,i+\frac{n}{2}),(1,i+\frac{n}{2}),\ldots,(k-1,i+\frac{n}{2}))$ where $i\in\mathbb{Z}_n$ and green subgraphs $\{j\}\times F_j^n$ where $j\in\mathbb{Z}_k$.
\end{enumerate}

 Fig.~\ref{f11} represents a tight factorization of $\mathbb{P}(\MBr(3,F^8))$ where $F^8=\{F_1^8,F_2^8,F_3^8\}$ represented on the upper left of the figure, is such a 2-factorization, with $F_1^8$ and $F_3^8$ as in Fig.~\ref{f9}, and $F_2^8=(0,2,4,6)(1,3,5,7)$ via corresponding thick, thin and dashed, green edge tracing. On the lower left, a representation of the red-green graph $\MBr(3,F^8)$ is found.

We can further extend these notions of barrel and mutant barrel by taking a cycle $G^n=(G_1^n,G_2^n,\ldots,G_\ell^n)$ of copies of the 2-factors
of $F^n$ where $G_i^n\in F^n$ but with no two contiguous $G_i^n$ and $G_{i+1}^n\pmod{n}$ being the same element of $F^n$. Here, $\ell\ge 3$. This defines a barrel $\Br(\ell,G^n)$ or mutant barrel $\MBr(\ell,G^n)$ ($n$ even in this case) and establishes the following.\end{remark}

\begin{theorem}\label{barrels}
The barrels and mutant barrels obtained in Remark~\ref{gener} produce corresponding egc graphs $\mathbb{P}(\Br(\ell,G^n))$ and $\mathbb{P}(\MBr(\ell,G^n))$.
\end{theorem}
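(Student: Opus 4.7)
The plan is to carry over the coloring scheme exhibited for $\mathbb{P}(\Br(k,n;r))$ and $\mathbb{P}(\MBr(k,n;r))$ (below Remark~\ref{a-b} and in Tables~\ref{V}--\ref{VII}) to the generalized setting $G^n=(G_1^n,\ldots,G_\ell^n)$, and then to verify that it is an edge-girth coloring of the resulting partial line graphs.

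First I would check the hypothesis of Lemma~4.10 of \cite{PW} for $(\Br(\ell,G^n),\mathcal{C})$ and $(\MBr(\ell,G^n),\mathcal{C})$, where $\mathcal{C}$ consists of the red cycles (of length $\ell$ in the barrel and $2\ell$ in the mutant barrel) together with the cycles of each green 2-factor $G_i^n$. Since $F^n$ is a 2-factorization of $K_n$ (or of $K_n$ minus a 1-factor when $n$ is even), any two distinct members of $F^n$ are edge-disjoint, so consecutive layers $G_i^n,G_{i+1}^n$ share no edge; this rules out $\mathcal{C}$-alternating 4-cycles. The only triangles that can appear lie inside a single red or green cycle of $\mathcal{C}$ and are thus already contained in $\mathcal{C}$. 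Lemma~4.10 then gives that $\mathbb{P}(\Br(\ell,G^n))$ and $\mathbb{P}(\MBr(\ell,G^n))$ are $1^4$-graphs, with each vertex of the base graph contributing exactly one girth 4-cycle of the partial line graph.

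Next, at each vertex $(i,j)$ of the base graph there sit two green edges $g_+,g_-\in G_i^n$ incident to $j$ and two red edges $r_+,r_-$ incident to $(i,j)$; the four pairwise $\mathbb{P}$-edges among $\{g_+,g_-\}\times\{r_+,r_-\}$ form the unique girth 4-cycle at $(i,j)$. I would color these four $\mathbb{P}$-edges by assigning $a$ to $g_+r_+$, $b$ to $g_+r_-$, $c$ to $g_-r_+$ and $d$ to $g_-r_-$, mirroring the assignment used in the classical case. By construction, the 4-cycle at $(i,j)$ uses all four colors, so tightness holds. For properness, each green vertex $g$ of $\mathbb{P}$ lies in exactly two girth 4-cycles, one at each endpoint of $g$ in the base graph; at those endpoints $g$ plays the role of $g_+$ at one and $g_-$ at the other, so the color pair $\{a,b\}$ is inherited at one end and $\{c,d\}$ at the other, exhausting the four colors. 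The analogous observation for red vertices of $\mathbb{P}$ completes the properness check.

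The main obstacle is the mutant barrel, whose red cycle has length $2\ell$ and twists by $n/2$ between levels $\ell-1$ and $0$. The global $\pm$-orientation of the red edges must be propagated consistently across this twist so that the local rule matches on both sides; once it is fixed, the local color assignment and the local verification are identical to those for the barrel. Nothing else in the argument depends on whether the word $G^n$ comes from the classical powers $r,r^2,\ldots$ of \cite{PW,WP} or from an arbitrary admissible cyclic word with consecutive entries distinct, since only the edge-disjointness of consecutive layers is used.
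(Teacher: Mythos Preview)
Your proposal is correct and follows essentially the same route as the paper. The paper's one-line proof invokes Lemma~\ref{SC} via the zigzagging orange--black and hazel--blue 2-factorization; your direct verification of the $a,b,c,d$ coloring based on orienting the red and green cycles is exactly the content of that lemma unpacked, and your coloring coincides with the one the paper spells out in the discussion between Remark~\ref{a-b} and Theorem~\ref{barrels}. The only cosmetic difference is that you bypass Lemma~\ref{SC} and check properness and tightness by hand, which is arguably cleaner since it avoids having to note that the zigzag cycles in $\mathbb{P}$ are bipartite (green-vertex/red-vertex) and hence of even length.
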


\begin{proof} The zigzagging orange-black and hazel-blue cycles between each pair of contiguous green-vertex and red-vertex columns in the graphs of Fig.~\ref{f9}--\ref{f11} are as in Lemma~\ref{SC}.
\end{proof}

\section{Egc girth-g-regular graphs, g larger than 4}\label{12345}

\begin{theorem}\label{arman} We have the following:
\begin{enumerate}
\item[ {\bf(a)}] the $32$-vertex Armanios-Wells graph $\AW$ \cite{Armanios} \cite[p.~266]{BNC} is an egc $(12)^5$-graphs;
\item[{\bf(b)}] the $36$-vertex Sylvester graph $\Syl$ \cite[p.~223]{BNC} is a $8^5$-graph, but is not egc.
\end{enumerate}
\end{theorem}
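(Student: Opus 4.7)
The plan is to handle parts (a) and (b) separately. Both $\AW$ and $\Syl$ are $5$-regular, distance-regular graphs of girth $5$, and from their intersection arrays a standard count gives exactly $12$ girth-$5$ cycles through each edge of $\AW$ and $8$ through each edge of $\Syl$. Combined with the fact that both are class~$1$ (chromatic index $5$), this establishes the signatures $(12)^5$ and $8^5$. The class~$1$ property for $\AW$ will emerge from the construction in part (a); for $\Syl$ it is established via a direct $5$-edge-coloring independent of tightness.

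For part (a), I would produce an explicit tight $5$-edge-coloring of $\AW$ by exploiting its arc-transitive automorphism group of order $1920$. Fix an edge $e_0$, select a subgroup $H\le\Aut(\AW)$ whose orbit $H\cdot e_0$ is a 1-factor $F_1$, and use left coset representatives of $H$ in a suitable supergroup to transport $F_1$ to the remaining 1-factors $F_2,\ldots,F_5$. Tightness --- that each girth-$5$ cycle meets every $F_i$ in exactly one edge --- is equivariant under $\Aut(\AW)$, so it need only be checked on one representative of each $\Aut(\AW)$-orbit of $5$-cycles. Since there are only $192$ girth-$5$ cycles distributed in a small number of orbits, this reduces to a short finite verification.

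For part (b), I would adapt the incidence-graph argument of Subsection~\ref{Hea} used for the bipartite complement of the Heawood graph. Form the bipartite graph $\GA(\Syl)$ whose parts $V_1$ and $V_2$ are respectively the $90$ edges and the $144$ girth-$5$ cycles of $\Syl$, with adjacency given by incidence. A tight factorization of $\Syl$ corresponds bijectively to an edge-$5$-coloring of $\GA(\Syl)$ that is monochromatic at each vertex of $V_1$ and rainbow at each vertex of $V_2$. Choose a vertex $v\in V(\Syl)$, assign the colors $1,\ldots,5$ to its five incident edges, and propagate: each girth-$5$ cycle forces the colors of its remaining edges once enough are colored, and the small intersection number $c_2(\Syl)=1$ pins down the local second-neighborhood structure sharply enough to make propagation highly constrained.

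The main obstacle is the case analysis. Thanks to the arc-transitivity of $\Aut(\Syl)$ (of order $1440$), one may fix not only $v$ but also the color of one incident edge, reducing the initial freedom to orbits of the arc-stabilizer on the cyclic orderings of the remaining four colors --- only a handful of essentially distinct branches. For each residual branch the forcing through $\GA(\Syl)$ is carried out, the goal being to exhibit a girth-$5$ cycle whose edges cannot together realize all five colors, in direct analogy with the vertex ``$15$'' on the bottom of the representation in Fig.~\ref{f4}. Showing that every branch terminates in such a forced contradiction is the crux of the argument; once this is done, no tight factorization of $\Syl$ can exist, completing the proof that $\Syl$ is not egc.
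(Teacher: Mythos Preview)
Your overall strategy matches the paper's: for (a) build a 1-factorization from symmetry and verify tightness on a small set of pentagons, and for (b) propagate forced colors through girth cycles until a contradiction appears. Neither part, however, is actually carried out in your proposal, and part (a) contains a genuine slip.

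In (a) you write that tightness ``is equivariant under $\Aut(\AW)$, so it need only be checked on one representative of each $\Aut(\AW)$-orbit of $5$-cycles.'' This is only valid if the full automorphism group permutes your color classes $F_1,\ldots,F_5$. An arbitrary $\sigma\in\Aut(\AW)$ carries a rainbow pentagon $C$ to a pentagon meeting each $\sigma(F_i)$ once, which says nothing about the $F_j$ unless $\{\sigma(F_i)\}=\{F_j\}$. The group you may legitimately use is the color-permuting subgroup, i.e.\ your supergroup $G$, and the verification must be over $G$-orbits of pentagons, not $\Aut(\AW)$-orbits. You also leave unverified that a subgroup $H\le\Aut(\AW)$ whose edge-orbit is a $1$-factor, sitting under a supergroup of index $5$, actually exists. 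The paper sidesteps both issues by writing the coloring down explicitly with a built-in $\mathbb{Z}_4$-symmetry (display~(\ref{a0c7})), then checking by hand that the twelve pentagons through each of four representative edges of one color class are rainbow; the $\mathbb{Z}_4$-invariance of the coloring pushes this to all $192$ pentagons.

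For (b) your plan is essentially the paper's argument, framed through $\GA(\Syl)$. The paper works more economically: it names sixteen overlapping pentagons $C_0,\ldots,C_{15}$ of $\Syl$, splits into two branches according to whether certain pairs of edges receive equal colors, and in each branch exhibits a single pentagon ($C_4$ in one branch, $C_{13}$ in the other) that cannot be rainbow. Your outline is sound and the arc-transitivity reduction is reasonable, but the actual case analysis---which you yourself identify as ``the crux''---is the entire content of the proof and is absent here.
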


\begin{proof} {\bf(a)}
The center of Fig.~\ref{13} represents $\AW$ colored as claimed.
The vertices of $\AW$ are denoted $Xi$ ($X\in\{A,B,C,D\}$; $i\in\mathbb{Z}_8$). An edge-color assignment for $\AW$ is generated mod 4 or$\pmod{\mathbb{Z}_4}$ where $\mathbb{Z}_4=\{0,2,4,6\}\subset\mathbb{Z}_8$ is a subgroup and an ideal of $\mathbb{Z}_8$ as follows:

\begin{align}\label{a0c7}\begin{array}{|c|c||c|c|c|c|}\hline
Red&1&(A0,C7)&(A1,C2)&(B0,D1)&(B1,D2)\\
Black&2&(A0,C1)&(A1,C0)&(B0,B3)&(D0,D1)\\
Blue&3&(A0,D6)&(A1,B4)&(B1,C6)&(C1,D3)\\
Hazel&4&(A0,B3)&(A1,D7)&(B0,C5)&(C0,D2)\\
Green&5&(A0,C6)&(A1,C7)&(B0,B5)&(D1,D2)\\\hline
\end{array}\end{align}

\begin{figure}[htp]
\includegraphics[scale=0.73]{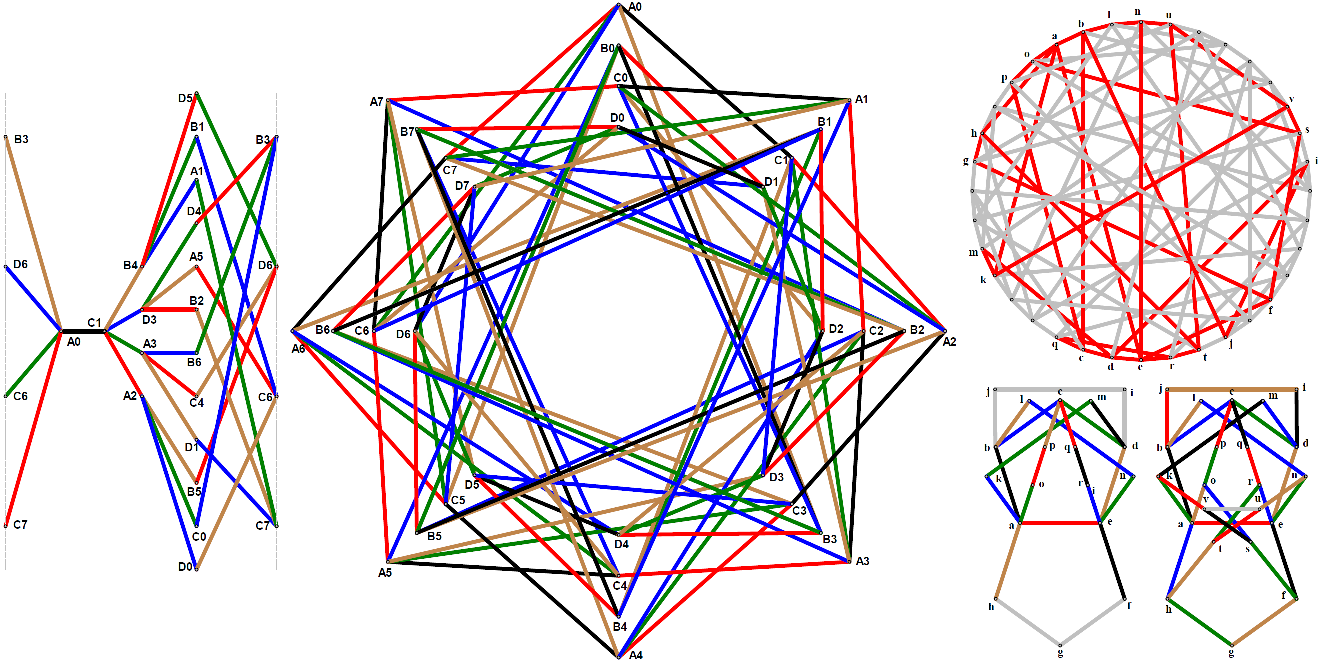}
\caption{The egc Armanios-Wells $(12)^5$-graph and the non-egc Sylvester $8^4$-graph.}
\label{13}
\end{figure}

\noindent where indices are taken$\pmod 8$ so colored-edge orbits$\pmod 4$ are either of the form $$\begin{array}{l}\{(X0,Yi),(X2,Y(i+2)),(X4,Y(i+4),(X6,Y(i+6))\}\mbox{ or of the form:}\\
\{(X1,Yj),(X3,Y(j+2)),(X5,Y(j+4),(X7,Y(j+6))\}\end{array}$$
with $X,Y\in\{A,B,C,D\}$ $i,j\in\mathbb{Z}_8$ and addition taken$\pmod 8$. The left of Fig.~\ref{13} contains the subgraph of $\AW$ spanned by the twelve 5-cycles through the black edge $(A0,C1)$ (where the two dashed lines must be identified), showing the disposition of twelve 5-cycles around an edge of $\AW$. Moreover, the four black edges in the second line of (\ref{a0c7}) represent forty-eight tightly colored 5-cycles, (twelve passing through each black edge, corresponding to the $\frac{5!}{2\times 5}=12$ existing color cycles
$$\begin{array}{c}
(23451),
(23541),
(24351),
(24531),
(25341),
(25431),\\
(24513),
(25413),
(25134),
(24135),
(23514),
(25143))\end{array}$$
and each such cycle yields an orbit of four such 5-cycles.
Since each edge of $\AW$ passes through twelve 5-cycles of $\AW$ and $|E(\AW)|=80$ we count $80\times 12$ 5-cycles in $\AW$ with repetitions. Each 5-cycle in this count is repeated five times, so the number of 5-cycles in $\AW$ is $(80\times 12)/5=16\times 12$. Thus, the number of orbits of tightly-colored 5-cycles is 48 and we obtain a tight coloring of $\AW$.\\
{\bf(b)} The upper right of Fig.~\ref{13} represents $\Syl$ with the following 5-cycles:
$$\begin{array}{lllll}
C_0=(abcde)\!&
C_1=(aefgh)\!&
C_2=(abcgh)\!&
C_3=(cdefg)\!&
C_4=(bcdij)\;
C_5=(hidea)\\
C_6=(hijba)\!&
C_7=(cghid)\!&
C_8=(akmde)\!&
C_9=(ablne)\!&
C_{10}=(abcpo)\\
C_{11}=(cderq)\!&
 C_{12}=(aefso)\!&
 C_{13}=(eahtr)\!&
 C_{14}=(aksvo)\!&
 C_{15}=(enutr).\end{array}$$

The union of $C_0,C_1,C_4,C_8,C_9$ yields the red subgraph.
Coloring tightly $C_0,C_8,C_9,C_{10},$

\noindent $C_{11}$ with color($ab$)=color($ef$) and color($ah$)=color($de$) makes impossible continuing coloring tight\-ly $C_4$ see $\Syl$ as shown in the upper right of Fig.~\ref{13}. Otherwise, in the lower right of Fig.~\ref{13} a forced tight coloring of $C_0,\ldots,C_{12}$ $C_{14}$ and $C_{15}$ is shown in two representations of the subgraph of $\Syl$ induced by these 5-cycles. That leaves $C_{13}$ obstructing a tight-coloring. Thus, $\Syl$ is not egc.
\end{proof}

\begin{figure}[htp]
\includegraphics[scale=0.66]{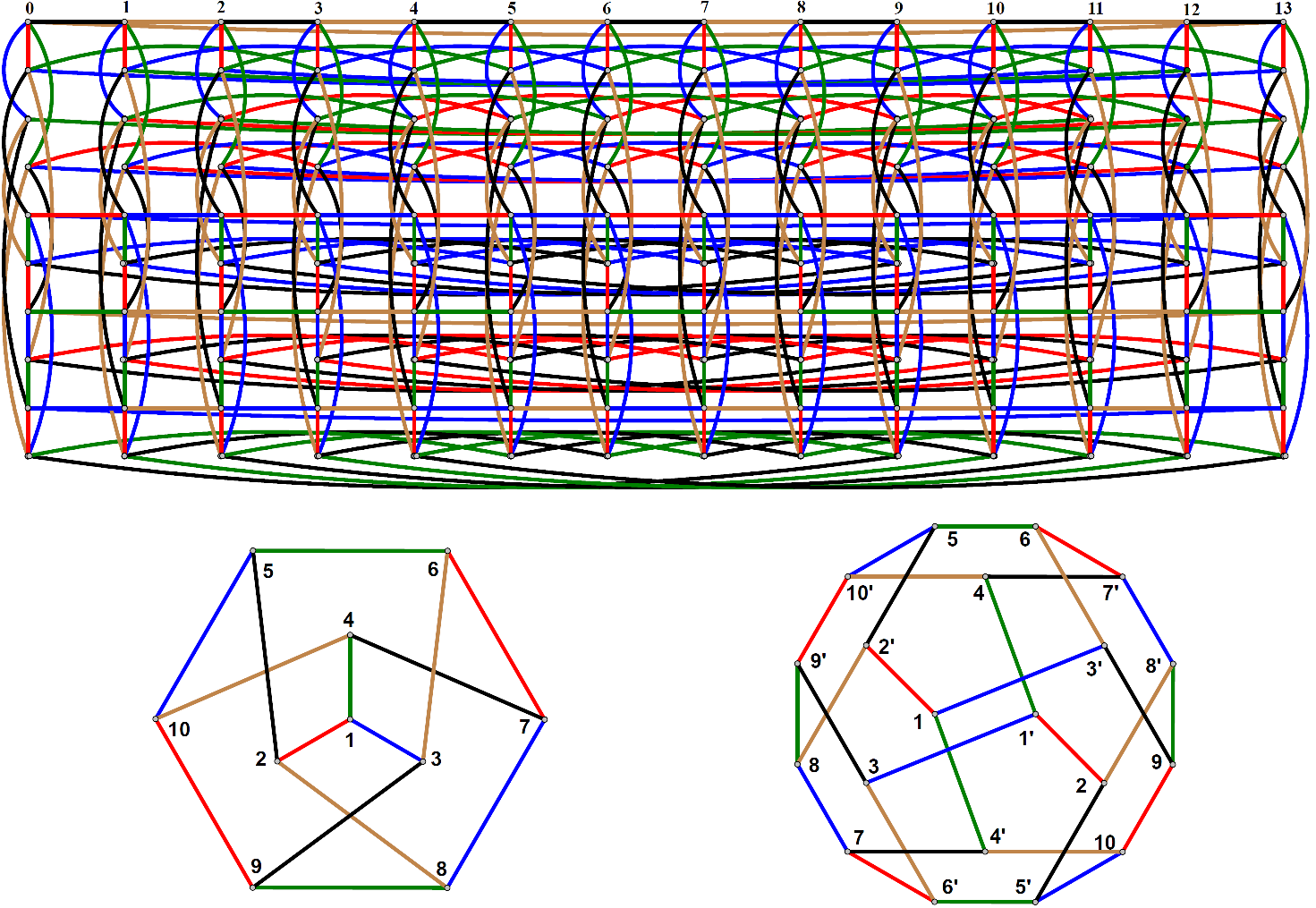}
\caption{Egc $4^30^2$-graph on 140 vertices; Petersen and dodecahedral graphs.}
\label{f12}
\end{figure}

Subsequently, an egc $4^30^2$-graph $\Gamma$ is presented by means of a construction that generalizes the barrel constructions used in Section~\ref{1111}, as follows. See the top of Fig.~\ref{f12}, where fourteen vertical copies of the Petersen graph $\Pet$ are presented in parallel at equal distances from left to right and numbered from 0 to 13 in $\mathbb{Z}_{14}$. The vertices of the $j$-th copy $\Pet^j$ of $\Pet$ are denoted $v_1^j, v_1^j,\ldots,v_{10}^j$ from top to bottom and are joined horizontally by cycles of the Cayley graph of $\mathbb{Z}_{14}$ with generator set $\{1,3,5\}$ namely the cycles

\begin{align}\label{pet}\begin{array}{ll}
(v_i^0,v_i^1,v_i^2,v_i^3,v_i^4,v_i^5,v_i^6,v_i^7,v_i^8,v_i^9,v_i^{10},v_i^{11},v_i^{12},v_i^{13}),&\mbox{ for }i=1,5,7,9;\\
(v_i^0,v_i^3,v_i^6,v_i^9,v_i^{12},v_i^1,v_i^4,v_i^7,v_i^{10},v_i^{13},v_i^2,v_i^5,v_i^8,v_i^{11}),&\mbox{ for }i=2,3,4;\\
(v_i^0,v_i^5,v_i^{10},v_i^1,v_i^6,v_i^{11},v_i^2,v_i^7,v_i^{12},v_i^3,v_i^8,v_i^{13},v_i^4,v_i^9),&\mbox{ for }i=6,8,10.\\
\end{array}\end{align}

\begin{theorem}\label{4^30^2}
There exists an egc $4^30^2$-graph $\Gamma$ of order $140\times k$ for every $0<k\in\mathbb{Z}$ representing all color-cycle permutations, $14k$ times each.
\end{theorem}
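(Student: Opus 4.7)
The plan is to build on the barrel-style constructions of Section~\ref{1111} and exploit the $\mathbb{Z}_{14}$-symmetry of the Cayley-graph skeleton used to glue the fourteen Petersen copies together. I would divide the argument into a structural verification of the signature, an explicit construction of the 1-factorization, a tightness-and-counting step based on symmetry, and a concatenation step.

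For the structural step I would first show that every 5-cycle of $\Gamma$ lies entirely inside a single Petersen copy $\Pet^j$. A 5-cycle using $h\ge 1$ horizontal edges contributes a signed sum of row-generators from $\{\pm1,\pm3,\pm5\}$ that must vanish $\pmod{14}$; the case $h=5$ is ruled out by parity (five odd summands produce an odd sum, incompatible with $0\pmod{14}$), the case $h=4$ is impossible because a single Petersen step changes the row index exactly once and the cycle could not return to its starting row, and the cases $h\in\{1,2,3\}$ are ruled out by a short enumeration combining the admissible generator-sums with the admissible row-pairs realized by one or two Petersen edges of $\Pet$. Once this is established, the classical fact that each edge of $\Pet$ lies on exactly four of its twelve pentagons yields the signature $4^30^2$ for $\Gamma$.

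Next I would construct the 1-factorization. To each row $i\in\{1,\ldots,10\}$ assign a $2$-subset $\{a_i,b_i\}$ of $\{1,2,3,4,5\}$ used to $2$-colour its horizontal Cayley cycle by alternating $a_i$ and $b_i$, which is possible because that cycle has even length $14$. The three Petersen-edges at $v_i^j$ are then forced to use the three elements of $\{1,\ldots,5\}\setminus\{a_i,b_i\}$. Since the ten $2$-subsets of a $5$-set are in bijection with the ten vertices of $\Pet$, I would choose this bijection so that for every Petersen edge $\{v_i^j,v_{i'}^j\}$ the intersection of the two allowed triples contains exactly one element, which becomes the forced colour of that edge. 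Properness at every vertex and tightness on each of the twelve pentagons of $\Pet^j$ then reduce to a finite case check carried out once, in $\Pet^0$.

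Finally, the shift $\rho\colon j\mapsto j+1\pmod{14}$ is an automorphism of $\Gamma$ that induces a fixed permutation $\sigma$ of the palette $\{1,2,3,4,5\}$, so corresponding pentagons in successive copies $\Pet^j,\Pet^{j+1}$ are coloured by palette-shifts of one another. Since $\Pet$ has exactly twelve pentagons and there are exactly twelve colour-cycle types of length $5$, tightness combined with a direct check on the orbit structure of $\sigma$ forces the $168$ tightly-coloured pentagons of the single 14-column unit to split evenly into twelve classes of $14$ pentagons each, one per colour-cycle type. Concatenating $k$ units (formally, replacing $\mathbb{Z}_{14}$ with $\mathbb{Z}_{14k}$ while keeping the generator set $\{1,3,5\}$) multiplies each class by $k$, producing the claimed $14k$ realizations. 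The main obstacle I expect is the construction step: the row-to-$2$-subset bijection must be chosen so as to simultaneously guarantee properness at every vertex, tightness on every pentagon of each $\Pet^j$, and the orbit structure of $\sigma$ needed for the counting argument. This is a bounded finite verification, but finding a bijection that accomplishes all three conditions is the delicate combinatorial heart of the proof.
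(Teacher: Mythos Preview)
Your overall plan is sound and the coloring you propose is more conceptual than the paper's: your bijection between rows and $2$-subsets of $\{1,\dots,5\}$, chosen so that adjacent rows receive disjoint pairs, is exactly the Kneser realization $\Pet\cong K(5,2)$, and coloring each Petersen edge by the unique element missing from both endpoints gives an $S_5$-equivariant proper $5$-edge-coloring. Tightness and the fact that the twelve pentagons realize the twelve color-cycle types then follow from a one-line orbit argument (both are transitive $S_5$-sets of size $12$, and the equivariant map between them is automatically a bijection), so the ``delicate combinatorial heart'' you anticipate is in fact forced. The paper, by contrast, simply writes down one explicit coloring of $\Pet$ (not the Kneser one --- its ten row palette-pairs are not a bijection onto the ten $2$-subsets), lists the twelve resulting color-cycles in a table, colors every $\Pet^j$ identically, and colors the horizontal edges by the parity of $j$.

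Two genuine gaps, however. First, your claim that the shift $\rho:j\mapsto j+1$ induces a single palette permutation $\sigma$ is false in your construction: on the horizontal $14$-cycle in row $i$, $\rho$ swaps $a_i\leftrightarrow b_i$, and with your bijection these ten transpositions are \emph{all} transpositions of $S_5$, which no single $\sigma$ realizes. Fortunately you do not need $\sigma$: since the color of every Petersen edge is determined by its two row-labels alone, all $14k$ copies $\Pet^j$ carry the \emph{identical} coloring, and the $14k$-times-each count is immediate once a single copy is checked. This is precisely how the paper argues.

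Second, your structural step shows only that every $5$-cycle lies in some $\Pet^j$; to obtain girth $5$ and signature $4^30^2$ you must also exclude $3$- and $4$-cycles. The dangerous case is the $4$-cycle of shape $P$--$H$--$P$--$H$ (Petersen edge $a\!-\!b$ in column $j$, a horizontal step in row $b$, the Petersen edge $b\!-\!a$ in the new column, and a horizontal step back in row $a$), which closes exactly when $g_a=g_b$. Ruling this out uses not just the generator set $\{1,3,5\}$ but the paper's specific assignment in display~(\ref{pet}), which places no two Petersen-adjacent rows in the same generator class; you should invoke that assignment explicitly.
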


\begin{proof}
We assert that an egc $4^30^2$-graph as in the statement contains $14k$ disjoint copies of $\Pet$.
We consider the case represented in the top of Fig.~\ref{f12} for $k=1$ and leave the details of the general case to the reader.
Notice the 6-cycle $(v_5^j,v_6^j,v_7^j,v_8^j,v_9^j,v_{10}^j)$ in $\Pet^j$ with its three pairs of opposite vertices $\{v_5^j,v_8^j\}$ $\{v_6^j,v_9^j\}$ $\{v_7^j,v_{10}^j\}$ joined respectively to the neighbors $v_2^j,v_3^j,v_4^j$ of the top vertex $v_1^j$ for $j\in\mathbb{Z}_{14}$. A representation of the common proper coloring of the graphs $\Pet^j$ is in the lower-left part of Fig.~\ref{f12}, where the vertices $v_i^j$ for $i=1,\ldots,10$ and $j\in\mathbb{Z}_{14}$ are simply denoted $i$. This figure shows the twelve 5-cycles of $\Pet$ as color cycles, with red, black, blue, hazel and green taken respectively as 1, 2, 3, 4 and 5. This gives the one-to-one correspondence, call it $\eta$ from the 5-cycles of $\Pet$ onto their color 5-cycles in the top part of Table~\ref{8}.

There are exactly twelve color 5-cycles; they are the targets $\eta$. They are obtained from the 5!=120 permutations on five objects as the twelve orbits of the dihedral group $D_{10}$ generated both by translations$\pmod{5}$ and by reflections of the 5-tuples on $\{1,2,3,4,5\}$.
The edges of $\Gamma$ not in $\cup_{j=0}^{13}Pet^j$ occur between different copies $\Pet^j$ of $Pet$; these are colored as shown in the bottom part of Table~\ref{8}.
This insures the statement  for $k=1$ since the twelve vertical copies $\Pet^j$ of $\Pet$ are the only source of the color cycles. The extension of this for any $0<k\in\mathbb{Z}$ is immediate.
\end{proof}

The dodecahedral graph $\Dod$ with vertex set $\{u_i,w_i|i=1,2,\ldots,10\}$ and edge set formed by an edge pair $\{(u_i,w_{i'}),(u_{i'},w_i)\}$ for each $(v_i,v_{i'})\in(E(\Pet)\setminus\{(v_5,v_6),(v_7,v_8),(v_9,v_{10})\})$ and an edge pair  $\{(u_i,u_{i+1}),(w_i,w_{i+1})\}$ for each $i\in\{5,7,9\}$ is represented in the lower-right of Fig.~\ref{f12}, where $u_i$ and $w_i$ (that we will refer to as antipodal vertices) are respectively indicated by $i$ and $i'$ for $i=1,\ldots,10$. $\Dod$  is a 2-covering graph of $\Pet$ via the graph map $\phi:\Dod\rightarrow \Pet$ such that $\phi^{-1}(\{v_i\})=\{u_i,w_i\}$.

\begin{table}
$$\begin{array}{|lll|llll|}\hline
(1,2,5,10,4)&\rightarrow&(1,2,3,4,5)&&(3,6,7,8,9)&\rightarrow&(4,1,3,5,2)\\
(4,1,2,8,7)&\rightarrow&(5,1,4,3,2)&&(10,9,3,6,5)&\rightarrow&(1,2,4,5,3)\\
(1,2,5,6,3)&\rightarrow&(1,2,5,4,3)&&(4,7,8,9,10)&\rightarrow&(2,3,5,1,4)\\
(2,5,10,9,8)&\rightarrow&(2,3,1,5,4)&&(1,3,6,7,4)&\rightarrow&(3,4,1,2,5)\\
(5,10,4,7,6)&\rightarrow&(3,4,2,1,5)&&(2,8,9,3,1)&\rightarrow&(4,5,2,3,1)\\
(10,4,1,3,9)&\rightarrow&(4,5,3,2,1)&&(5,6,7,8,2)&\rightarrow&(5,1,3,4,2)\\\hline
\end{array}$$
\label{VIII}
$$\begin{array}{|ccc|}\hline
(v_1^j, v_1^{j+1})\mbox{ has color }2,\mbox{ if }j\equiv 0\pmod{2}&\mbox{and} &4,\mbox{ if }j\equiv 1\pmod{2}\\
(v_5^j, v_5^{j+1})\mbox{ has color }1,\mbox{ if }j\equiv 0\pmod{2}&\mbox{and} &3,\mbox{ if }j\equiv 1\pmod{2}\\
(v_7^j, v_7^{j+1})\mbox{ has color }5,\mbox{ if }j\equiv 0\pmod{2}&\mbox{and} &4,\mbox{ if }j\equiv 1\pmod{2}\\
(v_9^j, v_9^{j+1})\mbox{ has color }3,\mbox{ if }j\equiv 0\pmod{2}&\mbox{and} &4,\mbox{ if }j\equiv 1\pmod{2}\\
(v_2^j, v_2^{j+3})\mbox{ has color }5,\mbox{ if }j\equiv 0\pmod{2}&\mbox{and} &3,\mbox{ if }j\equiv 1\pmod{2}\\
(v_3^j, v_3^{j+3})\mbox{ has color }1,\mbox{ if }j\equiv 0\pmod{2}&\mbox{and} &5,\mbox{ if }j\equiv 1\pmod{2}\\
(v_4^j, v_4^{j+3})\mbox{ has color }1,\mbox{ if }j\equiv 0\pmod{2}&\mbox{and} &3,\mbox{ if }j\equiv 1\pmod{2}\\
(v_6^j, v_6^{j+5})\mbox{ has color }3,\mbox{ if }j\equiv 0\pmod{2}&\mbox{and} &2,\mbox{ if }j\equiv 1\pmod{2}\\
(v_8^j, v_8^{j+5})\mbox{ has color }1,\mbox{ if }j\equiv 0\pmod{2}&\mbox{and} &2,\mbox{ if }j\equiv 1\pmod{2}\\
(v_{10}^j, v_{10}^{j+5})\mbox{ has color }5,\mbox{ if }j\equiv 0\pmod{2}&\mbox{and} &2,\mbox{ if }j\equiv 1\pmod{2}\\\hline
\end{array}$$
\caption{Color assignment of the $4^30^2$-graph $\Gamma$ in Theorem~\ref{4^30^2}}.
\label{8}
\end{table}

\begin{theorem}\label{2^30^2}
There exists an egc $2^30^2$-graph $\Gamma$ of order $140\times k$ for every $0<k\in\mathbb{Z}$.
\end{theorem}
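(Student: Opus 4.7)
The plan is to lift the construction of Theorem~\ref{4^30^2} through the 2-covering $\phi:\Dod\to\Pet$ introduced just above the statement. Since $|V(\Dod)|=2|V(\Pet)|$ and each edge of $\Dod$ lies in exactly two of its twelve pentagonal faces (versus four pentagons through each edge of $\Pet$), it is natural to replace the $14$ copies $\Pet^j$ from Theorem~\ref{4^30^2} with $7k$ copies $\Dod^j$, indexed cyclically by $\mathbb{Z}_{7k}$. This immediately delivers $|V(\Gamma)|=7k\cdot 20=140k$ and accounts for the $2^3$ part of the signature, since at every vertex each of the three dodecahedral edges lies on exactly two pentagonal faces.

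First I would construct the horizontal edges as lifts of those in (\ref{pet}): for each horizontal edge $\{v_i^j,v_i^{j+g}\}$ with $g\in\{1,3,5\}$, add both preimages under $\phi$, either as straight lifts $\{u_i^j,u_i^{j+g}\}$ and $\{w_i^j,w_i^{j+g}\}$, or as crossed lifts $\{u_i^j,w_i^{j+g}\}$ and $\{w_i^j,u_i^{j+g}\}$. The choice between straight and crossed at each row $i$ must be compatible with the crossed/straight pattern that already defines $\Dod$ itself (the twelve crossed edges from $E(\Pet)\setminus\{(v_5v_6),(v_7v_8),(v_9v_{10})\}$ against the three straight edges corresponding to $i\in\{5,7,9\}$). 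Regardless of this choice, each horizontal cycle has length at least $7k\ge 7>5$, so the horizontal edges contribute no pentagons, giving the $0^2$ part of signature $2^30^2$.

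Next I would color each internal edge of $\Dod^j$ by the color of its $\phi$-image in $\Pet^j$, and each horizontal edge by the color recorded in Table~\ref{8} for the corresponding Petersen horizontal edge. Properness is preserved because $\phi$ is a local isomorphism. Moreover, every pentagonal face of $\Dod$ is a $5$-cycle projecting bijectively onto a pentagon of $\Pet$ (the twelve faces of $\Dod$ lie above six of the twelve pentagons of $\Pet$, the remaining six pentagons lifting to disjoint $10$-cycles and hence not to girth cycles of $\Dod$). Since Theorem~\ref{4^30^2} tightly colors all twelve pentagons of $\Pet$, the lift tightly colors the twelve pentagonal faces of each $\Dod^j$. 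As these are all the girth cycles of $\Gamma$, the coloring is edge-girth.

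The main obstacle will be the parity-and-avoidance check: one must verify that the crossed-versus-straight choice for the horizontal lifts can be made consistently so that (i) no spurious $5$-cycle arises from a mix of internal dodecahedral and horizontal edges, and (ii) each horizontal cycle has even length, so that it accepts the two-color alternation of the ``horizontal'' colors of Table~\ref{8}. This parity bookkeeping is analogous to those of Remark~\ref{31^3} and the covering-type arguments of Section~\ref{1111}. Once the base case $k=1$ is established, extension to arbitrary $k\in\mathbb{Z}_{>0}$ is immediate by concatenating $k$ copies of the base block, just as in Theorem~\ref{4^30^2}.
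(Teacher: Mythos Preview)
Your approach is the paper's approach: replace the copies of $\Pet$ by copies of $\Dod$, pull the edge-coloring back through the covering $\phi$, and use that the twelve pentagonal faces of each $\Dod^j$ project bijectively onto pentagons of $\Pet$ and therefore inherit tight colorings. The paper takes seven copies $\Dod^j$ ($j\in\mathbb{Z}_7$) for $k=1$, and each horizontal cycle has length $14$, visiting both antipodal vertices $u_i^j,w_i^j$ in every copy (so in your language two of the lifts per row are crossed and the rest straight; parity is automatic).

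Where your outline stops short is precisely the obstacle you name. The paper resolves it with one concrete move you have not made: it does \emph{not} keep the generator assignment of display~(\ref{pet}), but swaps the last two blocks, giving step $3$ to $i\in\{6,8,10\}$ and step $5$ to $i\in\{2,3,4\}$ (display~(\ref{dod})). The paper states explicitly that this reordering is what prevents $5$-cycles from forming across distinct copies $\Dod^j$. Without committing to this (or an equivalent) choice and checking it, your argument is a correct plan but not yet a proof; with it, everything you wrote goes through.
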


\begin{proof} We consider the case $k=1$ and leave the details of the general case to the reader.
 We take seven vertical copies of $\Dod$ presented in parallel at equal distances from left to right and numbered from 0 to 6 in $\mathbb{Z}_7$. The vertices of the $j$-th copy
 $\Dod^j$ of $\Dod$ are denoted $u_1^j$ and $w_i^j$ for $i=1,\ldots,10$ and are joined by the additional cycles

\begin{align}\label{dod}\begin{array}{ll}
(u_i^0,u_i^1,u_i^2,u_i^3,u_i^4,u_i^5,u_i^6,w_i^0,w_i^1,w_i^2,w_i^3,w_i^4,w_i^5,w_i^6),&\mbox{ for }i=1,5,7,9;\\
(u_i^0,u_i^3,u_i^6,u_i^2,u_i^5,u_i^1,u_i^4,w_i^0,w_i^3,w_i^6,w_i^2,w_i^5,w_i^1,w_i^6),&\mbox{ for }i=6,8,10;\\
(u_i^0,u_i^5,u_i^3,u_i^1,u_i^6,u_i^4,u_i^2,w_i^0,w_i^5,w_i^3,w_i^1,w_i^6,w_i^4,w_i^2),&\mbox{ for }i=2,3,4.\\
\end{array}\end{align}
so that each such additional cycle passes through two antipodal vertices of each copy $\Dod^j$. Notice the change of the order of the indices $i\in\{1,\ldots,10\}$ in the assignment of the additional cycles in display~(\ref{dod}) with respect to the one in display~(\ref{pet}). This is done to avoid the formation of 5-cycles not entirely contained in the copies $\Dod^j$ ($j\in\mathbb{Z}_7$).
Since $\Dod$ has girth 5 and signature $2^3=222$ the graph $\Gamma$ given by the union of the seven copies $\Dod^j$ and the just presented additional cycles is a $2^30^2$-graph. By coloring the edges of the additional cycles of $\Gamma$ via the same color pattern as in Theorem~\ref{4^30^2}, it is seen that $\Gamma$ is egc.
\end{proof}

The truncated-icosahedral graph $TI$ is the graph of the truncated icosahedron. This is obtained from the icosahedral graph, i.e. the line graph $\Ico=L(\Dod)$ of $\Dod$ by replacing each vertex $v$ of $\Ico$ by a copy $C_5^{TI}(v)$ of its open neighborhood $N_{Ico}(v)$ considering all such copies $C_5^{TI}(v)$ pairwise disjoint, and replacing each edge $(u,v)$ of $\Ico$ by an edge from the vertex corresponding to $u$ in $C_5^{TI}(v)$ to the vertex corresponding to $v$ in $C_5^{TI}(u)$.
Note $TI$ has sixty vertices, ninety edges, twelve 5-cycles, twenty 6-cycles and signature $1^20=110$.

\begin{theorem}\label{10^4} There exists an egc $10^4$-graph on $840.k$ vertices, for each integer $k>0$.\end{theorem}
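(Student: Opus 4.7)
Following the template of Theorems~\ref{4^30^2} and \ref{2^30^2}, the plan is to take fourteen parallel copies $TI^0,\ldots,TI^{13}$ of the truncated-icosahedral graph $TI$, indexed by $\mathbb{Z}_{14}$, accounting for $14 \cdot 60 = 840$ vertices in the base case $k=1$. Since $TI$ has signature $1^20$, with each vertex incident to two pentagon edges and one edge shared between its two adjacent hexagons, I would augment each vertex $v^j$ by inter-copy horizontal edges forming cycles of length $14$ through the fourteen copies; the step sizes in $\mathbb{Z}_{14}$ would be drawn from a Cayley-style generating set analogous to the $\{1,3,5\}$ pattern of equation~(\ref{pet}). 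These step sizes would be assigned according to the vertex's role in the pentagon/hexagon incidence, and chosen so that no new short cycle arises across the copies. Consequently, the only girth cycles of the resulting graph $\Gamma$ are the $12\cdot 14 = 168$ pentagons of the copies $TI^j$, and the prescribed $10^4$ signature holds at every vertex.

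Second, I would exhibit a tight factorization of $\Gamma$ by adapting the colour-coding scheme of Table~\ref{8}. Within each copy $TI^j$, edges would be coloured according to a fixed proper $5$-edge-colouring of $TI$ in which every pentagon is tightly coloured: this colouring is natural because the twelve pentagons of $TI$ partition its vertex set, so the five colours can be assigned in cyclic order around each pentagon, with the hexagon-hexagon edge at each vertex receiving the colour forced by the adjacent hexagons. The inter-copy edges would then receive a cyclic colour assignment depending on $j\pmod{2}$, in the spirit of the lower block of Table~\ref{8}. Because the only girth cycles of $\Gamma$ are the pentagons lying inside single copies, verifying that $\Gamma$ is egc reduces to checking that each pentagon inside each $TI^j$ is tightly coloured---which is already guaranteed by the base colouring of $TI$.

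Finally, the case of general $k$ would follow by replacing $\mathbb{Z}_{14}$ with $\mathbb{Z}_{14k}$, lengthening each horizontal cycle to $14k$ and keeping the same step-size pattern and colour rule. The main obstacle will be the orbit-dependent assignment of step sizes so that no spurious short cycle is created and the signature equals $10^4$ at every vertex simultaneously; this task is analogous to, but more intricate than, the $\{1,3,5\}$ choice of the Petersen case, because $TI$ has sixty vertices with a richer pentagon-and-hexagon orbit structure, and the hexagon-hexagon edges must be handled differently from the pentagon edges in both the step-size selection and the colouring.
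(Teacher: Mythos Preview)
Your proposal follows essentially the same approach as the paper: a barrel-type construction combining $14k$ copies of $TI$ with horizontal cycles drawn from the Cayley graph of $\mathbb{Z}_{14}$ with generator set $\{1,3,5\}$, patterned on Theorems~\ref{4^30^2} and~\ref{2^30^2}. The paper's own proof is a one-sentence sketch invoking exactly this construction; your outline is in fact more detailed than what the paper provides, correctly identifying that the only girth cycles are the pentagons inside the copies $TI^j$ and that the egc verification reduces to rainbow-colouring those pentagons.

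One small caution: the signature that your construction actually produces is $1^20^3=(1,1,0,0,0)$, not $(10,10,10,10)$ --- at each vertex the two pentagon edges of $TI$ lie in one $5$-cycle each, while the hexagon--hexagon edge and the two horizontal edges lie in none. This appears to be a notational slip in the theorem statement itself that you have inherited; it does not affect the soundness of the construction or the egc claim. A second point you rightly flag as the main obstacle --- the existence of a proper $5$-edge-colouring of $TI$ with every pentagon rainbow and compatible horizontal colours --- is not argued in the paper either, so your level of detail already matches (and exceeds) the paper's.
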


\begin{proof}
By means of a barrel-type construction as in Fig.~\ref{f12}, one can combine $14k$ copies of $TI$ and the Cayley graph of $\mathbb{Z}_{14}$ with generator set $\{1,3,5\}$ to get an egc graph as claimed.
\end{proof}

\begin{remark}\label{W} The point graph of the generalized hexagon
$GH(1,5)$ \cite[p.~204]{BNC}, the point graph of the Van Lint--Schrijver partial geometry \cite[p.~307]{BNC} and the odd graph \cite[p.~259]{BNC} on eleven
points are distance-regular with intersection arrays $\{6,5,5;1,1,6\}$ $\{6,5,5,4;1,1,2,6\}$
and $\{6,5,5,4,4;1,1,2,2,3\}$ respectively. If their chromatic number were 6,  they would be  $(125)^6$-, $(25)^6$- and $(25)^6$-graphs, respectively, but it is known that none of them is egc.
\end{remark}

\section{Hamilton cycles and hamiltonian decomposability}\label{hd}

Some feasible applications of egc graphs occur when the unions of pairs of composing 1-factors are Hamilton cycles, possibly attaining hamiltonian decomposability in the even-degree case. This offers a potential benefit to the applications drawn in Section~\ref{intro}, if an optimization/decision-making problem requires alternate inspections covering all nodes of the involved system,  when the alternacy of two colors is required.

In Fig.~\ref{fig1}, the cases (h--j) and their triangle-replaced graphs (m-o) as well as the case (u), and the 3-colored dodecahedral graph $\Dod$ that has the case (u) as its triangle replaced graph, have the unions of any two of their 1-factors forming a Hamilton cycle, while the cases (k), (p) and (v) have those unions as disjoint pairs of two cycles of equal length. In particular, the 3-cube graph $Q_3$ that admits just two tight factorizations, has one of them creating Hamilton cycle (case (l) via green and either red or blue edges, but not red and blue edges). The triangle-replaced graph, $\nabla(Q_3)$ has corresponding tight factorizations in cases (p--q) with similar differing properties as those of cases (k--l). Preceding Theorem~\ref{stat}, similar comments are made for $\nabla(\Gamma')$ where $\Gamma'$ is $\Dod$ or the Coxeter graph $\Cox$. Recall the union of two 1-factors of $\Dod$ is hamiltonian while the union of two 1-factors of $\Cox$ is not.

\begin{table}[htp]
$$\begin{array}{||c||c|c|c|c|c|c||c||c|c|c||c||c|c|c||}\hline
s=1&5&7&9&11&13&15&s=3&5&7&9&s=5&5&7&9\\\hline
 \!r=6&113&&&&&&\!r=6&131&&&  &&&\\
 \!r=8&211&114&&&&&\!r=8&114&211&&   &&&\\
 \!r=10&111&111&511&&&&\!r=10&111&511&111&               \!   r=   10&555&151&151\\
\!r=12&213&312&112&116&&&\!r=12&132&233&336&         \!r=    12&611&611&211\\
\!r=14&111&111&111&111&117&&\!r=14&141&111&111&        \!r=  14&711&111&117\\
\!r=16&211&114&114&112&211&118&\!r=16&411&112&112&  \!r= 16&811&211&211\\
\!r=18&113&311&111&311&311&111&\!r=18&131&131&333&  \!r= 18&911&111&111\\
\!r=20&211&112&215&215&112&112&\!r=20&211&512&112& \!r=  20&a51&251&251\\
\!r=22&111&111&111&111&111&111& \!r=22&111&111&111&\!r=22&b11&111&111\\
\!r=24&213&314&411&611&611&114&\!r=24&133&231&336&\!r=24&c11&211&211\\
\!r=26&111&111&111&111&111&111&\!r=26&111&111&111&\!r=26&d11&111&111\\
\!r=28&211&112&211&211&217&711&\!r=28&112&211&112&\!r=28&e11&211&217\\
\!r=30&113&311&115&115&111&111&\!r=30&132&135&333&\!r=30&f11&151&111\\\hline
\end{array}$$
\caption{Various cases of Theorem~\ref{re} item 3($e$).}
\label{IX}
\end{table}

In Fig.~\ref{f2}(d), the three color partitions of $Q_4$ namely (12)(34), (13)(24) and (14)(23), yield 2-factorizations with 2-factors formed each by two cycles of equal length $\frac{1}{2}|V(Q_4)|=8$. We denote this fact by writing $Q_4(2,2,2)$. In a likewise fashion, we can denote toroidal items in  Fig.~\ref{f2} as follows:
(e) $\{4,4\}_{12,2}^4(1,4,3)$ formed by 2-factorizations with 2-factors of one, three and four cycles of equal lengths 24, 6 and 8, respectively. Similarly:
(f) $\{4,4\}_{10,2}^4(1,2,1)$;
(g) $\{4,4\}_{6,3}^3(3,3,3)$;
(h) $\{4,4\}_{20,1}^0(2,1,1)$;
(i) $\{4,4\}_{28,1}^0(1,1,2)$; and
(j) $\{4,4\}_{22,1}^5(1,2,1)$.

Table~\ref{IX} lists various cases of Theorem~\ref{re} item 3($e$), indicating without parentheses or commas the triples $abc$ corresponding to the numbers $a$ $b$ and $c$ of cycles (of equal length in each case) of the respective 2-factors
$(12)$ $(13)$ and $(14)$.

\begin{remark}\label{differ} For the toroidal cases in Theorem~\ref{re} item 3 depicted as in Fig.~\ref{f2}(f,g,h,j), assume that the 2-factors $(12)$ and $(14)$ complete 2-factorizations composed by {\it 1-zigzagging} cycles of equal length (i.e., composed by alternating horizontal and vertical edges) and that the 2-factors $(13)$ and $(24)$ are composed by vertical and horizontal edges, respectively. This way, while vertical edges form $\gcd(r,s)$ cycles of equal length, horizontal edges form $t$ cycles of not necessarily the same length, so the notation in the previous paragraph cannot be carried out for example for item 3($e$) because $\gcd(r,s)\ne t$. So we modify that notation for such cases by simply writing $\{4,4\}_{r,t}^s(a,b,c)$ that we call the {\it star notation} \cite{DS}.\end{remark}

\begin{theorem}\label{wf} In the star notation of Remark~\ref{differ}, each applicable toroidal case $\Gamma=\{4,4\}_{r,t}^s$ as in Theorem~\ref{re} is expressible as:
$\{4,4\}_{r,t}^s(\frac{1}{2}\gcd(r,|t-s|),\gcd(r,s),\frac{1}{2}\gcd(r,t+s)).$\end{theorem}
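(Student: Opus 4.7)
The plan is to realize $\{4,4\}_{r,t}^s$ as the quotient $\mathbb{Z}^2/\Lambda$ of the universal-cover $\{4,4\}$-tessellation by the translation lattice $\Lambda=\langle(r,0),(-s,t)\rangle$; this lattice has index $rt$ in $\mathbb{Z}^2$, matching the $rt$ vertices of the cutout. Under this identification, each of the three 2-factors $(12)$, $(13)$, $(14)$ appearing in the star notation lifts to a family of parallel straight paths in $\mathbb{Z}^2$. The vertical 2-factor $(13)$ advances by $(0,1)$ per edge; each of the two 1-zigzagging 2-factors consists of cycles alternating horizontal and vertical edges, hence advances by either $(+1,+1)$ or $(-1,+1)$ per consecutive pair of edges (one \emph{double-step}). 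By the conventions spelled out in Remark~\ref{differ}, the 2-factor $(14)$ corresponds to the direction $(+1,+1)$ and $(12)$ to the direction $(-1,+1)$.

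The counting then reduces to a single lattice computation: for each direction vector $d\in\{(0,1),(+1,+1),(-1,+1)\}$, a cycle closes in the quotient at the smallest positive integer $m$ with $md\in\Lambda$, and the cycle has length $m$ when $d=(0,1)$ and $2m$ in the zigzag cases (each unit of $d$ covers two edges). Writing $md=\alpha(r,0)+\beta(-s,t)$ for integers $\alpha,\beta$, the second coordinate forces $\beta=m/t$, and then the first coordinate becomes a linear congruence mod $r$: for $d=(0,1)$ one gets $\beta s\equiv 0\pmod r$, so $m=tr/\gcd(r,s)$; for $d=(+1,+1)$ one gets $\beta(t+s)\equiv 0\pmod r$, so $m=rt/\gcd(r,t+s)$; and for $d=(-1,+1)$ one gets $\beta(t-s)\equiv 0\pmod r$, so $m=rt/\gcd(r,|t-s|)$. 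Dividing the vertex count $rt$ by the corresponding cycle length produces the respective cycle counts $\gcd(r,s)$, $\tfrac12\gcd(r,t+s)$ and $\tfrac12\gcd(r,|t-s|)$, which reassemble into the stated triple.

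The main obstacle is essentially bookkeeping. First, one must pick the sign of $s$ in the lattice generator $(-s,t)$ consistently with the convention ``top shifted $s$ squares to the right relative to bottom'' of Remark~\ref{rectangle}; reversing this sign would swap $t-s$ with $t+s$ and hence exchange the roles of $(12)$ and $(14)$. Second, one must verify that each $(\pm1,+1)$-zigzag family is a genuine 2-factor all of whose cycles have the common length $2m$---this is automatic because any two starting vertices of $\mathbb{Z}^2$ differ by a translation that carries one lifted zigzag path to another, so projecting to the quotient preserves lengths. Finally, the degenerate cases $s\equiv 0$, $s\equiv t$ or $s+t\equiv 0\pmod r$ are absorbed by the convention $\gcd(r,0)=r$; the half-integer in the zigzag counts is never an obstruction, because the egc hypotheses in Theorem~\ref{re} item~3 guarantee that $\gcd(r,t\pm s)$ is even in every applicable toroidal case.
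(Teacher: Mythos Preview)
Your argument is correct and, in fact, more complete than the paper's own proof. Both rest on the same underlying idea---the zigzag 2-factors lift to straight lines of slope $\pm 1$ in the universal cover, and their cycle count is governed by the index of the corresponding direction in the translation lattice---but you and the paper package this differently.

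The paper argues geometrically inside the rectangular cutout $\Phi$: it draws the upper-right-to-lower-left diagonal $L_1$ through $(0,0)$, observes that it re-enters the bottom border at $(0,r-t+s)$, and asserts that $L_1$ is one of $z=\tfrac12\gcd(r,|t-s|)$ parallel diagonals, each carrying two 1-zigzagging cycles of $F_{12}$ and $F_{34}$. It treats only the $(12)(34)$ case explicitly and leaves the other two 2-factorizations to the reader. Your version replaces this picture by the quotient $\mathbb{Z}^2/\Lambda$ with $\Lambda=\langle (r,0),(-s,t)\rangle$ and reduces all three cases uniformly to the single computation ``smallest $m>0$ with $md\in\Lambda$'' for $d\in\{(0,1),(1,1),(-1,1)\}$. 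This buys you a cleaner derivation of the three gcd expressions and a transparent reason why the zigzag counts come with a factor $\tfrac12$ (each lattice step in direction $(\pm1,1)$ spans two edges). The paper's geometric picture, in return, makes the connection to Fig.~2(c) and to the colored diagonals more immediate.

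Your bookkeeping caveats are well placed: the sign of $s$ in the second generator does govern which of $(12)$, $(14)$ gets $t-s$ versus $t+s$, and your parity check---that $r$ and $t\pm s$ are simultaneously even in every case of Theorem~\ref{re} item~3, so both $\gcd(r,t\pm s)$ are even---is exactly what is needed to ensure the half-integers are integers.
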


\begin{proof}
We prove the statement for the toroidal cases of Theorem~\ref{re} with two colors on horizontal cycles and the other two on vertical cycles, for the factorization $\{F_{12},F_{34}\}$ and leave the rest to the reader. Consider the straight upper-right-to-lower-left line $L_1$ from the upper-right vertex $(0,0)$ in the cutout $\Phi$ (Remark~\ref{rectangle}) of $\mathbb{Z}_r\times\mathbb{Z}_t$ passing through $(0,r-t+s)$ in the lower border of $\Phi$
 and formed by the diagonals of $\frac{rt}{2\gcd(r,|t-s|)}$ squares representing 4-cycles of $\Gamma$. $L_1$ determines
 two 1-zigzagging cycles $C_1^0,C_1^1$ through $(0,0)$ in $F_{12},F_{34}$ respectively, touching $L_1$ on alternate vertices of $\Gamma$.
In the end, we get parallel lines $L_1,\ldots,L_z$ where $z=\frac{1}{2}\gcd(r,|t-s|)$ such that each $L_i$ ($i=1,\ldots,z$) determines two 1-zigzagging cycles $C_i^0,C_i^1$ in $F_{12},F_{34}$ respectively, touching $L_i$ at alternate vertices of $\Gamma$. An example is shown in Fig.~\ref{f2}(c) for $r=12,t=5,s=9$ with $z=2$ where $L_1$ is given in black thin trace and $L_2$ is given in gray thin trace, (not considering here the intermittent diagonals).
\end{proof}

\begin{corollary}
If $\frac{1}{2}\gcd(r,|t-s|)=\gcd(r,s)=\frac{1}{2}\gcd(r,t+s)=1$ then the 2-factors $(12)$ $(34)$ $(13)$ $(14)$ and $(23)$ are composed by a Hamilton cycle each (a total of six Hamilton cycles), comprising the 2-factorizations $(12)(34)$ and $(14)(23)$.
\end{corollary}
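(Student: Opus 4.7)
The plan is to derive the corollary as a direct counting consequence of Theorem~\ref{wf}, supplemented by the observation in its proof that each parallel zigzag line produces \emph{two} cycles of equal length (one in each constituent 2-factor of the corresponding 2-factorization), not just one.

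First I would apply Theorem~\ref{wf} verbatim: under the hypothesis $\frac{1}{2}\gcd(r,|t-s|)=\gcd(r,s)=\frac{1}{2}\gcd(r,t+s)=1$, the 2-factors $(12)$, $(13)$ and $(14)$ each decompose into a single cycle. Since each 2-factor spans all $rt$ vertices of $\Gamma=\{4,4\}_{r,t}^s$, a decomposition into a single cycle means that cycle has length $rt$, i.e., is a Hamilton cycle. This immediately produces three Hamilton cycles.

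Next I would address the ``partner'' 2-factors $(34)$ and $(23)$, which the star notation does not mention directly but which are forced by the construction. In the proof of Theorem~\ref{wf}, each of the $z=\frac{1}{2}\gcd(r,|t-s|)$ parallel upper-right-to-lower-left lines $L_i$ simultaneously determines a 1-zigzagging cycle $C_i^0$ in $F_{12}$ and a 1-zigzagging cycle $C_i^1$ in $F_{34}$ (touching $L_i$ on complementary alternate vertices), and these two cycles have the same length. Hence $(34)$ has the same number of cycles as $(12)$, namely $1$, so $(34)$ is also a Hamilton cycle. The identical argument applied to the orthogonal diagonal direction (governed by the parameter $\frac{1}{2}\gcd(r,t+s)$) shows that $(23)$ is paired with $(14)$ and is likewise a Hamilton cycle.

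Collecting the five 2-factors $(12),(34),(13),(14),(23)$, each is a Hamilton cycle; together with the $(24)$ horizontal Hamilton cycle in the case this too reduces to a single cycle, this accounts for the listed Hamilton cycles. In particular, the unions $(12)\cup(34)$ and $(14)\cup(23)$ are 2-factorizations of $\Gamma$ into two Hamilton cycles each, so both $(12)(34)$ and $(14)(23)$ are Hamilton decompositions. The only non-routine step is the symmetry argument for $(34)$ and $(23)$, which is not explicit in the statement of Theorem~\ref{wf} but is immediate from how its proof constructs the parallel lines $L_1,\ldots,L_z$: each line simultaneously yields one cycle per factor in the ambient 2-factorization, by the checker-board alternation of the $F_{12}$ and $F_{34}$ (respectively $F_{14}$ and $F_{23}$) vertex-classes along $L_i$. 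So no additional obstacle arises beyond quoting Theorem~\ref{wf} together with this built-in pairing of the two 1-zigzagging factors.
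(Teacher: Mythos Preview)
Your argument is correct and matches the paper's approach: the paper's one-line proof simply cites Remark~\ref{differ} together with the equalities in the hypothesis, and your write-up is precisely an unpacking of that reference, including the pairing of $(12)$ with $(34)$ and of $(14)$ with $(23)$ via the lines $L_i$ from the proof of Theorem~\ref{wf}. Your caution about $(24)$ is also appropriate, since by Remark~\ref{differ} the factor $(24)$ consists of $t$ horizontal cycles and is not governed by the stated gcd conditions; the ``six'' in the corollary appears to be a slip for the five listed factors.
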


\begin{proof}
This is due to Remark~\ref{differ} and to the quadruple equality in the statement.
\end{proof}

Additional examples are provided in display~(\ref{X}) for fixed $t=2$ as in Theorem~\ref{re} item 3($b$). The reader is invited to do similarly for Theorem~\ref{re}, items 3($a$) and 3($c$).

\begin{align}\label{X}
\begin{array}{||c||c|c||c||c|c||c||c|c||c||}
\hline r&s=4&s=6&r&s=4&s=6&r&s=4&s=6\\\hline
8&141&&14&121&121&20&141&122\\
10&121&&16&141&124&22&121&121\\
12&143&162&18&123&131&24&143&164\\\hline
\end{array}
\end{align}

\begin{corollary}\label{hamil}
In all cases of Theorem~\ref{re} item 2(b), there are exactly two hamiltonian 2-factorizations. Moreover,
the toroidal graphs $\Gamma$ in Theorem~\ref{re} are

\begin{enumerate}
\item  $\Gamma(222)$ for items 1(c)--2(a);
\item  $\Gamma(211)$ for item 2(b) just for $s\equiv 1\pmod{4}$;
\item $\Gamma(112)$ for item 2(b) just for $s\equiv 3\pmod{4}$.
\end{enumerate}
\end{corollary}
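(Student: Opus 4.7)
The plan is to read off the star-notation triples from Theorem~\ref{wf} (where it applies) combined with the 1-zigzag/2-zigzag dichotomy summarised in the two bullet points immediately preceding the corollary, and then interpret an entry equal to $1$ in such a triple as a 2-factor that is a single Hamilton cycle -- hence a hamiltonian 2-factorization. An entry $2$ corresponds to a 2-factor splitting into two equal-length cycles and therefore contributes no Hamilton cycle.

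First I would dispose of items 1(c)--2(a), namely $Q_4=\{4,4\}_{4,4}^0$ and its concatenations $\{4,4\}_{2\ell,4}^0$ with $\ell\in 2\mathbb{Z}$, $\ell\geq 2$. Here $s=0$ and, by the symmetry visible in Table~\ref{I} and in Subsection~\ref{Q4}, all three 2-factorizations $(12)(34)$, $(13)(24)$, $(14)(23)$ are 1-zigzagging with isomorphic closure patterns. The zigzag closure argument gives $\tfrac12\gcd(2\ell,4)=2$ for each of the three counts (since $\ell$ is even), so every 2-factor splits into exactly two cycles of length $rt/4$. This yields $\Gamma(222)$, and no 2-factor is a Hamilton cycle, which is why these cases are not credited with a hamiltonian 2-factorization in the corollary.

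Next I would handle item 2(b), $\Gamma=\{4,4\}_{4s,1}^s$ with $s$ odd and $s>4$. According to the bullet points preceding the corollary, exactly one of the three 2-factorizations is 1-zigzagging in each subcase: it is $(12)(34)$ when $s\equiv 1\pmod 4$ and $(14)(23)$ when $s\equiv 3\pmod 4$. For the unique 1-zigzagging factorization, Theorem~\ref{wf} supplies the cycle count $\tfrac12\gcd(4s,s-1)=\tfrac12\gcd(4,s-1)=2$ in the first subcase and the symmetric $\tfrac12\gcd(4s,s+1)=2$ in the second, producing the ``$2$'' entry of the star triple. For each of the two remaining, 2-zigzagging factorizations I would verify by a direct walk analysis on the band $t=1$ that the 2-factor is a single Hamilton cycle: the effective ``double step'' combines a horizontal shift of $\pm 2$ with the diagonal wrap $\pm s$ inherited from the cutout identification, and one checks that its order in $\mathbb{Z}_{4s}$ is exactly $4s$, using the refinement $\gcd(4s,s\pm 1)=\gcd(4,s\pm 1)\in\{2,4\}$ by $s\bmod 4$. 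This forces both remaining entries to be ``$1$'', yielding $\Gamma(211)$ when $s\equiv 1\pmod 4$ and $\Gamma(112)$ when $s\equiv 3\pmod 4$. In both subcases the two 2-zigzagging factorizations are precisely the hamiltonian 2-factorizations, which simultaneously establishes the opening sentence of the corollary.

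The main obstacle, and the step I expect to consume most of the effort, is the Hamilton-cycle verification for the 2-zigzagging factorizations on the thin band $t=1$. Because the band has height $1$, the ``vertical'' half of every 2-zigzag step wraps immediately and gets absorbed into a combined horizontal-plus-diagonal displacement, so one has to exclude every proper divisor of $4s$ as the order of this displacement; this boils down to a short case analysis modulo $4$ that distinguishes $s\equiv 1$ from $s\equiv 3$. Once this single Hamilton-cycle step is in hand, the star triples, the $\Gamma(\cdots)$ labels, and the count of hamiltonian 2-factorizations all follow mechanically by assembling Theorem~\ref{wf} with the 1-zigzag/2-zigzag dichotomy stated in the paper.
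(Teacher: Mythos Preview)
Your overall plan matches the paper's terse proof (which simply says ``the previous comments cover the points in the statement''), and your treatment of item 2(b) is essentially the paper's argument fleshed out: one 1-zigzagging 2-factorization contributing the entry~$2$, two 2-zigzagging ones contributing the Hamilton cycles, with the $s\bmod 4$ split governing which slot gets the~$2$. Your proposed direct walk analysis on the $t=1$ band is more than the paper actually provides; the paper relies instead on the worked examples $\{4,4\}_{20,1}^5(2,1,1)$ and $\{4,4\}_{28,1}^7(1,1,2)$.

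There is, however, a genuine error in your handling of items 1(c)--2(a). You assert that for $Q_4$ and its concatenations ``all three 2-factorizations $(12)(34)$, $(13)(24)$, $(14)(23)$ are 1-zigzagging'', and then apply $\tfrac12\gcd(2\ell,4)=2$ to \emph{each} of the three counts. But the paragraph immediately preceding the two bullet points you cite says explicitly that item 1(c) ``behaves differently from those in Theorem~\ref{wf} in that the 2-factors in question are 1-zigzagging in only one of the three 2-factorizations, while the other two 1-factorizations are 2-zigzagging''. So for $Q_4$ exactly one factorization is 1-zigzagging and two are 2-zigzagging, yet all three yield two cycles each; the paper reads off $\Gamma(222)$ directly from Fig.~\ref{f2}(d), not from the formula of Theorem~\ref{wf} (which does not apply here, and in any case gives three different expressions, not the same one thrice). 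This also breaks the implicit principle you rely on, that a 2-zigzagging factorization always produces a Hamilton cycle: it does for item 2(b), but not for item 1(c). The fix is simply to treat items 1(c)--2(a) by the direct inspection the paper uses, rather than trying to force them through Theorem~\ref{wf}.
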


\begin{proof}
The toroidal graphs in Theorem~\ref{re} items 1($c$) and 2($b$) behave differently from those in Theorem~\ref{wf} in that the 2-factors in question are 1-zigzagging in only one of the three 2-factorizations, while the other two 1-factorizations are 2-zigzagging, namely:

\begin{enumerate}\item[{\bf(a)}]
in Theorem~\ref{re} items 1($c$) and 2($b$), just for $s\equiv 1\pmod{4}$ the 1-factorization (12)(34) is 1-zigzagging and the 1-factorizations (13)(24) and (14)(23) are 2-zigzagging;
\item[{\bf(b)}] in Theorem~\ref{re} item 2($b$) just for $s\equiv 3\pmod{4}$ the 2-factorizations (12)(34)--(13)(24) are 2-zigzagging and the 2-factorization (14)(23) is 1-zigzagging.\end{enumerate}
\end{proof}

\section{Applications to 3-dimensional geometry}\label{3d}

\begin{figure}[htp]
\includegraphics[scale=0.376]{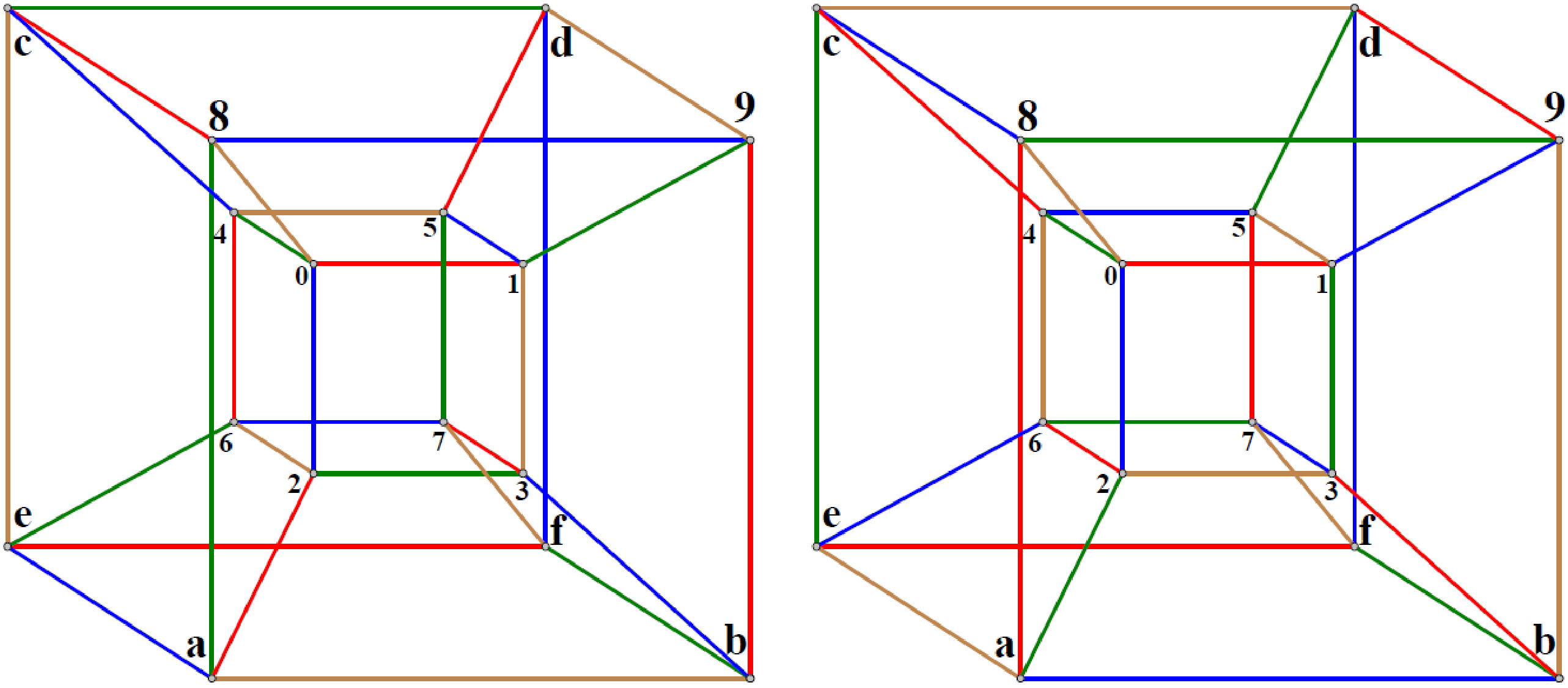}
\caption{Two egc edge-colored 4-cube graphs.}
\label{14}
\end{figure}

Fig.~\ref{14} redraws the two toroidal copies of $Q_4$ in the lower center and right of Fig.~\ref{f3} (arising in Subsection~\ref{Q4} from the central and right latin squares, respectively, in display~(\ref{(1)})) as edge-colored 4-cubes with common vertex set $\{0,\ldots,7\}\cup\{i+8=i'|i=0,\ldots,7\}$ expressed in lowercase hexadecimal notation, in order to extract in Subsections~\ref{sub}, \ref{use} and~\ref{sigue} piecewise linear (PL) (as in \cite{Rourke}) realizations of two enantiomorphic (i.e., mirror images of each other) compounds of four M\"obius strips (as in \cite{Fuchs,HV}). In the sequel, such pair of compounds is shown in Subsection~\ref{poly} to be equivalent to corresponding enantiomorphic Holden-Odom-Coxeter polylinks of four locked hollow equilateral triangles each \cite{Coxeter,Holden,Holden2}, from a group-theoretical point of view, having determined the automorphism group of the compounds in Subsection~\ref{au}.

\begin{figure}[htp]
\includegraphics[scale=0.85]{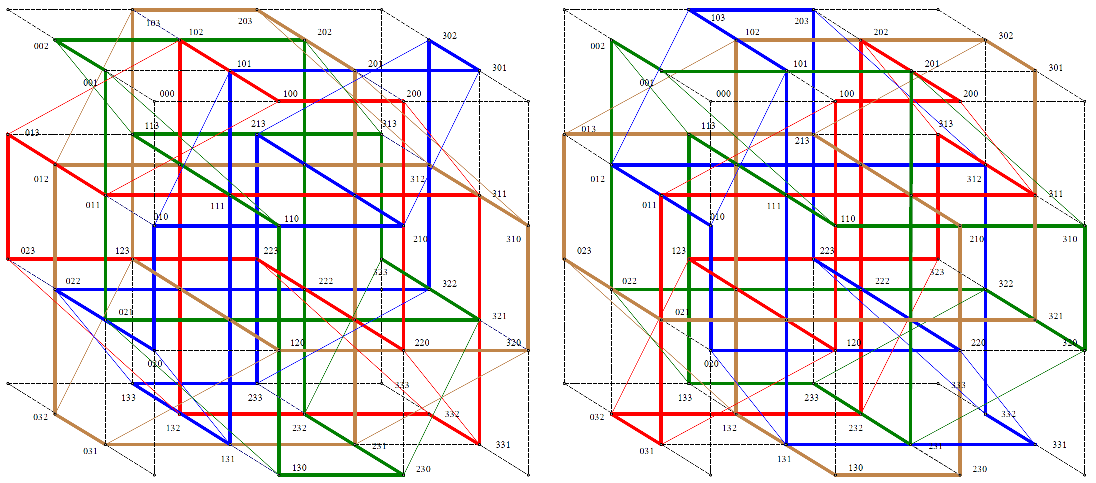}
\caption{Two enantiomorphic quadruples of edge-disjoint PL knots $3_1$ in $[0.3]^3$}
\label{18}
\end{figure}

\begin{figure}[htp]
\includegraphics[scale=0.85]{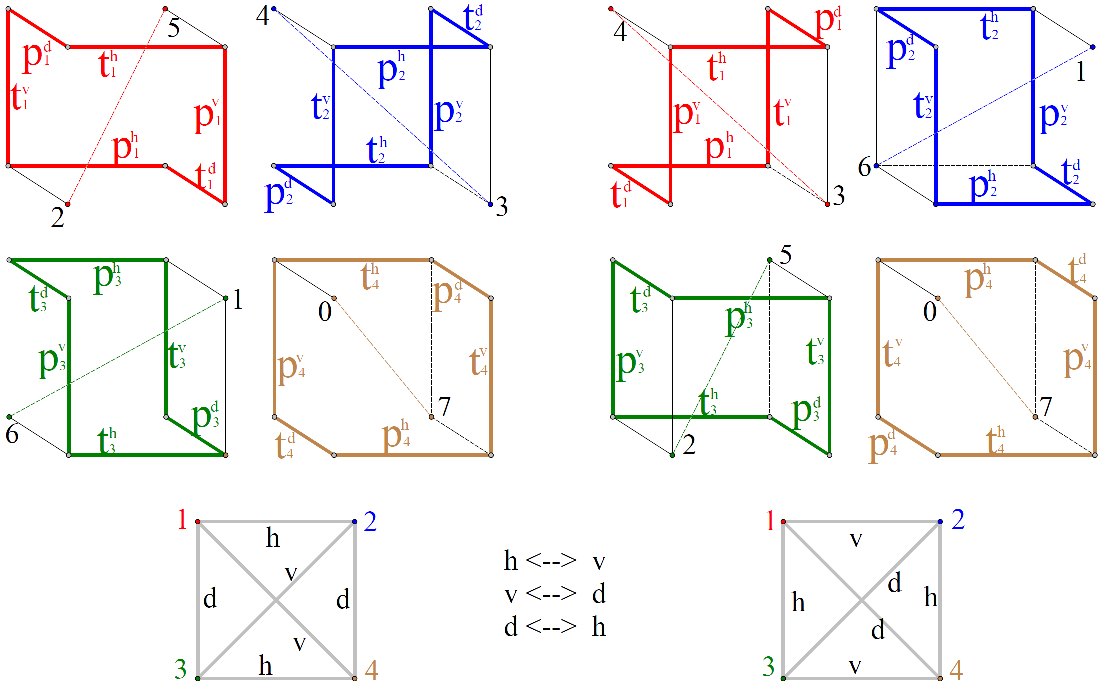}
\caption{Edges of rotation.}
\label{15}
\end{figure}

\begin{figure}[htp]
\includegraphics[scale=1.04]{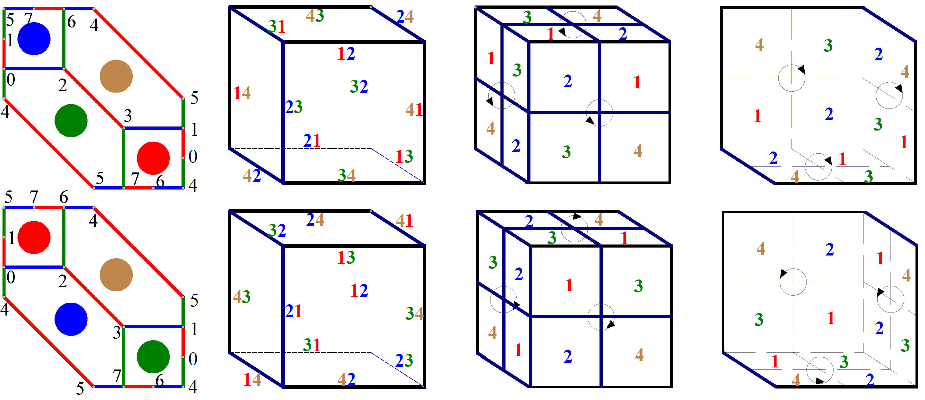}
\caption{Assignment of pairs of colors to the edges of 3-cubes}
\label{16}
\end{figure}

\subsection{Compound of four PL M\"obius strips}\label{sub}

\begin{theorem}\label{thm1} There exists a pair of enantiomorphic piecewise linear M\"obius strip embedded in the hollow cube $[0,3]^3\setminus[1,2]^3\subset\mathbb{R}^3$ with piecewise linear closed curves as their boundary formed by segments parallel to the coordinate directions whose endvertices are points of $\mathbb{Z}^3$.
\end{theorem}

\begin{figure}[htp]
\hspace*{15mm}
\includegraphics[scale=1.5]{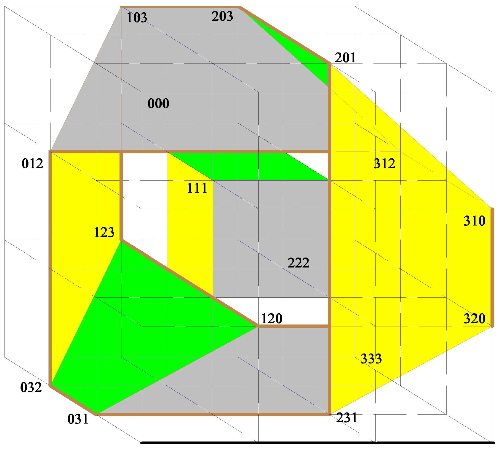}
\caption{PL M\"obius strip whose PL boundary has color 4 = hazel.}
\label{17}
\end{figure} 

\begin{proof}
Fig.~\ref{18} represents two copies of the union $[0,3]^3\subset\mathbb{R}^3$ of 27 unit 3-cubes. In each such copy, Fig~\ref{18} highlights four disjoint {\it PL trefoil knots}, i.e. piecewise linear knots $3_1$  \cite[pp. 51--60]{Rolfsen} in thick trace (in contrast to the remaining edges of the 3-cubes, shown in dashed trace). Each such knot has its composing unit-length edges bearing a common color $i$, where $i=1$ for red, $i=2$ for blue, $i=3$ for green and $i=4$ for hazel. Accordingly, we denote $C_i$ and $C'_i$ for the PL trefoil knots in dark-traced color $i$ in the left and right, respectively, of Fig.~\ref{18}, for $i=1,2,3,4$. Note that $\cup_{i=1}^4 C'_i$ is an enantiomorph of $\cup_{i=1}^4 C_i$ and vice versa. 

Each of the knots $C_i$ or $C'_i$ ($i=1,2,3,4$) is the boundary of a corresponding PL M\"obius band $M_i$ or $M'_i$, respectively, contained in the hollow cube $[0,3]^3\setminus[1,2]^3$ formed by the 26 unit 3-cubes of $[0,3]^3$ other than $[1,2]^3$. In fact, each $C_i$ or $C'_i$ occurs in just 12 of the 26 unit cubes. 
Note that $\cup_{i=1}^4 M'_i$ is an enantiomorph of $\cup_{i=1}^4 M_i$ and vice versa.

Illustrated for $M_i$ where $i=4$, Fig.~\ref{17} exemplifies the fact that each $M_i$ or $M'_i$ is the union of six planar quadrilaterals, namely three (tilted) parallelograms and three (isosceles) trapezoids, in alternating contiguity, for $i=1,2,3,4$. In Fig.~\ref{17}, colors gray, green and yellow highlight the visible parts of the planar faces of $M_4$ and of the central cube $[1,2]^3$. 

Still in Fig.~\ref{18}, the sides of the six planar quadrilaterals of each $M_i$ or $M'_i$ ($i=1,2,3,4$) not in its boundary $C_i$ of $C'_i$ are in color-$i$ thin trace so as to help visualize the parallelograms and trapezoids.

More specifically, the $i$-colored edges of the unit 3-cubes of $[0,3]^3$ (illustrated in Fig.~\ref{17} for $i=4$) form two parallel sides in each of the six quadrilaterals (with the occult parts in Fig.~\ref{17} in dashed trace, though visible on the left of Fig.~\ref{18}). In the case of a trapezoid (resp., parallelogram) the lengths of those sides are 3 units internally and 1 unit externally (resp., 2 units internally and 2 units externally).  

The sides of the hazel PL trefoil knot $C_4$ in Fig.~\ref{17} have their endvertex coordinates detailed in display (\ref{(3)}), with colors of quadrilaterals cited between brackets starting clockwise at the left upper corner in Fig.~\ref{17}, whether they are shown in full or in part, as indicated).
In display (\ref{(3)}), a segment denoted $[a_1a_2a_3,b_1b_2b_3]$ stands for $[(a_1,a_2,a_3),(b_1,b_2,b_3)]$, with $a_i,b_i\in\{0,1,2,3\}$, ($i=1,2,3$); each of the twelve segment brackets is appended with its length as a subindex and an element of $\{h,v,d\}$ as a superindex, where $h,v,$ and $d$ stand for horizontal, vertical and in-depth directions, respectively. The PL boundary curve $C_4$ is the hazel-colored PL trefoil knot in Fig.~\ref{17}. 
\begin{align}\label{(3)}\begin{array}{|cc|c|c|c|c|c|}
&gray(part)&green(part)&yellow(full)&gray(part)&green(full)&yellow(part)\\\hline
&[103,203]_1^h&[203,201]_2^v&[201,231]_3^d&[231,031]_2^h&[031,032]_1^v&[032,012]_2^d\\
&[012,312]_3^h&[312,310]_2^v&[310,320]_1^d&[320,120]_2^h&[120,123]_3^v&[123,103]_2^d\\\hline
\end{array}\end{align}
A similar display can be obtained for the other PL-trefoil knots, shown for $C_2,C_3,C_4$ respectively in displays (\ref{(4)}), (\ref{(5)}), (\ref{(6)}), below. 
\end{proof}

\begin{theorem}\label{thm2} There are four M\"obius strips $M_i$ in $[0,3]^3\setminus[1,2]^3$ ($i=1,2,3,4\}$)
with PL closed curves $C_i$ as their corresponding boundaries; these are formed by segments parallel to the coordinate directions with endpoints in $\mathbb{Z}^3$; in fact, they are PL knots $3_1$ with intersections in $\mathbb{Z}^3$. 
Moreover, the compound $M=\cup_{i=1}^4M_i$ has an enantiomorphic compound $M'=\cup_{i=1}^4M'_i$ in $[0,3]^3\setminus[1,2]^3$, formed by other four M\"obius strips $M'_i$ with similar properties to those of the $M_i$.
\end{theorem}

\begin{proof} To prove the statement, consider again the left of Fig.~\ref{18}, where the hazel $C_4$ is drawn (and summarized in display (\ref{(3)})), as well as the red PL trefoil knot $C_1$ (bounding M\"obius strip $M_1$), formed by the segments 
\begin{align}\label{(4)}\begin{array}{ccccccc}
&[100,200]_1^h&[200,220]_v^2&[220,223]_3^d&[223,023]_2^h&[023,013]_1^v&[013,011]_2^d\\
&[011,311]_3^h&[311,331]_v^2&[331,332]_1^d&[332,331]_2^h&[331,102]_3^v&[102,100]_2^d\\
\end{array}\end{align}
and the blue PL trefoil knot $C_2$ (bounding M\"obius strip $M_2$), formed by the segments
\begin{align}\label{(5)}\begin{array}{ccccccc}
&[133,233]_1^h&[233,213]_2^v&[213,210]_3^d&[210,010]_2^h&[010,020]_1^v&[020,022]_2^d\\
&[022,322]_3^h&[322,302]_2^v&[302,301]_1^d&[301,101]_2^h&[101,131]_3^v&[131,133]_2^d\\
\end{array}\end{align}
and the green PL trefoil knot $C_3$ (bounding M\"obius strip $M_3$), formed by the segments
\begin{align}\label{(6)}\begin{array}{ccccccc}
&[230,130]_1^h&[130,110]_2^v&[110,113]_3^d&[113,313]_2^h&[313,323]_1^v&[323,321]_2^d\\
&[321,021]_3^h&[021,001]_2^v&[001,00 2]_1^d&[002,202]_2^h&[202,232]_3^v&[232,230]_2^d\\
\end{array}\end{align}
The six planar faces of $M_4$ intersect $C_4$ in six corresponding pairs of segments, shown in display (\ref{(3)}) vertically: first $[103,203]_1^h$ above and $[012,312]_3^h$ below, which delimit an isosceles trapezoid with the segments $[103,012]$ and $[203,312]$; then, $[203,201]_2^v$ 
above, and $[312,310]_2^v$ below, which delimit a parallelogram with the segments $[203,312]$ and $[201,310]$, etc.
Similar observations can be made with respect to displays~(\ref{(4)}), (\ref{(5)}) and~(\ref{(6)}).
Similar conclusions can be obtained for $C'_i$ ($i=1,2,3,4$).
\end{proof}

\subsection{Use of double egc girth-4-regular 4-cube}\label{use}

We denote the planar faces in displays~(\ref{(3)}), (\ref{(4)}), (\ref{(5)}) and~(\ref{(6)}) by means of the symbols $t_i^h,p_i^d,t_i^v,p_i^h,t_i^d,p_i^v$, respectively, ($i\in\{1,2,3,4\}=\{$red, blue, green, hazel$\}$), as in the four left  3-cubes in the schematic Fig.~\ref{15}.

In fact, Fig.~\ref{15} contains two sets of four copies of $Q_3$ each, the two sets differentiated by a vertical separator line, the leftmost set having its four copies of $Q_4$ already mentioned in the precious paragraph. 
These two sets of four copies of $Q_4$ correspond bijectively to the two sets of four trefoil knots in the copies of $[0,3]^3$ in Fig.~\ref{18} so that each copy of $Q_3$ bears the monochromatic color of a corresponding trefoil knot. Then,
each such copy of  $Q_3$ is denoted $Q_3^i$ where $i\in\{1,2,3,4\}=\{$red, blue, green, hazel$\}$ is its monochromatic color in Fig.~\ref{15}, that corresponds bijectively to the monochromatic color $i$ of its associated PL trefoil knot in Fig.~\ref{18}. (Recall that Fig.~\ref{18} contains two copies of each $Q_3^i$ with color $i=1$ for red, $i=2$ for blue, $i=3$ for green and $i=4$ for hazel).  

Two 6-cycles $\xi_i$ and $\zeta_i$ are highlighted in thick trace on the left and right representations of $Q_3^i$ in Fig.~\ref{15}, respectively. Each copy of $Q_3^i$ in Fig.~\ref{15} has its 6-cycle $\xi_i$ or $\zeta_i$ with its highlighted edges marked $x_i^j$, where 
\begin{enumerate}
\item $x=t$ for a trapezoid and $x=p$ for a parallelogram (both cases being quadrilaterals that are the faces of PL M\"obius strips as in Fig.~\ref{18}), 
\item $i\in\{1,2,3,4\}$ for associated color and
\item $j\in\{h,v,d\}$ for edge direction ($h$ for horizontal, $v$ for vertical and $d$ for  in-depth). 
\end{enumerate}
Each difference set $Q_3^i\setminus\xi_i$ or $Q_3^i\setminus\zeta_i$ appears in Fig.~\ref{15} as a pair of antipodal vertices in its copy of $Q_3^i$ joined by a color-$i$ dashed axis. 

Below the left and right sets of four copies of $Q_3$ in Fig.~\ref{15} there are two corresponding auxiliary copies of $K_4$ whose vertices 1,2,3 and 4 correspond to, and are colored with, the red, blue, green and hazel of such copies of $Q_3$, and where the edges bear labels $h$ for horizontal, $v$ for vertical and $d$ for in-depth to indicate the directions of the only two shared edges of each pair of $\xi_i$ or $\zeta_i$. Between the two copies of $K_4$ the correspondence of colors between the $\xi_i$ and the $\zeta_i$ is indicated. 

We will establish now two bijections from the sets of four copies of $Q_3$ in Fig.~\ref{15} (representing $M,M'$ in Theorem~\ref{thm1}) onto the 1-factorizations $F_1,F_2$ of $Q_4$ in Subsection~\ref{Q4} illustrated in Fig.~\ref{f3} and Fig.~\ref{14}, with the subgraphs of $Q_4$ induced by the vertex subsets $\{0,1,\ldots,7\}$ and $\{8,,9.a,b,c,d,e,f\}$ referred as {\it inner} and {\it outer}, respectively, copies of $Q_3$ in $Q_4$.   

\begin{theorem}\label{o1} 
There exists a bijection $\eta$ from the edges denoted $t_i^j$  (resp., $p_i^j$) in the 6-cycles $\xi_i$ to the edges with color $i$ and direction $j$ in the inner (resp., outer) copy of $Q_3$ in the copy of $Q_4$ colored according to the 1-factorization $F_1$. 
\end{theorem}

\begin{proof} 
Display (\ref{izq}) demonstrates in detail the claimed bijection $\eta$
where the rows are headed by the symbols $0_t,0_p,3_t,3_p,5_t,5_p,6_t,6_p$ (corresponding to the alternate vertices
0,3,5,6 of $Q_3$ with subindices $x=t$ and $x=p$ for trapezoids and parallelograms, respectively, in Fig.~\ref{17}), followed by three symbols $x_i^j$ associated to the corresponding edges of the 6-cycles of $Q_3$. The rest of each row is headed by the vertices $0,8,3,b,5,d,6,e$, alternatively in the inner and outer 3-cube of the left 4-cube of Fig.~\ref{14}, each such vertex followed by three edges associated bijectively with the three symbols $x_i^j$ to their left, where $(0,8)$, $(3,b)$, $(5,d)$, $(6,e)$ are vertex pairs corresponding to vertices 0,3,5,6 of $Q_3$, respectively, and each edge in the inner and outer 3-cubes of $Q_4$ has its color in $\{1,2,3,4\}$ as a subindex.
\begin{eqnarray}\label{izq}\begin{array}{||c|cccc||c|cccc||}\hline\hline
0_t &t_1^h &t_2^v &t_3^d   &        &0&(0,1)_1&(0,2)_2&(0,4)_3&\\
0_p&p_1^d&p_2^h&p_3^v  &        &8&(8,c)_1&(8,9)_2&(8,a)_3&\\\hline
3_t&t_1^d &t_3^h  &          &t_4^v &3&(3,7)_1&(3,2)_3&           &(3,1)_4\\
3_p&p_1^v&p_3^d&          &p_4^h&b&(b,9)_1&(b,f)_3&            &(b,a)_4\\\hline
5_t&           &t_2^d&t_3^v  &t_4^h&5 &           &(5,1)_2&(5,7)_3&(5,4)_4\\
5_p&          &p_2^v&p_3^h&p_4^d&d&           &(d,f)_2  &(d,e)_3&(d,9)_4 \\\hline 
6_t&t_1^v&t_2^h&&t_4^d&6&(6,4)_1&(6,7)_2&&(6,2)_4\\
6_p&p_1^h&p_2^d&&p_4^v&e&(e,f)_1&(e,a)_2&&(e,c)_4\\\hline\hline
\end{array}\end{eqnarray} 
 \end{proof}
 
 \begin{theorem}\label{o2}
 There exists a bijection $\eta'$ from the edges denoted $t_i^j$  (resp., $p_i^j$) in the 6-cycles $\zeta_i$ to the edges with color $i$ and direction $j$ in the inner (resp., outer) copy of $Q_3$ in the copy of $Q_4$ colored according to the 1-factorization $F_2$.  
\end{theorem}

\begin{proof} Similar to the proof of Theorem~\ref{o1} but replacing $\eta$ by $\eta'$ and display (\ref{izq}) by display (\ref{der}).
\begin{eqnarray}\label{der}\begin{array}{||c|cccc||c|cccc||}\hline\hline
0_t &t_1^h &t_2^v &t_3^d   &        &0&(0,1)_1&(0,2)_2&(0,4)_3&\\
0_p&p_1^v&p_2^d&p_3^h  &        &8&(8,a)_1&(8,c)_2&(8,9)_3&\\\hline
3_t&          &t_2^d &t_3^v  &t_4^h &3&           &(3,7)_2&(3,1)_3&(3,2)_4\\
3_p&         &p_2^h&p_3^d &p_4^v&b&           &(b,a)_2&(b,f)_3&(b,9)_4\\\hline
5_t&t_1^v&t_2^h  &&t_4^d&5  &(5,7)_1&(5,4)_2&&(5,1)_4\\
5_p&p_1^d&p_2^v&&p_4^h&d&(d,9)_1&(d,f)_2&&(d,c)_4 \\\hline 
6_t&t_1^d&&t_3^h&t_4^d&6&(6,2)_1&&(6,7)_3&(6,4)_4\\
6_p&p_1^h&&p_2^d&p_4^v&e&(e,f)_1&&(e,c)_3&(e,a)_4\\\hline\hline
\end{array}\end{eqnarray}
\end{proof}

\begin{remark}
While the bijection in display (\ref{izq}) corresponds to the compound of four M\"obius strips $\cup_{i=1}^4 M_i$ constructed in Subsection~\ref{sub}, the bijection in (\ref{der}) corresponds to its enantiomorphic compound $\cup_{i=1}^4 M'_i$. 
\end{remark}

\subsection{Petrie polygons}\label{sigue}

\begin{remark}
The four 6-cycles $\xi_i$ (resp., $\zeta_i$) on the left (resp., right) of Fig.~\ref{15} are the Petrie polygons (see \cite{self}) of the 3-cube $Q_3$ viewed as a $\{4,3\}$-map, namely the regular map $\{6,3\}_{2,0}$ in \cite[Fig. 5]{self} representing an embedding of the graph $Q_3$ into the torus, which we have slightly modified on the left of Fig.~\ref{16} representing $\cup_{i=1}^4\xi_i$ on top (resp., $\cup_{i=1}^4\zeta_i$ on the bottom), where solid colored circles indicate which Petrie polygons of Fig.~\ref{15} are represented, and vertices are denoted via a single digit $0,1,2,3,4,5,6,7$ standing for vertex $000,001,010,011,100,101,110,111$, respectively, in Fig.~\ref{18} or~\ref{17}. In these representations of $\cup_{i=1}^4\xi_i$ and  $\cup_{i=1}^4\zeta_i$ the edges are colored according to the coordinate directions along which they run: 1 for red, 2 for blue, 3 for green and 4 for hazel.
Observe that $\xi_i$ and $\zeta_i$ share exactly two antipodal edges along direction $i$, for $i=1,2,3$, while $\xi_4$ coincides with $\zeta_4$. 
\end{remark}

In the rest of Fig.~\ref{16}, to the right of the representations of $\cup_{i=1}^4\xi_i$ and  $\cup_{i=1}^4\zeta_i$ 
the two leftmost 3-cubes
have each of their edges assigned a pair of colors in $\{1,2,3,4\}$. The first (resp., second) color is obtained from
the corresponding row of 3-cubes in Fig.~\ref{15} as the color $i$ of the only edge $t_i^j$ (resp., $p_i^j$) in that edge position among the four cases in the row, where color numbers in Italics are employed for $i=4$ (hazel) and in Roman for $i\in\{1,2,3\}$, (to distinguish red, blue and green).
 This allows an assignment from the color set $\{1,2,3,4\}$ onto the set of 24 face quadrants of the 3-cube, shown in the right-center and right of Fig.~\ref{16} in each of the two cases, $\cup_{i=1}^4\xi_i$ and  $\cup_{i=1}^4\zeta_i$. The faces in these 3-cubes are given an orientation each. The color pairs in Fig.~\ref{15} can be recovered from the quadrant colors in Fig.~\ref{16} by reading them along an edge according to the corresponding face orientation.

Let us look at the segmental intersections of trapezoids and parallelograms in $M=M_1\cup M_2\cup M_3\cup M_4$ from displays~(\ref{(3)}), (\ref{(4)}), (\ref{(5)}) and~(\ref{(6)}).

The top-front set
$TF_i^h=([000,300]\times[000,010]\times[000,001])\cap M_i$ of $[0,3]^3$ in Fig.~\ref{18} is trapezoid $t_1^h$ for $i=1$ and parallelogram $p_2^h$ for $i=2$. In the upper leftmost cube in Fig.~\ref{16}, let us call it $Q$, we indicate the corresponding segmental intersection $t_1^h\cap t_2^h$ via the edge-labelling pair 12, with 1 in red and 2 in blue, namely the colors used to represent $C_1$ and $C_2$, respectively. This pair 12 labels the top-front horizontal edge of $Q$, corresponding to the position of  $TF_1^h\cap TF_2^h=t_1^h\cap p_2^h$ in Fig.~\ref{18}.

In all edge-labelling pairs in $Q$ (Fig.~\ref{16}), the first number is associated to a trapezoid and the second one to a parallelogram, each number printed in its associated color.

We subdivide the six faces of $Q$ into four quarters each, and label such quarters with the numbers 1 to 4, setting external counterclockwise (or internal clockwise) orientations to the faces, so that the two  numbers corresponding to an edge of any given face yield, via the defined orientation, the labelling pair as defined above. This is represented in the upper center and upper right 3-cubes in Fig.~\ref{16}.

We can identify the cube $Q$ with the union of the pairwise distinct intersections $M_i\cap M_j$, ($1\leq i<j\leq 4$). This union becomes formed by the segments:

\begin{align}\label{(7)}\begin{array}{cc}
&^{[(0.5,0.5,0.5),(2.5,0.5,0.5)]_1^2\,[(0.5,2.5,0.5),(2.5,2.5,0.5)]_2^4\,[(0.5,0.5,2.5),(2.5,0.5,2.5)]_2^1\,[(0.5,2.5,2.5),(2.5,2.5,2.5)]_4^2}\\
&^{[(0.5,0.5,0.5),(0.5,2.5,0.5)]_2^3\,[(2.5,0.5,0.5),(2.5,2.5,0.5)]_3^4\,[(0.5,0.5,2.5),(0.5,2.5,2.5)]_3^2\,[(2.5,0.5,2.5),(2.5,2.5,2.5)]_4^3}\\
&^{[(0.5,0.5,0.5),(0.5,0.5,2.5)]_3^1\,[(2.5,0.5,0.5),(2.5,0.5,2.5)]_1^4\,[(0.5,2.5,0.5),(0.5,2.5,2.5)]_1^3\,[(2.5,2.5,0.5),(2.5,2.5,2.5)]_4^1}
\end{array}\end{align}

\noindent where each segment has its trapezoid color number as a subindex and its parallelogram color number as a superindex.
The first, second and third lines in~(\ref{(7)}) display those segments of $Q$ parallel to the first, second and third coordinates, respectively.

\subsection{Automorphism group of the compound}\label{au}

\begin{theorem}\label{antes}
The automorphism group $G=Aut(M)$ of
$M=\bigcup_{i=1}^4 M_i$ is isomorphic to the group of 24 rotations of the 3-cube, that is the symmetry group $Sym_4$ of four elements. Moreover, the 24 reflections of the cube takes $M$ bijectively into an enantiomorph of $M$ also obtainable from $M'$ by rotations.  
\end{theorem}

\begin{proof} 
The symmetries of $M$ coincide with the 24 rotations of the 3-cube $[1,2]^3$, namely the identity plus 9 rotations around the lines joining the centers of opposite faces plus 8 rotations around the lines joining opposite vertices plus 6 rotations around the lines joining the centers of opposite edge. On the other hand. the 24 reflections of $M$ through the center $(1.5,1.5,1.5)$ of $[1,2]^3$ takes $M$ into na enantiomorph of $M$ isomorphic to $M'$.  
\end{proof}

\begin{figure}[htp]
\hspace*{8mm}
\includegraphics[scale=0.7]{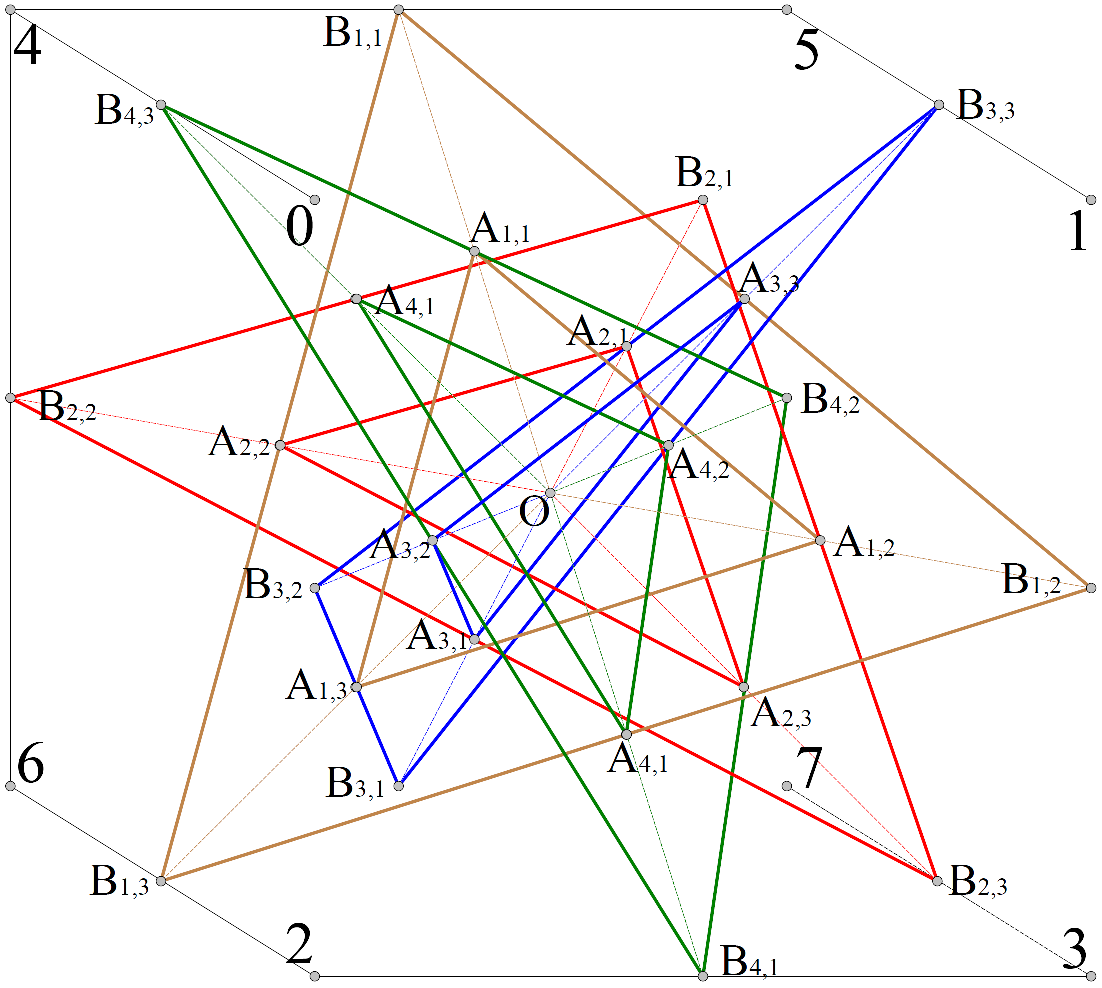}
\caption{Odom-Coxeter polylink.}
\label{19}
\end{figure}

\subsection{Polylink of four hollow triangles}\label{poly}

A sculpture by G. P. Odom Jr., see Fig.~\ref{19}, was analyzed by H. S. M. Coxeter \cite{Coxeter}, for its geometric and symmetric properties.
According to \cite[~p. 270]{Doris}, Odom and Coxeter were unaware of the earlier discovery \cite{Holden} of this  by A. Holden, who called it a {\it regular polylink  of four locked hollow triangles} \cite{Holden2}.

We relate the top M\"obius-strip compound above to this structure, noting that
the centers of the maximum linear parts of the PL trefoil knots $C_i$ ($i=1,2,3,4$) are the vertices of four corresponding equilateral triangles, namely (in the order of the triangle colors):
\begin{align}\label{10}\begin{array}{cccc|cccc}
1&	(1.5,0,3),&(3,1.5,0),&(0,3,1.5);&
2:& 	(1.5,0,0),&(0,1.5,3),&(3,3,1.5);\\
3:& 	(1.5,3,3),&(0,1.5,0),&(3,0,1.5);&
4:& 	(1.5,3,0),&(3,1.5,3),&(0,0,1.5).
\end{array}\end{align}

\noindent Each of these triangles $T_i$ ($i=1,2,3,4$), gives place to a hollow triangle (i.e., a planar region bounded by two homothetic and concentric equilateral triangles \cite{Coxeter}) by removing from $T_i$ the equilateral triangle $T'_i$ whose vertices are the midpoints of the segments between the vertices of $T_i$ (display~(\ref{10})) and $O=(1.5,1.5,1.5)$. Characterized by colors, these midpoints are

\begin{align}\label{11}\begin{array}{cccc}
1:&	(1.50,0.75,2.25),&(2,25,1.50,0.75),&(0.75,2.25,1.50);\\
2:&	(1.50,0.75,0.75),&(0.75,1.50,2.25),&(2.25,2.25,1.50);\\
3:&	(1.50,2.25,2.25),&(0.75,1.50,0.75),&(2.25,0.75,1.50);\\
4:&	(1.50,2.25,0.75),&(2.25,1.50,2.25),&(0.75,0.75,1.50).
\end{array}\end{align}

The centers in display~(\ref{10}) are the vertices of an Archimedean cubocta\-he\-dron. Consider the midpoints of the sides of the triangles $T_i$, namely:

\begin{align}\label{12}\begin{array}{cccc}
1:&(2.25,0.75,1.50),&(1.50,2.25,0.75),&(0.75,1.50,2.25);\\
2:&(0.75,0.75,1.50),&(1.50,2.25,2.25),&(2.25,1.50,0.75);\\
3:&(0.75,2.25,1.50),&(1.50,0.75,0.75),&(2.25,1.50,2.25);\\
4:&(2.25,2.25,1.50),&(1.50,0.75,2.25);&(0.75,1.50,0.75).
\end{array}\end{align}

By expressing the 3-tuples in (\ref{11}) via a $4\times 3$-matrix $\{A_{i,j};i=1,2,3;j=1,2,3,4\}$, we have the following correspondence from (\ref{11}) to (\ref{12}), in terms of the notation in Fig.~\ref{19}:

\begin{align}\label{(13)}
\begin{pmatrix}
A_{1,1}&A_{1,2}&A_{1,3}\\
A_{2,1}&A_{2,2}&A_{2,3}\\
A_{3,1}&A_{3,2}&A_{3,3}\\
A_{4,1}&A_{4,2}&A_{4,3}
\end{pmatrix}
\rightarrow
\begin{pmatrix}
B_{1,12}&B_{1,23}&B_{1,31}\\
B_{2,12}&B_{2,23}&B_{2,31}\\
B_{3,12}&B_{3,23}&B_{3,31}\\
B_{4,12}&B_{4,23}&B_{4,31}
\end{pmatrix}
=
\begin{pmatrix}
A_{3,3}&A_{4,1}&A_{2,2}\\
A_{4,3}&A_{3,1}&A_{1,2}\\
A_{1,3}&A_{2,1}&A_{4,2}\\
A_{2,3}&A_{1,1}&A_{3,2}
\end{pmatrix}
\end{align}
meaning
each midpoint $B_{i,jk}$ of a $T_i$ between $B_{i,j}$ and $B_{i,k}$ equals a vertex $A_{i',j'}$ of some $T'_{i'}$, and vice-versa, as in Fig.~\ref{19}.
Take each $T_i$ as the 6-cycle of its vertices and side midpoints:

\begin{align}\label{(14)}\begin{array}{ccccccc}
T_1=&(B_{1,1}&A_{3,3}&B_{1,2}&A_{4,1}&B_{1,3}&A_{2,2});\\
T_2=&(B_{2,1}&A_{1,2}&B_{2,3}&A_{3,1}&B_{2,2}&A_{4,1});\\
T_3=&(B_{3,1}&A_{1,3}&B_{3,2}&A_{2,1}&B_{3,3}&A_{4,2});\\
T_4=&(B_{4,1}&A_{1,1}&B_{4,2}&A_{2,3}&B_{4,1}&A_{3,2}).
\end{array}\end{align}

\begin{corollary} The automorphism group of the union $\bigcup_{i=1}^4T_i$ is $G=Aut(\bigcup_{i=1}^4M_i)=Aut(\bigcup_{i=1}^4T_i)$. In addition, $G_i=Aut(M_i)=Aut(T_i)$, for each $i=1,2,3,4$, is isomorphic to the dihedral group of six elements. Moreover, there are only two polylinks of four hollow triangles in $[0,3]^3$, including $\bigcup_{i=1}^4T_i$, and these two polylinks are enantiomorphic.\end{corollary}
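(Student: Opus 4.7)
My plan is to leverage the canonical bijection between the M\"obius-strip compound $\bigcup_{i=1}^{4} M_i$ and the triangle polylink $\bigcup_{i=1}^{4} T_i$ that is already implicit in displays~\ref{10}--(\ref{(14)}), and then to transport the symmetry analysis of Theorem~\ref{antes} across this bijection.

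First, I would argue the isometric equivalence $\Aut(\bigcup M_i) = \Aut(\bigcup T_i) = G$. Each $T_i$ is obtained from $M_i$ by an isometry-invariant recipe: its outer vertices are the centers of the three maximum collinear segments of the boundary trefoil $C_i = \partial M_i$ (the data collected in display~\ref{10}), while its inner equilateral boundary is fixed by the canonical center $O=(1.5,1.5,1.5)$ via display~\ref{11}. Hence every isometry preserving $\bigcup M_i$ automatically preserves $\bigcup T_i$. For the reverse inclusion I must show that $\bigcup T_i$ already rigidifies the whole M\"obius-strip flesh: each pair of triangles $T_i,T_{i'}$ meets in the two-point incidences of display~(\ref{(13)}), and the six (trapezoid, parallelogram) pairs forming $M_i$ are precisely the quadrilaterals obtained by joining consecutive sides of $T_i$ to sides of its three partners in accordance with those incidences. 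This reconstructs $M_i$ uniquely from $\bigcup T_i$ and yields the reverse inclusion.

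Next, for the identification $\Aut(M_i) = \Aut(T_i) \cong D_3$ at each $i$, I would start from the obvious fact that an equilateral triangle embedded in $\mathbb{R}^3$ has isometry group $D_3$, generated by the $120^\circ$ rotation about the perpendicular axis through its centroid and by any $180^\circ$ rotation about one of the three in-plane perpendicular-bisector lines (these appear as the color-$i$ dashed axes of Fig.~\ref{15}). I would then verify that each of these six rotations also preserves the attached $M_i$: inspecting display~(\ref{(3)}) for $i=4$ (and its analogues from (\ref{(4)})--(\ref{(6)})), the threefold rotation cyclically permutes the three trapezoid-parallelogram pairs making up $M_i$, while each twofold rotation swaps two such pairs and fixes the third. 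Conversely, no isometry of $\mathbb{R}^3$ preserving $T_i$ lies outside $D_3$, so $\Aut(M_i) = \Aut(T_i) \cong D_3$.

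Finally, for the enumeration and enantiomorphism of polylinks, I would invoke Theorem~\ref{antes} directly: that theorem exhibits exactly two 4-PL-M\"obius-strip compounds in $[0,3]^3 \setminus [1,2]^3$, interchanged by any orientation-reversing isometry (for instance a coordinate reflection, or $F_O$ composed with a $90^\circ$ rotation). Applying the equivariant bijection established in the first step, these two compounds yield exactly two polylinks of four hollow triangles in $[0,3]^3$, namely $\bigcup T_i$ and its mirror image, and the orientation-reversing map between the M\"obius compounds descends to an enantiomorphism between the polylinks. The main obstacle I expect is the rigidity step in the first paragraph: I must rule out ``accidental'' isometries of the bare 1-skeleton $\bigcup T_i$ that do not extend to symmetries of $\bigcup M_i$. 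This will follow by showing that the signed linking data among the four triangles — equivalently, the precise pattern of over/under crossings forced by the incidences $B_{i,jk} = A_{i',j'}$ of display~(\ref{(13)}) — is already detectable from the union of the twelve triangle sides, which is the standard rigidity property of Holden's polylink.
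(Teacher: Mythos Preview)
Your approach is essentially the same as the paper's: both arguments lean on Theorem~\ref{antes} and the isometry-equivariant passage from each $M_i$ to its associated $T_i$ via displays~(\ref{10})--(\ref{(14)}). The paper's proof differs only in presentation, choosing to exhibit the $\pm 120^\circ$ rotations about the four body diagonals explicitly as vertex permutations of $[0,3]^3$ (your threefold generators of each $G_i$) rather than arguing the equivariant reconstruction abstractly; everything else is referred back to Theorem~\ref{antes}, just as you do.

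One point to tighten: your sentence ``no isometry of $\mathbb{R}^3$ preserving $T_i$ lies outside $D_3$'' is false as stated, since the bare hollow equilateral triangle also admits the reflection in its own plane, giving $D_3\times\mathbb{Z}_2$. What makes $\Aut(T_i)=D_3$ is precisely that this planar reflection is orientation-reversing and therefore sends the polylink $\bigcup_j T_j$ (equivalently $\bigcup_j M_j$) to its enantiomorph rather than to itself; you should phrase the converse as ``no isometry preserving $M_i$'' or ``no isometry in $\Aut(\bigcup_j T_j)$ stabilizing $T_i$'' lies outside $D_3$. This is exactly the chirality content you already invoke from Theorem~\ref{antes} in your last paragraph, so the fix is immediate.
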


\begin{proof} By expressing, as in Fig.~\ref{19}, the vertices of $[0,3]^3$ by:
\begin{align}\label{(15)}\begin{array}{cccccccc}
\mathbf{0}=000,&\mathbf{1}=300,&\mathbf{2}=030,&\mathbf{3}=330,\mathbf{4}=003,&\mathbf{5}=303,&\mathbf{6}=033,&\mathbf{7}=033,
\end{array}\end{align}
we notice that the $\pm120^\circ$ angle rotations of $[0,3]^3$ around the axis line determined by the two points of color $i$ in $Q$, as indicated in Fig.~\ref{15},
 correspond respectively to the permutations:
\begin{align}\label{(16)}\begin{array}{clll}
 Color\;1:&R_1=(\mathbf{124})(\mathbf{365})&\mbox{ and }&R_1^{-1}=(\mathbf{142})(\mathbf{563});\\
 Color\;2:&R_2=(\mathbf{036})(\mathbf{174})&\mbox{ and }&R_2^{-1}=(\mathbf{063})(\mathbf{147});\\
 Color\;3:&R_3=(\mathbf{065})(\mathbf{271)}&\mbox{ and }&R_3^{-1}=(\mathbf{056})(\mathbf{217});\\
 Color\:4:&R_4=(\mathbf{247})(\mathbf{053})&\mbox{ and }&R_4^{-1}=(\mathbf{274})(\mathbf{035}),
 \end{array}\end{align}
 where color 1 is red, color 2 is blue, color 3 is green and color 4 is hazel; notice also that the axes in $Q$ corresponding to these colors are:
 $$\begin{array}{ccc}&[(0.5,0.5,0.5),(2.5,2.5,2.5)],& [(0.5,2.5,0.5),(2.5,0.5,2.5)],\\
& [(0.5,0.5,2.5),(2.5,2.5,0.5)],& [(2.5,0.5,0.5),(0.5,2.5,2.5)].\end{array}$$
The rest of the statement arises from Theorem~\ref{antes}. The fact that there are two polylinks of four hollow triangles in $[0,3]^3$ that are enantiomorphic is inherited from Theorem~\ref{antes}.
\end{proof}


\begin{thebibliography}{99}

\bibitem{s1} S. Ali, M. K. Jamil, M. Azeem, M. A. Zahid, T. A. Ismail, {\it Double resolving sets and the exchange property with applications in network optimization and cybersecurity}, Materials Chemistry and Physics, (2025) DOI: 10.1016/j.matchemphys.2025.131289

\bibitem{s2} M. Azeem, {\it Cycle-super magic labeling of polyomino linear and zig-zag chains}, Journal of Operations Intelligence, {\bf 1.1}.(2023): 67--81. https://doi.org/10.31181/jopi1120235.

\bibitem{Armanios} C. Armanios, {\it A new 5--valent distance transitive graph}, Ars Combin. {\bf 19A} (1985), 77--85.

\bibitem{BNC} A. Brouwer, A, Neumaier,  A. Cohen, Distance-Regular Graphs, Springer-Verlag, 1998.

\bibitem{CD}
C.  J. Colbourn,  J. H. Dinitz, Handbook of Combinatorial Designs (2nd ed.), Boca Raton: Chapman \& Hall/ CRC, (2007).

\bibitem{CS} J. H. Conway,  N. J. A. Sloane, Sphere Packings, Lattices and Groups, (3rd ed.), Berlin, New York: Springer-Verlag, (1999).

\bibitem{Pal} S. L. R. Costa, R. Muniz, E. Agustini, R. Palazzo, {\it Graphs, Tessellations and Perfect Codes in Flat Tori}, IEEE Transactions on Information Theory, {\bf 50-10}(2004), 2363-2377.


\bibitem{self} H. S. M. Coxeter, {\it Self-dual configurations and regular graphs}, Bull. Amer. Math. Soc., {\bf 56(5)} (1950), 413--455.

\bibitem{Coxeter} H. S. M. Coxeter, {\it Symmetrical Combinations of Three or Four Hollow Triangles}, The Mathematical Intelligencer, {\bf 16}(3) (1994), 25--30.

\bibitem{Thomsen} H. S. M. Coxeter,  {\it Self-Dual Configurations and Regular Graphs}, Bull. Amer. Math. Soc. {\bf 56} (1950), 413--455.

\bibitem{Crom} P. R. Cromwell, Polyhedra, Cambridge Univ. Press, Cambridge UK (1997).

\bibitem{DS} I. J. Dejter,  O. Serra, {\it Efficient dominating sets in Cayley graphs},
Discrete Applied Mathematics {\bf 129}(2--3) (2002), 319--328.

\bibitem{Eiben} E. Eiben, R. Jajcay, P. ’\v{S}parl, {\it Symmetry properties of generalized graph truncations}, J. Combin. Theory, Ser. B {\bf 137} (2019) 29--131.

\bibitem{Fuchs} D. Fuchs,  S. Tabachnikov, Mathematical Omnibus: Thirty Lectures on Classic Mathematics, 2007, 199--206, at
  http://www.math.psu.edu/tabachni/Books/taba.pdf

\bibitem{s3} G. Haidar, S. Ali, W. Khan, {\it Fault-tolerant topological indices of graph joins with applications in multi-swarm drone networks}, Computer Networks, {\bf 270}(2025) DOI: 10.1016/j.comnet.2025.111517

\bibitem{HV}  D. Hilbert,  S. Cohn-Vossen, Geometry and the Imagination (2nd ed.), Chelsea, 1952.

\bibitem{Holden} A. Holden, Shapes, Spaces and Symmetry, Columbia Univ. Press, 1971.

\bibitem{Holden2} A. Holden, {\it Regular Polylinks}, Structural Topology, {\bf 4} (1980),  41--45.

\bibitem{Imrich} W. Imrich, S. Klav\v{z}ar,  D. F. Rall, Graphs and their cartesian products, A. K. Peters, (2008).

\bibitem{s4} A. Khan, S. Ali, S. Hayat, M. Azeem, Y. Zhong, M. A. Zahid, M. J.F. Alenazi, {\it Fault-tolerance and unique identification of vertices and edges in a graph: The fault-tolerant mixed metric dimension}, Journal of Parallel and Distributed Computing, {\bf 197} (2025), https://doi.org/10.1016/j.jpdc.2024.105024.

\bibitem{census} P. Poto\v{c}nik, P. Spiga, G. Verret, {\it A census of cubic vertex-transitive graphs}

\url{https://staff.matapp.unimib.it/~spiga/census.html}.

\bibitem{PV} P. Poto\v{c}nik,  J. Vidali, {\it Girth-regular graphs}, Ars Mathematica Contemporanea, {\bf 17} (2019), 349--368.

\bibitem{PW} P. Poto\v{c}nik,  S. Wilson, {\it Tetravalent edge-transitive graphs of girth at most 4}, Jour. Combin. Theory, Ser. B, {\bf 97} (2007), 217--236.

\bibitem{ARS} P. Poto\v{c}nik,  S. Wilson, {\it Linking-ring structures and tetravalent semisymmetric graphs}, Ars. Math. Contemporanea, {\bf 7} (2014), 341--352.

\bibitem{Rolfsen} D. Rolfsen, Knots and Links, Publish or Perish  Press, Wilmington DE, 1976

 \bibitem{Rourke} C. P Rourke,  B. J. Sanderson,
 Introduction to Piecewise-Linear Topology, Springer, New York, NY, 1972.

\bibitem{Doris} D. Schattschneider,
{\it Coxeter and the Artists:  2-Way Inspiration}, in The Coxeter Legacy, Reflections and Projections, (ed. C. Davis et al.), Amer. Math. Soc., 2006.

\bibitem{s5} Y. Tu, S. Ali, M. Azeem, M. Arshad, G. Haidar, H. K. Thabet, {\it Optimizing emergency response services in urban areas through the fault-tolerant metric dimension of hexagonal nanosheet}, Scientific Reports, {\bf 15(1)} (2025) DOI:10.1038/s41598-025-16684-0.

\bibitem{WDW} W. D. Wallis, One-Factorizations (Math. and its Appl.), Springer, NY, 1997.

\bibitem{umkb} S. Wilson, {\it Uniform maps on the Klein bottle}, Jour. for Geom. and Graphics, {\bf 10}(2) (2006), 161--171.

\bibitem{WP}  S. Wilson,  P. Poto\v{c}nik, {\it Recipes for edge-transitive tetravalent graphs}, Art Discrete Appl. Math., {\bf 3} (2020), 33 pp.

\bibitem{s6} S. N. A. Zamri, S. Ali, M. Azeem, H. A. Neamah, B. Almohsen, {\it The Mixed Partition Dimension: A New Resolvability Parameter in Graph Theory}. IEEE Access (2025). DOI: 10.1109/ACCESS.2025.3534819.

\end{thebibliography}
\end{document}